\newcommand\overstar[1]{\ThisStyle{\ensurestackMath{%
			\setbox0=\hbox{$\SavedStyle#1$}%
			\stackengine{0pt}{\copy0}{\kern.2\ht0\smash{\SavedStyle*}}{O}{c}{F}{T}{S}}}}
\newtheorem{algorithm}[theorem]{Algorithm}
\newcommand{\be}{\begin{equation}}
\newcommand{\ee}{\end{equation}}
\newcommand{\bea}{\begin{eqnarray}}
\newcommand{\eea}{\end{eqnarray}}
\newcommand{\beas}{\begin{eqnarray*}}
\newcommand{\eeas}{\end{eqnarray*}}
\newcommand{\vertiii}[1]{{\left\vert\kern-0.25ex\left\vert\kern-0.25ex\left\vert #1 
    \right\vert\kern-0.25ex\right\vert\kern-0.25ex\right\vert}}
\newcommand{\ub}{\boldsymbol{u}_j}
\newcommand{\pb}{{p}_j}
\newcommand{\Bb}{\boldsymbol{B}_j}
\newcommand{\lamb}{{\lambda}_j}
\newcommand{\fb}{\boldsymbol{f}_j}
\newcommand{\curlGb}{\nabla \times \boldsymbol{g}_j}
\newcommand{\nuPert}{{\nu}_j}
\newcommand{\gamPert}{{\gamma}_j}
\newcommand{\fn}[1]{\boldsymbol{f}^{#1}_{j,h}}
\newcommand{\fe}[1]{\boldsymbol{f}^{#1}_j}
\newcommand{\fbe}[1]{{\boldsymbol{f}^{#1}_b}_j}
\newcommand{\gn}[1]{\nabla\times\boldsymbol{g}^{#1}_{j,h}}
\newcommand{\geNew}[1]{\nabla\times\boldsymbol{g}^{#1}_j}
\newcommand{\gbe}[1]{\nabla\times{\boldsymbol{g}^{#1}_b}_j}
\newcommand{\un}[1]{\boldsymbol{u}^{#1}_{j,h}}
\newcommand{\ue}[1]{\boldsymbol{u}^{#1}_j}
\newcommand{\uhe}[1]{\hat{\boldsymbol{u}}^{#1}_j}
\newcommand{\uue}[1]{\breve{\boldsymbol{u}}^{#1}_j}
\newcommand{\bn}[1]{\boldsymbol{B}^{#1}_{j,h}}
\newcommand{\Be}[1]{\boldsymbol{B}^{#1}_j}
\newcommand{\bhe}[1]{\hat{\boldsymbol{B}}^{#1}_j}
\newcommand{\bue}[1]{\breve{\boldsymbol{B}}^{#1}_j}
\newcommand{\pn}[1]{p^{#1}_{j,h}}
\newcommand{\pe}[1]{p^{#1}_j}
\newcommand{\phe}[1]{\hat{p}^{#1}_j}
\newcommand{\pue}[1]{\breve{p}^{#1}_j}
\newcommand{\lamn}[1]{\lambda^{#1}_{j,h}}
\newcommand{\lame}[1]{\lambda^{#1}_j}
\newcommand{\lamhe}[1]{\hat{\lambda}^{#1}_j}
\newcommand{\lamue}[1]{\breve{\lambda}^{#1}_j}
\newcommand{\bs}[3]{b^{*}(#1,#2,#3)}
\newcommand{\norm}[1]{\lVert#1\rVert}
\newcommand{\normiii}[1]{{\left\vert\kern-0.25ex\left\vert\kern-0.25ex\left\vert #1 
		\right\vert\kern-0.25ex\right\vert\kern-0.25ex\right\vert}}
\newcommand{\vt}{\tilde{\boldsymbol{v}}^{n+1/2}}
\newcommand{\vh}[1]{\boldsymbol{v}^{#1}}
\newcommand{\vhbar}[1]{\bar{\boldsymbol{v}}^{#1}_j}
\newcommand{\vhe}[1]{\hat{\boldsymbol{v}}^{#1}_j}
\newcommand{\vue}[1]{\breve{\boldsymbol{v}}^{#1}_j}
\newcommand{\vhbdf}[1]{\overstar{\boldsymbol{v}}^{#1}}
\newcommand{\vtt}{{\tilde{\boldsymbol{v}}}^{n+1}}
\newcommand{\vhbarbdf}[1]{\bar{\boldsymbol{v}}^{#1}_j}
\newcommand{\ut}{\tilde{\boldsymbol{u}}^{n+1/2}_{j,h}}
\newcommand{\bt}{\tilde{\boldsymbol{B}}^{n+1/2}_{j,h}}
\newcommand{\ute}{\tilde{\boldsymbol{u}}^{n+1/2}_{j}}
\newcommand{\bte}{\tilde{\boldsymbol{B}}^{n+1/2}_{j}}
\newcommand{\qun}[1]{q^{#1}_{j,h}}
\newcommand{\qu}[1]{q_j^{#1}}
\newcommand{\ru}[1]{R^{#1}_{j}}
\newcommand{\run}[1]{R^{#1}_{j,h}}
\newcommand{\bbar}[1]{\bar{\boldsymbol{B}}^{#1}_{j}}
\newcommand{\bbn}[1]{\bar{\boldsymbol{B}}^{#1}_{j,h}}
\newcommand{\ubar}[1]{\bar{\boldsymbol{u}}^{#1}_{j}}
\newcommand{\ubn}[1]{\bar{\boldsymbol{u}}^{#1}_{j,h}}
\newcommand{\rubdf}[1]{\overstar{R}^{#1}_{j}}
\newcommand{\runbdf}[1]{\overstar{R}^{#1}_{j,h}}
\newcommand{\utt}{{\tilde{\boldsymbol{u}}}^{n+1}_{j,h}}
\newcommand{\btt}{{\tilde{\boldsymbol{B}}}^{n+1}_{j,h}}
\newcommand{\utte}{{\tilde{\boldsymbol{u}}}^{n+1}_{j}}
\newcommand{\btte}{{\tilde{\boldsymbol{B}}}^{n+1}_{j}}
\newcommand{\nuM}{\bar{\nu}^n}
\newcommand{\nuP}{{\nu^{\prime n}_j}}
\newcommand{\nuPMax}{{\nu^{\prime}_{\max}}}
\newcommand{\gamm}{{\bar{\gamma}}^n}
\newcommand{\gamP}{{{\gamma}^{\prime n}_j}}
\newcommand{\gamPMax}{{{\gamma}^{\prime}_{\max}}}
\newcommand{\br}{\nonumber\\}
\begin{document}
\title{Second order, unconditionally stable, linear ensemble algorithms for the magnetohydrodynamics equations}

\titlerunning{Ensemble methods for MHD equations }

\author{
John Carter, Daozhi Han and Nan Jiang
}

\institute{J. Carter \at
	Department of Mathematics and Statistics, Missouri University of Science and Technology, Rolla, MO 65409, USA.    
	\email{jachdm@mst.edu}             \\
	D. Han \at
	Department of Mathematics and Statistics, Missouri University of Science and Technology, Rolla, MO 65409, USA.    
	\email{handaoz@mst.edu}             \\
	N. Jiang \at
	Department of Mathematics, University of Florida, Gainesville, FL.\\
	\email{ jiangn@ufl.edu} 
}

\maketitle

\begin{abstract}
We propose two unconditionally stable, linear ensemble algorithms with pre-computable shared coefficient matrices across different realizations for the magnetohydrodynamics equations.  The viscous terms are treated by a standard perturbative discretization.  The nonlinear terms are discretized fully explicitly within the framework of the generalized positive auxiliary variable approach (GPAV). Artificial viscosity stabilization that modifies the kinetic energy is introduced to  improve  accuracy of the GPAV ensemble methods.  Numerical results are presented to demonstrate the accuracy and robustness of the ensemble algorithms.

\keywords{MHD \and SAV \and  uncertainty quantification \and ensemble algorithm \and unconditional stability}
\subclass{ 65M12 \and 65M60 \and 76T99 }
\end{abstract}

\section{Introduction}
Magnetohydrodynamics (MHD) flow  describes electrically conducting fluid moving through a magnetic field. It has important applications in fusion technology, submarine propulsion system, liquid metals in magnetic pumps, and so on. The mathematical model comprises the Navier-Stokes equations for fluid flow and Maxwell's equations for electromagnetics.  In practical applications, the problem parameters such as viscosity and magnetic resistivity, external body forcing and initial conditions, are invariably subject to uncertainty. To quantify the impact of uncertainty and develop high-fidelity numerical simulations, one usually computes the flow ensembles in which the MHD equations are solved repeatedly with different inputs.  The aim of this article is to develop efficient second-order accurate ensemble algorithms that are unconditionally stable and suitable for long-time simulations.
Therefore we consider solving $J$ times the following MHD equations: 
 for $j=1,2,...,J$,
\begin{equation}\label{eq:MHD}
	\left\{\begin{aligned}
	&\boldsymbol{u}_{j,t}+\boldsymbol{u}_j\cdot\nabla \boldsymbol{u}_j  -s\boldsymbol{B}_j\cdot \nabla \boldsymbol{B}_j - \nu_j \Delta \boldsymbol{u}_j + \nabla p_j  
	=\boldsymbol{f}_j \text{ in } \Omega\times (0,T),\\
	&
	\nabla \cdot \boldsymbol{u}_j = 0, \text{ in } \Omega\times (0,T),\\
	&
	\boldsymbol{B}_{j,t} + \boldsymbol{u}_j\cdot \nabla \boldsymbol{B}_j - \boldsymbol{B}_j\cdot \nabla \boldsymbol{u}_j - \gamma_j \Delta \boldsymbol{B}_j + \nabla \lambda_j
	=
	\nabla \times \boldsymbol{g}_j \text{ in } \Omega\times (0,T),\\
	&
	\nabla \cdot \boldsymbol{B}_j = 0, \text{ in } \Omega\times (0,T),\\
	&
	\boldsymbol{u}_j(x,0)   =\boldsymbol{u}_j^{0}(x),   \text{ in } \Omega,\quad \boldsymbol{B}_j(x,0)    = \boldsymbol{B}_j^{0}(x), \text{ in } \Omega.
	\end{aligned}\right.
\end{equation}
Here $\boldsymbol{u}_j$ is the fluid velocity, $p_j$ the pressure, $\boldsymbol{B}_j$ the magnetic field and $\lambda_j$ is a Lagrange multiplier corresponding to the solenoidal constraint on $\boldsymbol{B}_j$ \cite{MoRe2017}. The body force $\boldsymbol{f}_j(x,t)$  and $\nabla \times \boldsymbol{g}_j$ are given,  $s$ is the coupling number, $\nu_j$ is the kinematic viscosity, and $\gamma_j$ is the magnetic resistivity.  Dirichlet boundary conditions will be imposed for both $\boldsymbol{u}_j$ and $\boldsymbol{B}_j$, though the numerical methods are also applicable  to other boundary conditions including  $\nabla \times \boldsymbol{B}_j=0$ on $\partial \Omega$. Note that we have adopted an equivalent  formulation  of the MHD equations, cf. \cite{WeZh2017, Trenchea2014, MoRe2017, MWRM2021}.

Ensemble methods have been extensively developed for solving the Navier-Stokes equations and related fluid models \cite{JiLa2014, JiLa2015, Jiang2015, Jiang2017, GJS2017,  Fiordilino2018, Jiang2019, GJW2019,  JiQi2019, JLY2021}.  The central idea in these ensemble methods is a perturbative time discretization that utilizes the ensemble mean corrected by explicit treatment of the fluctuations in time marching of each realization. 
As  a result, at each time step the coefficient matrix  of the resulting linear systems is identical for all realizations, saving both storage and computational cost.  Moreover, under some constraint on the time-step and the size of fluctuations it is shown that the ensemble algorithms are long-time stable.  A similar ensemble method is developed in \cite{JiSc2018} and \cite{CaNa2021} for solving a reduced MHD system at low magnetic Reynolds number.
Based on the Elsasser formulation \cite{Elsasser1950} and the perturbative time discretization, a first-order decoupled and unconditionally stable ensemble algorithm is proposed and analyzed in \cite{MoRe2017, MWRM2021} for solving the full MHD model.  
An artificial eddy viscosity term is employed to ensure unconditional stability.  Due to the usage of Elsasser variables, the method appears to be limited to the case of Dirichlet boundary conditions.

Further computational efficiency gains can be achieved by fully explicit discretization of the nonlinear terms so that the exact same coefficient matrix is shared across different time steps in ensemble simulations.  This approach would often incur a CFL condition that hinders the efficiency of the algorithm for long-time simulation or for problems involving multiple scales.  One remedy  is the introduction of a Lagrange multiplier for enforcement of the underlying energy estimate (energy dissipation or conservation). This idea leads to recent development of the so-called   Invariant Energy Quadratization (IEQ) method  \cite{GuTi2013, YZW2017, YaJu2017, GZW2020}, and the Scalar Auxiliary Variable (SAV) approach \cite{SXY2018, SXY2019} for solving phase field models.  Extensions of these methods are  reported in \cite{YaDo2019, YaDo2020, Yang2021, LSZ2021}  on the design of linear, decoupled, unconditionally stable numerical schemes for solving general nonlinear equations satisfying an energy law.  Based on the SAV approach proposed in \cite{YaDo2019}, a stabilized SAV ensemble algorithm is developed in \cite{JiYa2021} for parameterized flow problems where superior accuracy is observed thanks to a penalization of the kinetic energy causing the high frequency mode to quickly roll-off in the energy spectrum \cite{ LLMNR2009}.  Stability and error analysis of a SAV method for the MHD equations is recently conducted in \cite{LWS2022}.

In this article we propose two linear, second-order accurate, unconditionally stable ensemble methods with shared coefficient matrix across different realizations and time steps for solving the MHD model. The parameters are treated by the usual perturbative method. We employ the Generalized Positive Auxiliary Variable framework (GPAV) from \cite{YaDo2020} in the discretization of the nonlinear terms. The advantages of the GPAV method include: linearity of the algebra equation for the scalar variable; provable positivity of the scalar variable; and flexibility in handling complex boundary conditions.  These Lagrange multiplier type approaches often suffer from poor accuracy especially for long time simulation of advection-dominated flow, cf. \cite{ZOWW2020} for a careful benchmark comparison study of the SAV approach. This drop in accuracy is also discussed and demonstrated in the numerical tests from \cite{YaDo2020}. In \cite{JZZ2022} a post-processing procedure is introduced to improve accuracy of the SAV method for the Cahn-Hilliard equation. In our method we adopt the stabilization technique of artificial viscosity that proves robust and efficient in past studies \cite{LLMNR2009, JiYa2021}.  The stabilization introduces a penalty term in the kinetic energy which leads to a quick roll-off of the under-resolved modes in the energy spectrum thus curtailing the inertial range and making the system more computable, cf. \cite{LLMNR2009}. This mechanism is well-known in the Navier-Stokes-$\alpha$ model for large eddy simulation of turbulence \cite{FHT2001, CHMZ1999}. We perform extensive numerical tests to gauge the accuracy, efficiency and robustness of the proposed ensemble methods.

To start, we define the ensemble mean and the fluctuation of the viscosity terms ${\nu}_j^n$ and the electric potential ${\gamma}_j^n$ at timestep $n$ respectively
\begin{align}
\nuM = \frac{1}{J}\sum_{j=1}^J \nu_j^n \quad&\text{ and } \quad \gamm = \frac{1}{J}\sum_{j=1}^J {\gamma}_j^n, \tag{mean}\\
 \nuP = \nu_j^n-\bar{\nu}^n  \quad&\text{ and } \quad \gamP = {\gamma}_j^n-{\bar{\gamma}}^n, \tag{fluctuation}\\
 \nuPMax = \max_j\max_{x\in\Omega}|\nuP(x)| \quad&\text{ and } \quad \gamPMax = \max_j\max_{x\in\Omega}|\gamP(x)|,\nonumber
\end{align}
where in our considerations $\nu_j^n= \nu_j$, $\gamma_j^n=\gamma_j$ are constants and $t_n= n\Delta t$ ($n=0,1,2, ...$).
Define
\begin{align}
	\vh{n+1/2} &= \frac{1}{2}(\vh{n+1}+\vh{n}),\qquad \vt = 2\vh{n-1/2}-\vh{n-3/2}, \\
	\qquad \vhbdf{n+1/2} &= \frac{3}{2}\vh{n} - \frac{1}{2}\vh{n-1},\qquad \vtt = 2\vh{n}-\vh{n-1}.
\end{align}

We define a shifted energy of the form
\begin{align}\label{shiftEner}
	E_j(t) = E[\boldsymbol{u}_j, \boldsymbol{B}_j] = \int_{\Omega} \frac{1}{2}|\boldsymbol{u}_j|^2 d\Omega + \int_{\Omega} \frac{s}{2}|\boldsymbol{B}_j|^2 d\Omega + C_0,
\end{align}
where $E[\boldsymbol{u}_j, \boldsymbol{B}_j]$ is the total kinetic energy of the system, which for physical examples is bounded from below, and $C_0$ is an arbitrarily small positive constant chosen in such a way that $E_j(t) > 0$ for $0\leq t \leq T$. Next, let $\mathcal{F}$ be any one-to-one increasing differentiable function with $\mathcal{F}^{-1} = \mathcal{G}$ such that
\begin{numcases}{}
	\mathcal{F}(\chi)>0,\quad  \chi>0, \label{incFuncF}\\
	\mathcal{G}(\chi)>0,\quad  \chi>0. \label{incFuncG}
\end{numcases}
The scalar variable $R_j(t)$ is defined by
\begin{align}
	R_j(t) &= \mathcal{G}(E_j),\\
	E_j(t) &= \mathcal{F}(R_j).
\end{align}
With $E_j$ as in \eqref{shiftEner}, $R_j(t)$ then satisfies
\begin{align}\label{FEnergyEq}
	\mathcal{F}^{\prime}(R_j)\frac{dR_j}{dt} = \int_{\Omega} \boldsymbol{u}_j\cdot \frac{\partial \ub}{\partial t} d\Omega + \int_{\Omega} s\boldsymbol{B}_j\cdot \frac{\partial \Bb}{\partial t} d\Omega.
\end{align}
Since $\frac{\mathcal{F}(R_j)}{E_j} = 1$ for all $j$, we may write
\begin{align}
	\mathcal{F}^{\prime}(R_j)\frac{d R_j}{dt} &= \int_{\Omega} \left[ \boldsymbol{u}_j\cdot \frac{\partial \ub}{\partial t} + s\boldsymbol{B}_j\cdot \frac{\partial \Bb}{\partial t}\right] d\Omega + \left[\frac{\mathcal{F}(R_j)}{E_j} - 1 \right] \bigg[ \int_{\Omega} \ub \cdot\bigg( \nuPert \Delta \ub - \nabla \pb + \fb \bigg) d\Omega \\
	&\quad + \int_{\Omega} s\Bb \cdot\bigg( \gamPert \Delta \Bb - \nabla \lamb + \curlGb \bigg) d\Omega \bigg] \br
	&\quad + \frac{\mathcal{F}(R_j)}{E_j} \bigg[ \int_{\Omega} \ub \cdot[\Bb \cdot \nabla \Bb - \ub \cdot \nabla \ub] d\Omega - \int_{\Omega} \ub \cdot[\Bb \cdot \nabla \Bb - \ub \cdot \nabla \ub] d\Omega \br
	&\quad + \int_{\Omega} s\Bb \cdot[\Bb \cdot \nabla \ub - \ub \cdot \nabla \Bb] d\Omega - \int_{\Omega} s\Bb \cdot[\Bb \cdot \nabla \ub - \ub \cdot \nabla \Bb] d\Omega \bigg] \br
	&= \int_{\Omega} \left[ \boldsymbol{u}_j\cdot \frac{\partial \ub}{\partial t} + s\boldsymbol{B}_j\cdot \frac{\partial \Bb}{\partial t}\right] d\Omega \br
	&\quad- 
	\int_{\Omega} \ub \cdot\bigg( \nuPert \Delta \ub - \nabla \pb + \frac{\mathcal{F}(R_j)}{E_j}[\Bb \cdot \nabla \Bb - \ub \cdot \nabla \ub]
	+ \fb \bigg)d\Omega \br
	&\quad- 
	\int_{\Omega} s\Bb \cdot\bigg( \gamPert \Delta \Bb - \nabla \lamb + \frac{\mathcal{F}(R_j)}{E_j}[\Bb \cdot \nabla \ub - \ub \cdot \nabla \Bb]
	+ \curlGb \bigg)d\Omega \br
	&\quad 
	+ \frac{\mathcal{F}(R_j)}{E_j}\bigg[ \int_{\Omega} \ub \cdot[\Bb \cdot \nabla \Bb - \ub \cdot \nabla \ub + \fb] d\Omega \br
	&\quad+ \int_{\Omega} s\Bb \cdot[\Bb \cdot \nabla \ub - \ub \cdot \nabla \Bb + \curlGb] d\Omega \bigg]. \nonumber
\end{align}
Note that all the additional terms above amount to adding zero to \eqref{FEnergyEq}. Using integration by parts we get the equality
\begin{align}
	&\int_{\Omega} \ub \cdot[\Bb \cdot \nabla \Bb - \ub \cdot \nabla \ub + \fb] d\Omega + \int_{\Omega} s\Bb \cdot[\Bb \cdot \nabla \ub - \ub \cdot \nabla \Bb + \curlGb] d\Omega \\
	&\qquad = -\int_{\Omega} (\nuPert |\nabla \ub|^2 + s \gamPert |\nabla \Bb|^2 ) d\Omega + \int_{\Omega} (\fb \cdot \ub + s (\curlGb) \cdot \Bb ) d\Omega + \int_{\Gamma} B_{S}( \ub, \Bb ) d\Gamma, \nonumber
\end{align}
where $B_{S}( \ub, \Bb )$ represents the forcing terms on the boundary, defined as
\begin{align}
	B_{S}( \ub, \Bb ) &= \int_{\Gamma} \bigg( -\frac{1}{2}|\ub|^2 \ub - \frac{s}{2}|\Bb|^2 \ub + \nuPert \nabla \ub\cdot \ub - \pb \ub  \\
	&\quad +  s(\Bb \cdot \ub)\Bb + s\gamPert \nabla\Bb \cdot \Bb - s\lamb \Bb \bigg)\cdot \hat{n} \hspace{0.5em} d\Gamma \nonumber
\end{align}
and $\hat{n}$ is the unit normal vector to the boundary. We use this equality and write
\begin{align}
	\mathcal{F}^{\prime}(R_j)\frac{d R_j}{dt} &= \int_{\Omega} \left[ \boldsymbol{u}_j\cdot \frac{\partial \ub}{\partial t} + s\boldsymbol{B}_j\cdot \frac{\partial \Bb}{\partial t}\right] d\Omega \\
	&\quad- 
	\int_{\Omega} \ub \cdot\bigg( \nuPert \Delta \ub - \nabla \pb + \frac{\mathcal{F}(R_j)}{E_j}[\Bb \cdot \nabla \Bb - \ub \cdot \nabla \ub]
	+ \fb \bigg)d\Omega \br
	&\quad- 
	\int_{\Omega} s\Bb \cdot\bigg( \gamPert \Delta \Bb - \nabla \lamb + \frac{\mathcal{F}(R_j)}{E_j}[\Bb \cdot \nabla \ub - \ub \cdot \nabla \Bb]
	+ \curlGb \bigg)d\Omega \br
	&\quad 
	+ \frac{\mathcal{F}(R_j)}{E_j}\bigg[ -\int_{\Omega} (\nuPert |\nabla \ub|^2 + s \gamPert |\nabla \Bb|^2 ) d\Omega + \int_{\Omega} (\fb \cdot \ub + s (\curlGb) \cdot \Bb ) d\Omega \br
	&\quad+ \int_{\Gamma} B_{S}( \ub, \Bb ) d\Gamma \bigg] \br
	&\quad
	+ \bigg[ 1 - \frac{\mathcal{F}(R_j)}{E_j}\bigg] \Bigg| \int_{\Omega} (\fb \cdot \ub + s (\curlGb) \cdot \Bb ) d\Omega + \int_{\Gamma} B_{S}( \ub, \Bb ) d\Gamma \Bigg|, \nonumber
\end{align}
As will be seen later, we consider this reformulation (including the addition of the terms within absolute value brackets) as a means of constructing numerical schemes that inherit unconditional stability with respect to the modified energy $\mathcal{F}(R_j)$ and guaranteed positivity of a computed scalar variable $\xi_j$ to be defined. \\

With Dirichlet boundary conditions, a Crank-Nicolson scheme for \ref{eq:MHD} becomes
\begin{algorithm}\label{AlgoCN}
Given $\boldsymbol{u}_j^n$, $\boldsymbol{B}_j^n$, $\qu{n}$ and $\pe{n}$, find $\boldsymbol{u}_j^{n+1}$, $\boldsymbol{B}_j^{n+1}$, $\qu{n+1}$ and $p_{j}^{n+1}$ satisfying
\begin{flalign}
	&\left(\frac{\ue{n+1}-\ue{n}}{\Delta t}\right) = - \xi_j \left(\ute\cdot\nabla\right)\ute + s\xi_j \left(\bte\cdot\nabla\right)\bte + \nuM\Delta \ue{n+1/2} \label{algpCN1} &&\\
	&\qquad\qquad\qquad\qquad+ \nuP\Delta \ute - \nabla \pe{n+1/2} + \fe{n+1/2}, &&\br
	&\nabla\cdot \ue{n+1} = 0, \label{udivfree}
	&&\\
	&\left(\frac{\Be{n+1}-\Be{n}}{\Delta t}\right) = \xi_j \left(\bte\cdot\nabla\right)\ute 
	- \xi_j\left(\ute\cdot\nabla\right)\bte + \gamm \Delta \Be{n+1/2} \label{algpCN2} &&\\
	&\qquad\qquad\qquad\qquad+ \gamP \Delta \bte - \nabla \lame{n+1/2} + \geNew{n+1/2},&&\br
	&\nabla\cdot \Be{n+1} = 0, \label{bdivfree}
	&&\\
	&\xi_j = \frac{\mathcal{F}(\ru{n+1})}{E(\ubar{n+1}, \bbar{n+1})},
	&&\\
	&E(\ubar{n+1}, \bbar{n+1}) = \frac{1}{2}\norm{\ubar{n+1}}^2 + \frac{s}{2}\norm{\bbar{n+1}}^2 + C_0,&&\\	
	&\frac{\mathcal{F}(\ru{n+1}) - \mathcal{F}(\ru{n})}{\Delta t} 
	=
	\int_{\Omega} \ue{n+1/2}\cdot\left(\frac{\ue{n+1}-\ue{n}}{\Delta t}\right) d\Omega + \int_{\Omega} s\Be{n+1/2}\cdot\left(\frac{\Be{n+1}-\Be{n}}{\Delta t}\right) d\Omega &&\\
	&\qquad- \int_{\Omega}\ue{n+1/2}\cdot \bigg[- \xi_j \left(\ute\cdot\nabla\right)\ute + s\xi_j \left(\bte\cdot\nabla\right)\bte &&\br
	&\qquad + \nuM\Delta \ue{n+1/2} + \nuP\Delta \ute  - \nabla \pe{n+1/2} + \fe{n+1/2}\bigg] d\Omega &&\br
	&\qquad -\int_{\Omega}s\Be{n+1/2}\cdot \bigg[\xi_j \left(\bte\cdot\nabla\right)\ute  - \xi_j\left(\ute\cdot\nabla\right)\bte &&\br
	&\qquad  + \gamm \Delta \Be{n+1/2} + \gamP \Delta \bte - \nabla \lame{n+1/2} + \geNew{n+1/2}\bigg] d\Omega &&\br
	&\qquad+\xi_j\bigg[ -\int_{\Omega}\left(\nu_j|\nabla\ubar{n+1/2}|^2 + s\gamma_j |\nabla\bbar{n+1/2}|^2 \right)d\Omega + \int_{\Omega} \fe{n+1/2}\cdot\ubar{n+1/2} d\Omega 
	&&\br
	&\qquad+ \int_{\Omega} s(\geNew{n+1/2})\cdot\bbar{n+1/2} d\Omega + \int_{\Gamma} B_{S}(\fbe{n+1/2}, \gbe{n+1/2}, \ubar{n+1/2}, \bbar{n+1/2})d\Gamma \bigg] &&\br
	&\qquad
	+ (1-\xi_j)\bigg| \int_{\Omega} \fe{n+1/2}\cdot\ubar{n+1/2} d\Omega+ \int_{\Omega} s(\geNew{n+1/2})\cdot\bbar{n+1/2} d\Omega &&\br
	&\qquad
	+ \int_{\Gamma} B_{S}(\fbe{n+1/2}, \gbe{n+1/2}, \ubar{n+1/2}, \bbar{n+1/2})d\Gamma \bigg|.\nonumber&&
\end{flalign}
Here $\ubar{n+1}$, $\ubar{n+3/2}$, $\bbar{n+1}$ and $\bbar{n+3/2}$ are second order approximations of $\ue{n+1}$, $\ue{n+3/2}$, $\Be{n+1}$, and $\Be{n+3/2}$ that will be defined later. 
\end{algorithm}

Again for Dirichlet boundary conditions, a BDF2 scheme is
\begin{algorithm}\label{AlgoBDF2}
	Given $\boldsymbol{u}_j^n$, $\boldsymbol{B}_j^n$, $\qu{n}$ and $\pe{n}$, find $\boldsymbol{u}_j^{n+1}$, $\boldsymbol{B}_j^{n+1}$, $\qu{n+1}$ and $p_{j}^{n+1}$ satisfying
	\begin{flalign}
		&\left(\frac{3\ue{n+1}-4\ue{n}+\ue{n-1}}{2\Delta t}\right) = - \xi_j \left(\utte\cdot\nabla\right)\utte \label{algpBDF1} 
		+ s\xi_j \left(\btte\cdot\nabla\right)\btte + \nuM\Delta \ue{n+1} &&\\
		&\qquad\qquad\qquad\qquad\qquad+ \nuP\Delta \utte - \nabla \pe{n+1} + \fe{n+1},&&\br
		&\nabla\cdot \ue{n+1} = 0, \label{udivfreeCN}
		&&\\
		&\left(\frac{3\Be{n+1}-4\Be{n}+\Be{n-1}}{2\Delta t}\right) = \xi_j \left(\btte\cdot\nabla\right)\utte \label{algpBDF2}
		- \xi_j\left(\utte\cdot\nabla\right)\btte + \gamm \Delta \Be{n+1} &&\\
		&\qquad\qquad\qquad\qquad\qquad+ \gamP \Delta \btte - \nabla \lame{n+1} + \geNew{n+1},&&\br
		&\nabla\cdot \Be{n+1} = 0, \label{bdivfreeCN}
		&&\\
		&\xi_j = \frac{\mathcal{F}(\rubdf{n+3/2})}{E(\ubar{n+3/2}, \bbar{n+3/2})},
		&&\\
		&E(\ubar{n+3/2}, \bbar{n+3/2}) = \frac{1}{2}\norm{\ubar{n+3/2}}^2 + \frac{s}{2}\norm{\bbar{n+3/2}}^2 + C_0,&&\\	
		&\frac{\mathcal{F}(\rubdf{n+3/2}) - \mathcal{F}(\rubdf{n+1/2})}{\Delta t} 
		=
		\int_{\Omega} \ue{n+1}\cdot\left(\frac{3\ue{n+1}-4\ue{n}+\ue{n-1}}{2\Delta t}\right) d\Omega &&\br
		&\qquad+ \int_{\Omega} s\Be{n+1}\cdot\left(\frac{3\Be{n+1}-4\Be{n}+\Be{n-1}}{2\Delta t}\right) d\Omega &&\\
		&\qquad- \int_{\Omega}\ue{n+1}\cdot \bigg[- \xi_j \left(\utte\cdot\nabla\right)\utte + s\xi_j \left(\btte\cdot\nabla\right)\btte &&\br
		&\qquad+ \nuM\Delta \ue{n+1} + \nuP\Delta \utte  - \nabla \pe{n+1} + \fe{n+1}\bigg] d\Omega &&\br
		&\qquad- \int_{\Omega}s\Be{n+1}\cdot \bigg[\xi_j \left(\btte\cdot\nabla\right)\utte - \xi_j\left(\utte\cdot\nabla\right)\btte &&\br
		&\qquad + \gamm \Delta \Be{n+1} + \gamP \Delta \btte - \nabla \lame{n+1} + \geNew{n+1}\bigg] d\Omega
		&&\br
		&\qquad+\xi_j\bigg[ -\int_{\Omega}\left(\nu_j|\nabla\ubar{n+1}|^2 + s\gamma_j |\nabla\bbar{n+1}|^2 \right)d\Omega + \int_{\Omega} \fe{n+1}\cdot\ubar{n+1} d\Omega 
		&&\br
		&\qquad+ \int_{\Omega} s(\geNew{n+1})\cdot\bbar{n+1} d\Omega + \int_{\Gamma} B_{S}(\fbe{n+1}, \gbe{n+1}, \ubar{n+1}, \bbar{n+1})d\Gamma \bigg] &&\br
		&\qquad
		+ (1-\xi_j)\bigg| \int_{\Omega} \fe{n+1}\cdot\ubar{n+1} d\Omega+ \int_{\Omega} s(\geNew{n+1})\cdot\bbar{n+1} d\Omega &&\br
		&\qquad
		+ \int_{\Gamma} B_{S}(\fbe{n+1}, \gbe{n+1}, \ubar{n+1}, \bbar{n+1})d\Gamma \bigg|.&&\nonumber
	\end{flalign}
Similarly $\ubar{n+1}$, $\ubar{n+1/2}$, $\bbar{n+1}$ and $\bbar{n+1/2}$ are second order approximations of $\ue{n+1}$, $\ue{n+1/2}$, $\Be{n+1}$, and $\Be{n+1/2}$ to be defined later.
\end{algorithm}

The rest of the paper is outlined here. Section 2 gives mathematical preliminaries and defines notation. In Section 3, we prove the long time stability of the proposed algorithm. Section 4 presents an efficient way to implement our numerical algorithm. Section 5 numerically tests the proposed algorithm and illustrates theoretical results. Final conclusions and future directions are discussed in Section 6.

\section{Notation and preliminaries}

Throughout this paper the $L^2(\Omega)$ norm of scalars, vectors, and tensors will be denoted by $\Vert \cdot\Vert$ with the usual $L^2$ inner product denoted by $(\cdot, \cdot)$.  $H^{k}(\Omega)$ is the Sobolev space $%
W_{2}^{k}(\Omega)$, with norm $\Vert\cdot\Vert_{k}$. For functions $v(x,t)$ defined on $(0,T)$, we define the norms, for
$1\leq m<\infty$,
\[
\| v \|_{\infty,k} \text{ }:=EssSup_{[0,T]}\| v(\cdot, t)\|_{k}\qquad \text{and}\qquad \|v\|_{m,k} \text{ }:= \Big(  \int_{0}^{T}\|v(\cdot, t)\|_{k}^{m}\, dt\Big)
^{1/m} \text{ .}%
\]
The function spaces we consider are:
\begin{align*}
	X:&=H_{0}^{1}(\Omega )^{d}=\left\lbrace v\in L^2(\Omega)^d: \nabla v\in L^2(\Omega)^{d\times d}\text{ and }v=0 \text{ on } \partial \Omega\right\rbrace,\\
	Q : &=L_{0}^{2}(\Omega )=\left\lbrace q\in L^2(\Omega): \int_{\Omega} q \text{ }dx=0\right\rbrace,\\
	V : &=\left\lbrace v\in X:  (\nabla\cdot v, q)=0, \forall q\in Q \right\rbrace.
\end{align*}

A weak formulation of the full MHD equations is: Find $\boldsymbol{u}_j: [0,T]\rightarrow X$, $p_j: [0,T]\rightarrow Q$, $\boldsymbol{B}_j :  [0,T] \rightarrow X$ and $\lambda_j: [0,T]\rightarrow Q$ satisfying
\begin{align}
	&\left( \boldsymbol{u}_{j,t}, \boldsymbol{v}\right) + \left( \boldsymbol{u}_j\cdot\nabla \boldsymbol{u}_j, \boldsymbol{v} \right) - s\left( \boldsymbol{B}_j\cdot\nabla \boldsymbol{B}_j, \boldsymbol{v} \right) + \nu_j\left(\nabla  \boldsymbol{u}_j, \nabla \boldsymbol{v}\right) - \left( p_j, \nabla \cdot \boldsymbol{v}\right) = \left( \boldsymbol{f}_j, \boldsymbol{v}\right), \quad \forall \boldsymbol{v}\in X, \nonumber\\
	&\left( \nabla \cdot \boldsymbol{u}_j, l\right)=0, \quad \forall l\in Q,\nonumber\\
	&\left( \boldsymbol{B}_{j,t}, \boldsymbol{\chi}\right) + \left( \boldsymbol{u}_j\cdot\nabla \boldsymbol{B}_j, \boldsymbol{\chi} \right) - \left( \boldsymbol{B}_j\cdot\nabla \boldsymbol{u}_j, \boldsymbol{\chi} \right) + \gamma_j\left(\nabla  \boldsymbol{u}_j, \nabla \boldsymbol{\chi}\right) - \left( \lambda_j, \nabla \cdot \boldsymbol{\chi}\right) = \left( \boldsymbol{g}_j, \boldsymbol{\chi}\right), \quad \forall \boldsymbol{\chi}\in X, \nonumber\\
	&\left( \nabla \cdot \boldsymbol{B}_j, \psi\right)=0, \quad \forall \psi\in Q.\nonumber
\end{align}

We denote conforming velocity, pressure, potential finite element spaces based on an edge to edge
triangulation ($d=2$) or tetrahedralization ($d=3$) of $\Omega $ with maximum element diameter $h$ by 
\begin{equation*}
	X_{h}\subset X\text{ }\text{, }Q_{h}\subset Q.
\end{equation*}%
We also assume the finite element spaces ($X_{h}$, $Q_{h}$) satisfy the usual discrete inf-sup /$ LBB^{h}$
condition for stability of the discrete pressure, see \cite{1989} for more on this condition. Taylor-Hood elements, e.g., \cite{brenner2007mathematical}, \cite{1989}, are one such choice used in the tests in Section \ref{numTestSec}. We define the standard explicitly skew-symmetric trilinear form
\[
	b^{\ast}(u,v,w):=\frac{1}{2}(u\cdot\nabla v,w)-\frac{1}{2}(u\cdot\nabla w,v) 
\]

The full discretization of the proposed partitioned ensemble algorithm with Crank-Nicolson scheme is
\begin{algorithm}\label{AlgoCND}
	Given $\un{n}$, $\bn{n}$, $\qun{n}$, $\pn{n}$ and $\lamn{n}$, find $\un{n+1}$, $\bn{n+1}$, $\qun{n+1}$, $\pn{n+1}$ and $\lamn{n+1}$ satisfying for any $\boldsymbol{v_h},\boldsymbol{\chi_h}\in X_h$ and $l_h, \psi_h\in Q_h$,
	\begin{flalign}
		&\left(\frac{\un{n+1}-\un{n}}{\Delta t}, \boldsymbol{v_h}\right) = -\xi_j \bs{\ut}{\ut}{\boldsymbol{v_h}} + s\xi_j\bs{\bt}{\bt}{\boldsymbol{v_h}} \label{algp1CND} &&\\
		&\qquad - \nuM \left(\nabla\un{n+1/2}, \nabla \boldsymbol{v_h}\right) - \nuP\left(\nabla \ut, \nabla \boldsymbol{v_h}\right) + \left(\pn{n+1/2}, \nabla\cdot \boldsymbol{v_h}\right) &&\br
		&\qquad  - \alpha h \left(\nabla( \un{n+1} - \un{n} ), \nabla \boldsymbol{v_h}\right) + \left(\fn{n+1/2}, \boldsymbol{v_h}\right),
		&&\br
		&\left(\nabla\cdot \un{n+1}, l_h\right) = 0, \label{udivfreeCND} 
		&&\\
		&\left(\frac{\bn{n+1}-\bn{n}}{\Delta t}, \boldsymbol{\chi_h}\right) = \xi_j \bs{\bt}{\ut}{\boldsymbol{\chi_h}} - \xi_j\bs{\ut}{\bt}{\boldsymbol{\chi_h}}\label{algp2CND} &&\\
		&\qquad - \gamm \left(\nabla \bn{n+1/2}, \nabla \boldsymbol{\chi_h}\right) - \gamP \left(\nabla \bt, \nabla \boldsymbol{\chi_h}\right) + \left( \lamn{n+1/2}, \nabla\cdot \boldsymbol{\chi_h} \right)&&\br
		&\qquad  - \alpha_M h \left(\nabla( \bn{n+1} - \bn{n} ), \nabla \boldsymbol{\chi_h}\right) + \left(\gn{n+1/2}, \boldsymbol{\chi_h}\right),&&\br
		&\left(\nabla\cdot\bn{n+1}, \psi_h\right) = 0, \label{bdivfreeCND} 
		&&\\
		&\xi_j = \frac{\mathcal{F}(\run{n+1})}{E(\ubn{n+1}, \bbn{n+1})},
		&&\\
		&E(\ubn{n+1}, \bbn{n+1}) = \frac{1}{2}\norm{\ubn{n+1}}^2 + \frac{s}{2}\norm{\bbn{n+1}}^2 + C_0,
		&&\\	
		&\frac{\mathcal{F}(\run{n+1}) - \mathcal{F}(\run{n})}{\Delta t} 
		=
		\left( \frac{\un{n+1}-\un{n}}{\Delta t}, \un{n+1/2} \right) + s\left( \frac{\bn{n+1}-\bn{n}}{\Delta t}, \bn{n+1/2} \right) \label{scalarEqCND}&&\\
		&\qquad+ \xi_j \bs{\ut}{\ut}{\un{n+1/2}} - s\xi_j \bs{\bt}{\bt}{\un{n+1/2}} + \nuM\norm{ \nabla \un{n+1/2} }^2 &&\br
		&\qquad+ \nuP\left( \nabla \ut, \nabla \un{n+1/2}\right)  - \left( \pn{n+1/2}, \nabla\cdot \un{n+1/2} \right) &&\br
		&\qquad + \alpha h \left(\nabla( \un{n+1} - \un{n} ), \nabla \boldsymbol{v_h}\right) - \left(\fn{n+1/2}, \un{n+1/2} \right)
		&&\br
		&\qquad - s\xi_j \bs{\bt}{\ut}{\bn{n+1/2}} + s\xi_j\bs{\ut}{\bt}{\bn{n+1/2}} + s\gamm \norm{ \nabla \bn{n+1/2} }^2 
		&&\br
		&\qquad+ s\gamP \left( \nabla \btt, \nabla \bn{n+1/2} \right) - s\left( \lamn{n+1/2}, \nabla\cdot \bn{n+1/2} \right) + s\alpha_M h \left(\nabla( \bn{n+1} - \bn{n} ), \nabla \bn{n+1/2}\right)
		&&\br
		&\qquad - s\left( \gn{n+1/2}, \bn{n+1/2} \right) + \xi_j\bigg[ -\int_{\Omega}\left(\nu_j|\nabla\ubn{n+1/2}|^2 + s\gamma_j |\nabla\bbn{n+1/2}|^2 \right)d\Omega + \int_{\Omega} \fn{n+1/2}\cdot\ubn{n+1/2} d\Omega 
		&&\br
		&\qquad+ \int_{\Omega} s(\gn{n+1/2})\cdot\bbn{n+1/2} d\Omega + \int_{\Gamma} B_{S}(\fn{n+1/2}, \gn{n+1/2}, \ubn{n+1/2}, \bbn{n+1/2})d\Gamma \bigg] &&\br
		&\qquad
		+ (1-\xi_j)\bigg| \int_{\Omega} \fn{n+1/2}\cdot\ubn{n+1/2} d\Omega + \int_{\Omega} s(\gn{n+1/2})\cdot\bbn{n+1/2} d\Omega &&\br
		&\qquad
		+ \int_{\Gamma} B_{S}(\fn{n+1/2}, \gn{n+1/2}, \ubn{n+1/2}, \bbn{n+1/2})d\Gamma \bigg|.&&\nonumber
	\end{flalign}
\end{algorithm}

The full discretization of the proposed partitioned ensemble algorithm with BDF2 scheme is
\begin{algorithm}\label{AlgoBDFD}
	Given $\un{n}$, $\bn{n}$, $\qun{n}$, $\pn{n}$ and $\lamn{n}$, find $\un{n+1}$, $\bn{n+1}$, $\qun{n+1}$, $\pn{n+1}$ and $\lamn{n+1}$ satisfying for any $\boldsymbol{v_h},\boldsymbol{\chi_h}\in X_h$ and $l_h, \psi_h\in Q_h$,
	\begin{flalign}
	&\left(\frac{3\un{n+1}-4\un{n}+\un{n-1}}{2\Delta t}, \boldsymbol{v_h}\right) \label{algp1BDFD} 
	 = -\xi_j \bs{\utt}{\utt}{\boldsymbol{v_h}} + s\xi_j \bs{\btt}{\btt}{\boldsymbol{v_h}} &&\\
	&\qquad - \nuM \left(\nabla\un{n+1}, \nabla \boldsymbol{v_h}\right) - \nuP\left(\nabla \utt, \nabla \boldsymbol{v_h}\right)  +  \left(\pn{n+1}, \nabla\cdot \boldsymbol{v_h}\right) &&\br
	&\qquad 
	- \alpha h \left(\nabla( 3\un{n+1} - 4\un{n} + \un{n-1} ), \nabla \boldsymbol{v_h}\right) + \left(\fn{n+1}, \boldsymbol{v_h}\right),&&\br
	&\left(\nabla\cdot \un{n+1}, l_h\right) = 0, \label{udivfreeBDFD} &&\\
	&\left(\frac{3\bn{n+1}-4\bn{n}+\bn{n-1}}{2\Delta t}, \boldsymbol{\chi_h}\right) = \xi_j \bs{\btt}{\utt}{\boldsymbol{\chi_h}} - \xi_j \bs{\utt}{\btt}{\boldsymbol{\chi_h}} \label{algp2BDFD} &&\\
	&\qquad - \gamm \left(\nabla \bn{n+1}, \nabla \boldsymbol{\chi_h}\right) - \gamP \left(\nabla \btt, \nabla \boldsymbol{\chi_h}\right)  + \left( \lamn{n+1}, \nabla\cdot \boldsymbol{\chi_h} \right) &&\br
	&\qquad - \alpha_M h \left(\nabla( 3\bn{n+1} - 4\bn{n} + \bn{n-1} ), \nabla \boldsymbol{\chi_h}\right) + \left(\gn{n+1}, \boldsymbol{\chi_h}\right),&&\br
	&\left(\nabla\cdot\bn{n+1}, \psi_h\right) = 0, \label{bdivfreeBDFD} 
	&&\\
	&\xi_j = \frac{\mathcal{F}(\runbdf{n+1})}{E(\ubn{n+1}, \bbn{n+1})}, \label{scalarBDFD}
	&&\\
	&E(\ubn{n+3/2}, \bbn{n+3/2}) = \frac{1}{2}\norm{\ubn{n+3/2}}^2 + \frac{s}{2}\norm{\bbn{n+3/2}}^2 + C_0, \label{energyBDFD}
	&&\\	
	&\frac{\mathcal{F}(\runbdf{n+3/2}) - \mathcal{F}(\runbdf{n+1/2})}{\Delta t} 
	=
	\left( \frac{3\un{n+1}-4\un{n}+\un{n-1}}{2\Delta t}, \un{n+1} \right) &&\br
	&\qquad+ s\left( \frac{3\bn{n+1}-4\bn{n}+\bn{n-1}}{2\Delta t}, \bn{n+1} \right) + \xi_j \bs{\utt}{\utt}{\un{n+1}} \label{scalarEqBDFD} &&\\
	&\qquad- s\xi_j \bs{\btt}{\btt}{\un{n+1}} + \nuM\norm{ \nabla \un{n+1} }^2 + \nuP\left( \nabla \utt, \nabla \un{n+1}\right)&&\br
	&\qquad  - \left( \pn{n+1}, \nabla\cdot \un{n+1} \right) + \alpha h \left(\nabla( 3\un{n+1} - 4\un{n} + \un{n-1} ), \nabla \boldsymbol{v_h}\right) - \left(\fn{n+1}, \un{n+1} \right)
	&&\br
	&\qquad - s\xi_j \bs{\btt}{\utt}{\bn{n+1}}  + s\xi_j\bs{\utt}{\btt}{\bn{n+1}} + s\gamm \norm{ \nabla \bn{n+1} }^2 + s\gamP \left( \nabla \btt, \nabla \bn{n+1} \right)
	&&\br
	&\qquad - s\left( \lamn{n+1}, \nabla\cdot \bn{n+1} \right) + s\alpha_M h \left(\nabla( 3\bn{n+1} - 4\bn{n} + \bn{n-1} ), \nabla \boldsymbol{\chi_h}\right) - s\left( \gn{n+1}, \bn{n+1} \right)
	&&\br
	&\qquad+\xi_j\bigg[ -\int_{\Omega}\left(\nu_j|\nabla\ubn{n+1}|^2 + s\gamma_j |\nabla\bbn{n+1}|^2 \right)d\Omega + \int_{\Omega} \fn{n+1}\cdot\ubn{n+1} d\Omega 
	&&\br
	&\qquad+ \int_{\Omega} s(\gn{n+1})\cdot\bbn{n+1} d\Omega + \int_{\Gamma} B_{S}(\fn{n+1}, \gn{n+1}, \ubn{n+1}, \bbn{n+1})d\Gamma \bigg] &&\br
	&\qquad
	+ (1-\xi_j)\bigg| \int_{\Omega} \fn{n+1}\cdot\ubn{n+1} d\Omega+ \int_{\Omega} s(\gn{n+1})\cdot\bbn{n+1} d\Omega &&\br
	&\qquad
	+ \int_{\Gamma} B_{S}(\fn{n+1}, \gn{n+1}, \ubn{n+1}, \bbn{n+1})d\Gamma \bigg|.&&\nonumber
	\end{flalign}
\end{algorithm}
There's also the addition of two regularization terms in Algorithms \eqref{AlgoCND} and \eqref{AlgoBDFD},

\begin{flalign*}
	& \begin{aligned} & 
		\begin{cases}
			\alpha h \Delta( \un{n+1} - \un{n} ), \\
			\alpha_M h \Delta( \bn{n+1} - \bn{n} ), \\
		\end{cases}\text{for CN,}
	\end{aligned} \qquad
	\begin{aligned}
		& 
		\begin{cases}
			\alpha h \Delta( 3\un{n+1} - 4\un{n} + \un{n-1} ), \\
			\alpha_M h \Delta( 3\bn{n+1} - 4\bn{n} + \bn{n-1} ), \\
		\end{cases} \text{for BDF2.}
	\end{aligned}
\end{flalign*}
These terms are highly effective at reducing the considerable error that eventually appears when the timestep is not sufficiently refined. Significant improvement in accuracy  will be seen later in the numerical tests. It's noted in \cite{LLMNR2009} that this improvement cannot be explained by the stability or error analysis alone. Instead, an explanation is offered through analysis of a modified form of the equations under consideration. In the modified equations, the addition of the term $-\alpha h k \Delta u_t$ (in the case of velocity) and $-\alpha h k \Delta B_t$ (in the case of magnetic field) are added to the left-hand sides,
\begin{equation}\label{eq:MHDmod}
	\left\{\begin{aligned}
	&\left[\boldsymbol{u}_{j,t} - \alpha h k\Delta\boldsymbol{u}_{j,t}\right] +\boldsymbol{u}_j\cdot\nabla \boldsymbol{u}_j  -s\boldsymbol{B}_j\cdot \nabla \boldsymbol{B}_j - \nu_j \Delta \boldsymbol{u}_j + \nabla p_j  
	=\boldsymbol{f}_j \text{ in } \Omega\times (0,T),\\
	&
	\nabla \cdot \boldsymbol{u}_j = 0, \text{ in } \Omega\times (0,T),\\
	&
	\left[\boldsymbol{B}_{j,t} - s\alpha_M h k \Delta\boldsymbol{B}_{j,t} \right] + \boldsymbol{u}_j\cdot \nabla \boldsymbol{B}_j - \boldsymbol{B}_j\cdot \nabla \boldsymbol{u}_j - \gamma_j \Delta \boldsymbol{B}_j + \nabla \lambda_j
	=
	\nabla \times \boldsymbol{g}_j \text{ in } \Omega\times (0,T),\\
	&
	\nabla \cdot \boldsymbol{B}_j = 0, \text{ in } \Omega\times (0,T),\\
	&
	\boldsymbol{u}_j(x,0)   =\boldsymbol{u}_j^{0}(x),   \text{ in } \Omega,\quad \boldsymbol{B}_j(x,0)    = \boldsymbol{B}_j^{0}(x), \text{ in } \Omega.
	\end{aligned}\right.
\end{equation}
This results in a modified kinetic energy corresponding to the equation. In our case, the resulting modified kinetic energy would be
\[
	\norm{u(t)}^2 + \alpha h k \norm{\nabla u(t)}^2 + s\norm{B(t)}^2 + s \alpha_M h k\norm{\nabla B(t)}^2.
\]
Following Kraichnan's theory \cite{Kraichnan1967}, it is argued in \cite{ LLMNR2009} that the penalty term in the kinetic energy induces an enhanced energy decay rate for numerically under-resolved modes while preserving the correct energy cascade above the cut-off length scale.  The quick roll-off  in the energy spectrum  is  also exploited in the Navier-Stokes-$\alpha$ model (NS-$\alpha$)--a nonlinearly dispersive modification of the Navier-Stokes equations for large eddy simulation of turbulence  \cite{FHT2001, CHMZ1999}.  This roll-off mechanism shortens the inertial range  and makes the system more computable.

\section{Stability of the method}

\subsection{Crank-Nicolson}\label{stability}
\begin{theorem}
	With homogeneous boundary conditions and forcing terms equal to zero, Algorithm \eqref{AlgoCND} is unconditionally stable with respect to the modified energy $\mathcal{F}(R_j)$.\\
\end{theorem}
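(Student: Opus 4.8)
The strategy is to read the scalar update \eqref{scalarEqCND} as a discrete identity for $\mathcal{F}(\run{n})$ and then run a single induction that yields, simultaneously, the positivity of $\xi_j$ and the monotone decay of $\mathcal{F}(\run{n})$. First I would test the discrete momentum equation \eqref{algp1CND} with $\boldsymbol{v_h}=\un{n+1/2}$ and the discrete induction equation \eqref{algp2CND} with $\boldsymbol{\chi_h}=\bn{n+1/2}$; since $\un{n+1},\un{n},\bn{n+1},\bn{n}$ are discretely solenoidal against $Q_h$ by \eqref{udivfreeCND}--\eqref{bdivfreeCND}, the pressure pairing $(\pn{n+1/2},\nabla\cdot\un{n+1/2})$ and the multiplier pairing $(\lamn{n+1/2},\nabla\cdot\bn{n+1/2})$ vanish. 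The structural observation is that the right-hand side of \eqref{scalarEqCND} is, by construction,
\[
\Big[\big(\tfrac{\un{n+1}-\un{n}}{\Delta t},\un{n+1/2}\big)-\mathcal{N}_u(\un{n+1/2})\Big]
+s\Big[\big(\tfrac{\bn{n+1}-\bn{n}}{\Delta t},\bn{n+1/2}\big)-\mathcal{N}_B(\bn{n+1/2})\Big]
+\xi_j\,\mathcal{R}+(1-\xi_j)\,|\mathcal{S}|,
\]
where $\mathcal{N}_u,\mathcal{N}_B$ denote the right-hand sides of \eqref{algp1CND},\eqref{algp2CND} (up to the evident misprints $\nabla\boldsymbol{v_h}\leftrightarrow\nabla\un{n+1/2}$ and $\nabla\btt\leftrightarrow\nabla\bt$ in \eqref{scalarEqCND}), $\mathcal{R}$ collects the viscous-dissipation, forcing and boundary contributions, and $\mathcal{S}$ is the forcing/boundary part alone. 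By \eqref{algp1CND}--\eqref{algp2CND} the two bracketed differences vanish \emph{identically} --- this is purely algebraic, so no skew-symmetry of $b^{*}$ is used and none of the artificial-viscosity terms $\alpha h(\cdot,\cdot)$, $\alpha_M h(\cdot,\cdot)$ survive --- and \eqref{scalarEqCND} collapses to
\[
\mathcal{F}(\run{n+1})-\mathcal{F}(\run{n})=\Delta t\,\big(\xi_j\,\mathcal{R}+(1-\xi_j)\,|\mathcal{S}|\big).
\]

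Next I would insert the hypotheses. With $\fn{n+1/2}=\gn{n+1/2}=0$ and homogeneous Dirichlet data, $\ubn{n+1/2}$ and $\bbn{n+1/2}$ vanish on $\partial\Omega$, so $B_{S}(\cdot)$ vanishes on $\Gamma$; hence $\mathcal{S}=0$ and
\[
\mathcal{R}=-\int_{\Omega}\big(\nu_j|\nabla\ubn{n+1/2}|^2+s\gamma_j|\nabla\bbn{n+1/2}|^2\big)\,d\Omega=: -D_j^n\le 0
\]
because $\nu_j,\gamma_j,s>0$. The identity therefore reads $\mathcal{F}(\run{n+1})=\mathcal{F}(\run{n})-\Delta t\,\xi_j D_j^n$, which I would pair with the definition $\mathcal{F}(\run{n+1})=\xi_j\,E(\ubn{n+1},\bbn{n+1})$ and the lower bound $E(\ubn{n+1},\bbn{n+1})\ge C_0>0$ built into \eqref{shiftEner}.

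The induction is then short. Base case: $\mathcal{F}(\run{0})=\tfrac12\norm{\un{0}}^2+\tfrac{s}{2}\norm{\bn{0}}^2+C_0>0$. Inductive step: assume $\mathcal{F}(\run{n})>0$; if $\xi_j\le 0$ then $\mathcal{F}(\run{n+1})=\xi_j E(\ubn{n+1},\bbn{n+1})\le 0$, whereas the energy identity forces $\mathcal{F}(\run{n+1})=\mathcal{F}(\run{n})+\Delta t\,|\xi_j|\,D_j^n>0$, a contradiction; hence $\xi_j>0$, and then $\mathcal{F}(\run{n+1})=\xi_j E(\ubn{n+1},\bbn{n+1})>0$ and $\mathcal{F}(\run{n+1})=\mathcal{F}(\run{n})-\Delta t\,\xi_j D_j^n\le\mathcal{F}(\run{n})$. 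This closes the induction and gives $0<\mathcal{F}(\run{n})\le\mathcal{F}(\run{0})$ for all $n\ge 0$ with no restriction on $\Delta t$ or $h$, which is the claimed unconditional stability with respect to $\mathcal{F}(R_j)$. The BDF2 algorithm \eqref{AlgoBDFD} is handled the same way, testing \eqref{algp1BDFD}--\eqref{algp2BDFD} with $\un{n+1},\bn{n+1}$ and using the scalar variable $\runbdf{n+3/2}$.

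The step I expect to be the crux is the sign argument inside the induction: positivity of $\xi_j$ is not visible from its definition $\xi_j=\mathcal{F}(\run{n+1})/E(\ubn{n+1},\bbn{n+1})$, since $\mathcal{F}(\run{n+1})$ is itself an unknown of the step (and $\ubn{n+1}$ may depend on the step's unknowns); it is the combination of the collapsed energy identity with the strict positivity $E\ge C_0>0$ that pins down the sign. Everything else --- verifying term by term that \eqref{scalarEqCND} reproduces $-\mathcal{N}_u(\un{n+1/2})-s\,\mathcal{N}_B(\bn{n+1/2})$, and that $B_{S}$ vanishes under homogeneous data --- is routine bookkeeping.
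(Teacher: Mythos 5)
Your proposal is correct and follows essentially the same route as the paper: test \eqref{algp1CND} and \eqref{algp2CND} with the half-step iterates, use the built-in cancellation in \eqref{scalarEqCND} (noting \eqref{udivfreeCND}--\eqref{bdivfreeCND}) to collapse to a scalar relation, drop the forcing/boundary terms under the stated hypotheses, and conclude $0<\mathcal{F}(\run{n+1})\le\mathcal{F}(\run{n})$ by induction with no restriction on $\Delta t$ or $h$. The only cosmetic difference is your final sign argument by contradiction on $\xi_j$, where the paper simply substitutes $\xi_j=\mathcal{F}(\run{n+1})/E(\ubn{n+1},\bbn{n+1})$ and solves the resulting linear relation to get $\mathcal{F}(\run{n+1})=\mathcal{F}(\run{n})\big/\bigl(1+\tfrac{\Delta t}{E}\,D_j^n\bigr)$, which yields positivity and decay in one step.
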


\begin{proof}
Stability follows directly from \cite{YaDo2020}. Set $\boldsymbol{v_h}$ to $\un{n+1/2}$ in \eqref{algp1CND}, $\boldsymbol{\chi_h}$ to $s\bn{n+1/2}$ in \eqref{algp2CND}, add each of these to \eqref{scalarEqCND} and note \eqref{udivfreeCND} and \eqref{bdivfreeCND}. Then one gets
\begin{align}
	\mathcal{F}(\run{n+1}) - \mathcal{F}(\run{n}) = -\Delta t \frac{ \mathcal{F}(\run{n+1}) }{ E(\ubn{n+1}, \bbn{n+1}) } \int_{\Omega}\left(\nu_j |\nabla\ubn{n+1/2}|^2 + s\gamma_j |\nabla\bbn{n+1/2}|^2 \right)d\Omega \\
	+ \left[ 1 - \frac{ \mathcal{F}(\run{n+1}) }{ E(\ubn{n+1}, \bbn{n+1}) } \right]|S_0|\Delta t + \frac{ \mathcal{F}(\run{n+1}) }{ E(\ubn{n+1}, \bbn{n+1}) } S_0\Delta t. \nonumber
\end{align}
Where $S_0 = \int_{\Omega} \fn{n+1/2}\cdot\ubn{n+1/2} d\Omega + \int_{\Omega} s(\gn{n+1/2})\cdot\bbn{n+1/2} d\Omega$. Solving for $\mathcal{F}( \run{n+1} )$ gives
\begin{align}
	\mathcal{F}(\run{n+1}) = \frac{ \mathcal{F}(\run{n}) + |S_0|\Delta t }{ 1 + \frac{\Delta t}{E(\ubn{n+1}, \bbn{n+1}) } \left[\int_{\Omega}\left(\nu_j |\nabla\ubn{n+1/2}|^2 + s\gamma_j |\nabla\bbn{n+1/2}|^2 \right)d\Omega + (|S_0|-S_0) \right] }.
\end{align}
If $\fb = 0$ and $ \curlGb = 0$, then $S_0 = 0$ and
\begin{align}\label{fTermCN}
	\mathcal{F}(\run{n+1}) = \frac{ \mathcal{F}(\run{n}) }{ 1 + \frac{\Delta t}{E(\ubn{n+1}, \bbn{n+1}) } \int_{\Omega}\left(\nu_j |\nabla\ubn{n+1/2}|^2 + s\gamma_j |\nabla\bbn{n+1/2}|^2 \right)d\Omega }.
\end{align}
Note the denominator in \eqref{fTermCN} is greater than or equal to $1$. By definition \eqref{incFuncF}, if $\run{0} > 0$, then $\mathcal{F}(\run{0}) > 0$. In fact $\run{0}$ would be initialized as $\mathcal{G}(E[\boldsymbol{u}_j^0(x), \boldsymbol{B}_j^0(x)])$, which by definition \eqref{incFuncG} is guaranteed positive. Then by induction for any timestep $n$, $\mathcal{F}(\run{n+1}) > 0$, giving us
\begin{align}\label{Fineq}
	0 < \mathcal{F}(\run{n+1}) \leq \mathcal{F}(\run{n}),\qquad n\geq 0.
\end{align}
This completes the proof.
\end{proof}

\subsection{BDF2}\label{stabilityBDF}
\begin{theorem}
	With homogeneous boundary conditions and forcing terms equal to zero, Algorithm \eqref{AlgoBDFD} is unconditionally stable with respect to the modified energy $\mathcal{F}(R_j)$ as long as the approximations of $R_j(t)$ at timestep $\frac{1}{2}$ are positive.\\
\end{theorem}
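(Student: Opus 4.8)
The plan is to reproduce the Crank--Nicolson argument of the preceding theorem essentially line for line; the only genuinely new point is how the two-step structure of BDF2 interacts with positivity of the auxiliary scalar. First I would test the discrete momentum equation \eqref{algp1BDFD} with $\boldsymbol{v}_h=\un{n+1}$ and the discrete induction equation \eqref{algp2BDFD} with $\boldsymbol{\chi}_h=s\bn{n+1}$, and then add the two resulting identities to the scalar relation \eqref{scalarEqBDFD}. Relation \eqref{scalarEqBDFD} has been assembled precisely so that this combination telescopes: the discrete solenoidal constraints \eqref{udivfreeBDFD} and \eqref{bdivfreeBDFD}, used with $l_h=\pn{n+1}\in Q_h$ and $\psi_h=\lamn{n+1}\in Q_h$, annihilate the pressure and Lagrange-multiplier terms; the explicit skew-symmetry of $b^{*}(\cdot,\cdot,\cdot)$ annihilates the convective and Lorentz-coupling terms $\pm\xi_j\bs{\utt}{\utt}{\cdot}$, $\pm s\xi_j\bs{\btt}{\btt}{\cdot}$, $\pm s\xi_j\bs{\btt}{\utt}{\cdot}$, $\pm s\xi_j\bs{\utt}{\btt}{\cdot}$; and the mean and fluctuation diffusion terms $\nuM,\gamm,\nuP,\gamP$, together with the artificial-viscosity regularizations $\alpha h$ and $\alpha_M h$, cancel against their twins in \eqref{scalarEqBDFD}. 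This is exactly the cancellation mechanism of \cite{YaDo2020} invoked in the Crank--Nicolson proof, so no new inequality is needed here.

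After these cancellations, and using that homogeneous boundary data makes the boundary functional $\int_{\Gamma}B_{S}(\cdots)\,d\Gamma$ vanish while zero forcing makes the remaining domain source terms vanish (so the quantity $S_0$ from the Crank--Nicolson proof is zero), relation \eqref{scalarEqBDFD} collapses to
\[
\mathcal{F}(\rubdf{n+3/2})-\mathcal{F}(\rubdf{n+1/2}) \;=\; -\,\Delta t\;\xi_j\int_{\Omega}\Bigl(\nu_j\,|\nabla\ubn{n+1}|^2+s\,\gamma_j\,|\nabla\bbn{n+1}|^2\Bigr)\,d\Omega .
\]
Inserting the definition of $\xi_j$ from Algorithm~\eqref{AlgoBDFD}, which, consistently with the semi-discrete Algorithm~\eqref{AlgoBDF2} and with \eqref{energyBDFD}, takes the self-referential form $\xi_j=\mathcal{F}(\rubdf{n+3/2})/E(\ubn{n+3/2},\bbn{n+3/2})$ with $E(\ubn{n+3/2},\bbn{n+3/2})\ge C_0>0$, makes the relation linear in the unknown $\mathcal{F}(\rubdf{n+3/2})$, and solving gives
\[
\mathcal{F}(\rubdf{n+3/2}) \;=\; \frac{\mathcal{F}(\rubdf{n+1/2})}{\;1+\dfrac{\Delta t}{E(\ubn{n+3/2},\bbn{n+3/2})}\displaystyle\int_{\Omega}\bigl(\nu_j|\nabla\ubn{n+1}|^2+s\gamma_j|\nabla\bbn{n+1}|^2\bigr)\,d\Omega\;}\,.
\]
Because the integral in the denominator is nonnegative and $E(\ubn{n+3/2},\bbn{n+3/2})\ge C_0>0$, the denominator is bounded below by $1$, so $\mathcal{F}(\rubdf{n+3/2})$ has the same sign as $\mathcal{F}(\rubdf{n+1/2})$ and is no larger.

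It remains to run the induction, and this is where the extra hypothesis is used. In the Crank--Nicolson case positivity propagates from the seed $\run{0}=\mathcal{G}\bigl(E[\boldsymbol{u}_j^{0},\boldsymbol{B}_j^{0}]\bigr)>0$, which is automatic from \eqref{incFuncG}. For the BDF2 scheme the recursion for the scalars $\mathcal{F}(\rubdf{n+3/2})$ begins one half-step higher, from the start-up quantities $\rubdf{1/2}$ (one per realization $j=1,\dots,J$), which are produced by a separate second-order one-step initialization rather than by $\mathcal{G}$; their positivity is not automatic and must be assumed, which is exactly the stated hypothesis $\mathcal{F}(\rubdf{1/2})>0$ (equivalently $\rubdf{1/2}>0$, by \eqref{incFuncF}). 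Granting this, the displayed quotient yields, by induction on $n$,
\[
0<\mathcal{F}(\rubdf{n+3/2})\le\mathcal{F}(\rubdf{n+1/2}),\qquad n\ge 0,
\]
which is the claimed unconditional stability with respect to the modified energy $\mathcal{F}(R_j)$, in exact analogy with \eqref{Fineq}.

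The difficulty here is not conceptual but bookkeeping, and that is where I expect the only real effort. One must verify term by term that every non-dissipative contribution in \eqref{scalarEqBDFD} cancels \emph{with the correct sign} against the tested momentum and induction equations, while keeping the BDF2 extrapolations $\utt=2\un{n}-\un{n-1}$, $\btt=2\bn{n}-\bn{n-1}$ and the shifted scalars $\rubdf{n+1/2}$, $\rubdf{n+3/2}$ straight; a single sign slip would destroy the clean quotient above. A secondary point worth stating explicitly is that the $\alpha h$ and $\alpha_M h$ regularization terms are tested against the very same discrete functions in \eqref{algp1BDFD}--\eqref{algp2BDFD} and in \eqref{scalarEqBDFD}, so that they cancel identically instead of leaving an uncontrolled $O(\alpha h)$ remainder.
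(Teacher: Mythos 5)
Your argument is correct and follows the paper's proof essentially line for line: test \eqref{algp1BDFD} with $\un{n+1}$ and \eqref{algp2BDFD} with $s\bn{n+1}$, combine with \eqref{scalarEqBDFD} using \eqref{udivfreeBDFD}--\eqref{bdivfreeBDFD}, solve the resulting linear relation for $\mathcal{F}(\rubdf{n+3/2})$ to get the same quotient with denominator at least $1$, and run the induction from the assumed positive start-up value $\rubdf{1/2}>0$. The only cosmetic slip is attributing the disappearance of the trilinear terms to skew-symmetry of $b^{*}$ --- they in fact cancel directly against their duplicates built into \eqref{scalarEqBDFD} --- which does not affect the conclusion.
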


\begin{proof}
	If one sets $\boldsymbol{v_h}$ to $\un{n+1}$ in \eqref{algp1BDFD} and $\boldsymbol{\chi_h}$ to $s\bn{n+1}$ in \eqref{algp2BDFD}, subtracts each of these from \eqref{scalarEqBDFD} and notes \eqref{udivfreeBDFD} and \eqref{bdivfreeBDFD}, the proof follows identically to \cite{YaDo2020}. We have
	\begin{align}
		\mathcal{F}(\runbdf{n+3/2}) - \mathcal{F}(\runbdf{n+1/2}) = -\Delta t \frac{ \mathcal{F}(\runbdf{n+3/2}) }{ E(\ubn{n+3/2}, \bbn{n+3/2}) } \int_{\Omega}\left(\nu_j |\nabla\ubn{n+1}|^2 + s\gamma_j |\nabla\bbn{n+1}|^2 \right)d\Omega \\
		+ \left[ 1 - \frac{ \mathcal{F}(\runbdf{n+3/2}) }{ E(\ubn{n+3/2}, \bbn{n+3/2}) } \right]|S_0|\Delta t + \frac{ \mathcal{F}(\runbdf{n+3/2}) }{ E(\ubn{n+3/2}, \bbn{n+3/2}) } S_0\Delta t. \nonumber
	\end{align}
	Where $S_0 = \int_{\Omega} \fn{n+1}\cdot\ubn{n+1} d\Omega + \int_{\Omega} s(\gn{n+1})\cdot\bbn{n+1} d\Omega$. Solving for $\mathcal{F}( \run{n+3/2} )$ gives
	\begin{align}
		\mathcal{F}(\runbdf{n+3/2}) = \frac{ \mathcal{F}(\runbdf{n+1/2}) + |S_0| }{1 + \frac{\Delta t}{ E(\ubn{n+3/2}, \bbn{n+3/2}) } [ \int_{\Omega}\left(\nu_j |\nabla\ubn{n+1}|^2 + s\gamma_j |\nabla\bbn{n+1}|^2 \right)d\Omega + (|S_0| - S_0) ] }.
	\end{align}
	If $\fb = 0$ and $ \curlGb = 0$, then $S_0 = 0$ and
	\begin{align}\label{fTermBDF}
		\mathcal{F}(\runbdf{n+3/2}) = \frac{ \mathcal{F}(\runbdf{n+1/2}) }{1 + \frac{\Delta t}{ E(\ubn{n+3/2}, \bbn{n+3/2}) } \int_{\Omega}\left(\nu_j |\nabla\ubn{n+1}|^2 + s\gamma_j |\nabla\bbn{n+1}|^2 \right)d\Omega }.
	\end{align}
	The denominator above is greater than or equal to $1$. Now by definition \eqref{incFuncF}, if it's ensured the approximation of $R_j(t)$ at timestep $1/2$ is positive, i.e. $\runbdf{1/2} > 0$, then $\mathcal{F}(\runbdf{1/2}) > 0$. Then by induction for any timestep $n$, $\mathcal{F}(\run{n+3/2}) > 0$ and
	\begin{align}\label{FineqBDF}
		0 < \mathcal{F}(\run{n+3/2}) \leq \mathcal{F}(\run{n+1/2}),\qquad n\geq 0.
	\end{align}
	This completes the proof.
\end{proof}

Note that for the choice of $\mathcal{F}(\chi) = \chi^2 \geq 0$ for all $\chi \in (-\infty, \infty)$, \eqref{FineqBDF} and unconditional stability will hold regardless of whether $\runbdf{1/2} > 0$.\\

\section{Implementation}
Since the schemes are linear and the auxiliary variables are scalar functions of time variable, the resulting systems can be solved conveniently by superposition of a series of Stokes-type equations. We illustrate the idea by presenting the algorithms in strong form.
\subsection{Crank-Nicolson}
To efficiently implement Algorithm \eqref{AlgoCN}, we proceed in the following manner. Assume
\begin{align*}
	\ue{n+1} &= \uhe{n+1} + \xi_j\uue{n+1}, \qquad \pe{n+1} = \phe{n+1} + \xi_j \pue{n+1}, \\
	\Be{n+1} &= \bhe{n+1} + \xi_j\bue{n+1}, \qquad \lame{n+1} = \lamhe{n+1} + \xi_j \lamue{n+1}.
\end{align*}
Then solving Algorithm \eqref{AlgoCN} is equivalent to solving the following subproblems,
\begin{algorithm}\label{effAlgCN} Given $\boldsymbol{u}_j^n$, $\boldsymbol{B}_j^n$ and $\pe{n}$,\\
	Sub-problem 1: find $\uhe{n+1}$, $\bhe{n+1}$, $\phe{n+1}$ and $\lamhe{n+1}$ satisfying
	\begin{subequations}\label{sbp1}
	\begin{align}
		&\frac{1}{\Delta t} \uhe{n+1} - \frac{\nuM}{2} \Delta \uhe{n+1}  + \frac{1}{2}\nabla \phe{n+1} = \fe{n+1/2} + \frac{1}{\Delta t} \ue{n} + \nuP \Delta \ute \\
		&\qquad + \frac{\nuM}{2} \Delta \ue{n} - \frac{1}{2}\nabla \pe{n}, \quad \nabla \cdot \uhe{n+1} = 0,\br
		&\frac{1}{\Delta t}\bhe{n+1} - \frac{\gamm}{2}\Delta \bhe{n+1} + \frac{1}{2}\nabla \lamhe{n+1} = \geNew{n+1/2} + \frac{1}{\Delta t}\Be{n} + \frac{\gamm}{2}\Delta \Be{n} \\
		&\qquad  + \gamP \Delta \bte - \frac{1}{2}\nabla \lame{n},
		\quad \nabla \cdot \bhe{n+1} = 0,\nonumber
	\end{align}
	\end{subequations}
	Sub-problem 2: find $\uue{n+1}$, $\bue{n+1}$, $\pue{n+1}$ and $\lamue{n+1}$ satisfying
	\begin{subequations}\label{sbp2}
	\begin{align}
	&\frac{1}{\Delta t} \uue{n+1} - \frac{\nuM}{2} \Delta \uue{n+1} + \frac{1}{2}\nabla \pue{n+1}=s\left(\bte\cdot\nabla\right)\bte - \left(\ute\cdot\nabla\right)\ute, \\
	& \nabla \cdot \uue{n+1} = 0,\\	
	& \frac{1}{\Delta t}\bue{n+1} + \frac{1}{2}\nabla \lamue{n+1} - \frac{\gamm}{2}\Delta \bue{n+1}
	 =  \left(\bte\cdot\nabla\right)\ute 
	 - \left(\ute\cdot\nabla\right)\bte, \\
	 & \nabla \cdot \bue{n+1} = 0.
	\end{align}
	\end{subequations}
\end{algorithm}
\begin{remark}
	For inhomogeneous Dirichlet boundary conditions, let
	\[
		\uhe{n+1} = g(x,t^{n+1}), \quad \uue{n+1} = 0, \quad \bhe{n+1} = h(x,t^{n+1}), \quad \bue{n+1} = 0 \quad \text{on } \partial\Omega.
	\]
\end{remark}
We use the following approximations,
\begin{numcases}{}
	\vhbar{n+1} = \vhe{n+1} + \vue{n+1}, \\
	\vhbar{n+1/2} = \frac{1}{2}(\vhbar{n+1} + \vh{n}).
\end{numcases}
We then update $\xi_j$ as
\begin{align}\label{scalarCN}
	\xi_j = \frac{\mathcal{F}(\ru{n}) + |S_0|\Delta t}{ E(\ubar{n+1}, \bbar{n+1}) + \Delta t \int_{\Omega}\left(\nu|\nabla\ubar{n+1/2}|^2 + s\gamma |\nabla\bbar{n+1/2}|^2 \right)d\Omega + \Delta t (|S_0|-S_0)},
\end{align}
where
\begin{align}
	S_0 = \int_{\Omega} \fe{n+1/2}\cdot\ubar{n+1/2} d\Omega+ \int_{\Omega} s(\geNew{n+1/2})\cdot\bbar{n+1/2} d\Omega + \int_{\Gamma} B_{S}( \ubar{n+1/2}, \bbar{n+1/2} ) d\Gamma.
\end{align}
Notice $\xi_j$ is updated via a linear equation and is very direct. Once we have $\xi_j$ we update
\begin{align}\label{compRCN}
	\ru{n+1} = \mathcal{G}\left( \xi_j E(\ubar{n+1}, \bbar{n+1}) \right)
\end{align}
and proceed to the next timestep iteration. Since $\xi_j$ is a ratio of the SAV to itself, we should expect the result to be close to one. With our ensemble approach in \eqref{sbp1}-\eqref{sbp2}, all $J$ realizations have the same coefficient matrix in each timestep so should be computationally efficient.\\

\begin{theorem}\label{posInCN}
	The scalar $\xi_j$ in \eqref{scalarCN} and $\ru{n+1}$ in \eqref{compRCN} are guaranteed to be positive at all timesteps.
\end{theorem}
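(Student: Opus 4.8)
The plan is to prove both assertions simultaneously by induction on the timestep index $n$, tracking the invariant ``$\ru{n} > 0$ and $\mathcal{F}(\ru{n}) > 0$''. The base case will use the initialization $\ru{0} = \mathcal{G}(E[\boldsymbol{u}_j^0, \boldsymbol{B}_j^0])$: since the shifted energy in \eqref{shiftEner} is $\frac{1}{2}\norm{\boldsymbol{u}_j^0}^2 + \frac{s}{2}\norm{\boldsymbol{B}_j^0}^2 + C_0$ with $C_0 > 0$, it is strictly positive, so \eqref{incFuncG} gives $\ru{0} > 0$ and then \eqref{incFuncF} gives $\mathcal{F}(\ru{0}) > 0$.

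For the inductive step I would assume $\ru{n} > 0$, hence $\mathcal{F}(\ru{n}) > 0$ by \eqref{incFuncF}, and then inspect formula \eqref{scalarCN} term by term. The numerator $\mathcal{F}(\ru{n}) + |S_0|\Delta t$ is strictly positive because the first term is positive by the induction hypothesis and the second is $\geq 0$. For the denominator, the leading term $E(\ubar{n+1}, \bbar{n+1}) = \frac{1}{2}\norm{\ubar{n+1}}^2 + \frac{s}{2}\norm{\bbar{n+1}}^2 + C_0$ is strictly positive (again because $C_0 > 0$); the term $\Delta t\int_{\Omega}(\nu|\nabla\ubar{n+1/2}|^2 + s\gamma|\nabla\bbar{n+1/2}|^2)\,d\Omega$ is nonnegative since the viscosity and resistivity are positive; and $\Delta t(|S_0| - S_0) \geq 0$ since $|x| \geq x$ for every real $x$. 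Thus the denominator is strictly positive and $\xi_j > 0$. Consequently $\xi_j E(\ubar{n+1}, \bbar{n+1})$ is a product of two strictly positive numbers, so \eqref{incFuncG} applied to \eqref{compRCN} yields $\ru{n+1} = \mathcal{G}(\xi_j E(\ubar{n+1}, \bbar{n+1})) > 0$, and \eqref{incFuncF} then gives $\mathcal{F}(\ru{n+1}) > 0$, closing the induction.

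I do not expect a genuine obstacle here: the statement is a direct consequence of three structural features of the scheme, namely (i) the denominator of \eqref{scalarCN} is bounded below by $E(\ubar{n+1}, \bbar{n+1})$ because the two additional contributions are each $\geq 0$, (ii) the numerator is bounded below by $\mathcal{F}(\ru{n}) > 0$, and (iii) the sign conditions \eqref{incFuncF}--\eqref{incFuncG} propagate positivity through the update \eqref{compRCN}. The only point deserving explicit mention is that \emph{strict} positivity of $E(\ubar{n+1}, \bbar{n+1})$ and of the initial energy hinges on the deliberate shift by $C_0 > 0$ in \eqref{shiftEner}; without it one would obtain only nonnegativity, which would be insufficient both to divide in \eqref{scalarCN} and to conclude $\xi_j > 0$ strictly.
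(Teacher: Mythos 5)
Your proof is correct and follows essentially the same route as the paper: induction starting from $\ru{0} = \mathcal{G}(E[\boldsymbol{u}_j^0, \boldsymbol{B}_j^0]) > 0$, sign-checking the numerator and denominator of \eqref{scalarCN} (positive energy, nonnegative gradient integral, $|S_0| - S_0 \geq 0$), and propagating positivity through \eqref{incFuncF}--\eqref{incFuncG} to get $\ru{n+1} > 0$ from \eqref{compRCN}. Your explicit remark on the role of $C_0 > 0$ for strict positivity is a slightly more careful articulation of what the paper asserts implicitly, but it is not a different argument.
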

\begin{proof}
	By definition \eqref{incFuncF}, $\mathcal{F}(\ru{0}) > 0$ so long as $\ru{0} > 0$. It's explained in \eqref{stability} that $\ru{0}$ will be positive. The energy function $E(u, B)$ is always positive and $\int_{\Omega}\left(\nu|\nabla u|^2 + s\gamma |\nabla B|^2 \right)d\Omega \geq 0$. Since $|S_0| - S_0 \geq 0$, the initially computed $\xi_j$ is ensured positive. Then by induction, $\xi_j$ at any timestep is guaranteed positive.
	
	Once it's ensured $\xi_j > 0$, from the definition \eqref{incFuncG} we can guarantee $\ru{n+1}$ in \eqref{compRCN} is positive. This completes the proof.
\end{proof}

\subsection{BDF2}
For Algorithm \eqref{AlgoBDF2}, we develop an efficient implementation with the same approach. Note solving Algorithm \eqref{AlgoBDF2} is equivalent to the following,
\begin{algorithm}\label{effAlgBDF2} Given $\boldsymbol{u}_j^n$, $\boldsymbol{B}_j^n$ and $\pe{n}$,\\
	Sub-problem 1: find $\uhe{n+1}$, $\bhe{n+1}$, $\phe{n+1}$ and $\lamhe{n+1}$ satisfying
	\begin{subequations}\label{sbp1BDF2}
	\begin{align}
	\frac{3}{2\Delta t} \uhe{n+1} - \nuM \Delta \uhe{n+1} + \nabla \phe{n+1} = \fe{n+1} + \frac{2}{\Delta t} \ue{n} - \frac{1}{2\Delta t} \ue{n-1} + \nuP \Delta, \\
	\nabla \cdot \uhe{n+1} = 0, \\
	\frac{3}{2\Delta t}\bhe{n+1} - \gamm\Delta \bhe{n+1} + \nabla \lamhe{n+1} = \geNew{n+1} + \frac{2}{\Delta t}\Be{n} - \frac{1}{2\Delta t} \Be{n-1} + \gamP \Delta,\\
	\nabla \cdot \bhe{n+1} = 0,
	\end{align}
	\end{subequations}
	Sub-problem 2: find $\uue{n+1}$, $\bue{n+1}$, $\pue{n+1}$ and $\lamue{n+1}$ satisfying
	\begin{subequations}\label{sbp2BDF2}
	\begin{align}
	&\frac{3}{2\Delta t} \uue{n+1} - \nuM \Delta \uue{n+1} + \nabla \pue{n+1} =s\left(\btte\cdot\nabla\right)\btte - \left(\utte\cdot\nabla\right)\utte, \\
	&\nabla \cdot \uhe{n+1} = 0,\\
	& \frac{3}{2\Delta t}\bue{n+1} - \gamm\Delta \bue{n+1} + \nabla \lamue{n+1}  =   \left(\btte\cdot\nabla\right)\utte  - \left(\utte\cdot\nabla\right)\btte,\\
	& \nabla \cdot \bhe{n+1} = 0.
	\end{align}
	\end{subequations}
\end{algorithm}
We use the following approximations,
\begin{numcases}{}
	\vhbarbdf{n+1} = \vhe{n+1} + \vue{n+1}, \\
	\vhbarbdf{n+3/2} = \frac{3}{2}\vhbarbdf{n+1} - \frac{1}{2}\vh{n}.
\end{numcases}
We update $\xi_j$ as
\begin{align}\label{scalarBDF}
	\xi_j = \frac{\mathcal{F}(\rubdf{n+1/2}) + |S_0|\Delta t}{ E(\ubar{n+3/2}, \bbar{n+3/2}) + \Delta t\int_{\Omega}\left(\nu|\nabla\ubar{n+1}|^2 + s\gamma |\nabla\bbar{n+1}|^2 \right)d\Omega + \Delta t (|S_0|-S_0)},
\end{align}
where
\[
	S_0 = \int_{\Omega} \fe{n+1}\cdot\ubar{n+1} d\Omega+ \int_{\Omega} s(\geNew{n+1})\cdot\bbar{n+1} d\Omega + \int_{\Gamma} B_{S}( \ubar{n+1}, \bbar{n+1} ) d\Gamma.
\]
Once we have $\xi_j$ we update $\ru{n+1}$ as follows:
\begin{numcases}{}
	\rubdf{n+3/2} = \mathcal{G}\left( \xi_j E(\ubar{n+3/2}, \bbar{n+3/2}) \right), \label{compRBDF1}\\
	\ru{n+1} = \frac{2}{3}\rubdf{n+3/2} + \frac{1}{3}\ru{n}. \label{compRBDF2}
\end{numcases}
and proceed to the next timestep iteration.\\

\begin{theorem}
	The scalar $\xi_j$ in \eqref{effAlgBDF2} and $\ru{n+1}$ in \eqref{compRBDF2} are guaranteed to be positive at all timesteps if the approximation $\rubdf{1/2} > 0$.\\
\end{theorem}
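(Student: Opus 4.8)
The plan is to follow the template of Theorem~\ref{posInCN} essentially verbatim, running two coupled inductions: one on the half-integer scalar $\rubdf{n+1/2}$ and one on the integer-step scalar $\ru{n}$. The overall structure is: (i) the denominator of the update \eqref{scalarBDF} is always strictly positive; (ii) the base case holds; (iii) the inductive step reproduces (i)--(ii) one level up.

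First I would show that the denominator of \eqref{scalarBDF} is always strictly positive, independently of the data and of $\Delta t$. Indeed $E(\ubar{n+3/2},\bbar{n+3/2}) = \tfrac12\norm{\ubar{n+3/2}}^2 + \tfrac{s}{2}\norm{\bbar{n+3/2}}^2 + C_0 \ge C_0 > 0$ since $s>0$ and $C_0>0$; the dissipation contribution $\Delta t\int_\Omega(\nu_j|\nabla\ubar{n+1}|^2 + s\gamma_j|\nabla\bbar{n+1}|^2)\,d\Omega$ is nonnegative; and $\Delta t(|S_0|-S_0)\ge 0$ because $|x|\ge x$ for every real $x$. Hence the denominator is bounded below by $E(\ubar{n+3/2},\bbar{n+3/2})>0$.

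For the base case, the hypothesis $\rubdf{1/2}>0$ together with \eqref{incFuncF} gives $\mathcal{F}(\rubdf{1/2})>0$, so the numerator $\mathcal{F}(\rubdf{1/2}) + |S_0|\Delta t$ of \eqref{scalarBDF} is strictly positive; combined with the positive denominator this yields $\xi_j>0$ at the first time level. Then the argument $\xi_j E(\ubar{n+3/2},\bbar{n+3/2})$ of $\mathcal{G}$ in \eqref{compRBDF1} is a product of two positive numbers, so \eqref{incFuncG} gives $\rubdf{3/2}>0$. For the integer-step scalar I would recall, exactly as in the proof of the Crank--Nicolson stability theorem, that $\ru{0}$ is initialized as $\mathcal{G}(E[\boldsymbol{u}_j^0,\boldsymbol{B}_j^0])$, which is positive by \eqref{incFuncG}; hence $\ru{1} = \tfrac23\rubdf{3/2} + \tfrac13\ru{0}$ in \eqref{compRBDF2} is a convex combination (coefficients $\tfrac23,\tfrac13>0$ summing to $1$) of positive numbers and is therefore positive.

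The inductive step then just repeats these observations: assuming $\rubdf{n+1/2}>0$ and $\ru{n}>0$, positivity of $\mathcal{F}(\rubdf{n+1/2})$ makes the numerator of \eqref{scalarBDF} positive, the denominator is positive by the first step, so $\xi_j>0$; then \eqref{compRBDF1} with \eqref{incFuncG} gives $\rubdf{n+3/2}>0$, and \eqref{compRBDF2} gives $\ru{n+1} = \tfrac23\rubdf{n+3/2} + \tfrac13\ru{n}>0$. Since $\rubdf{n+3/2}$ is precisely the half-integer scalar entering the next iteration, both inductions close simultaneously, which proves the claim. I do not expect a real obstacle here; the only genuinely nontrivial ingredient is the base case for the half-integer variable. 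Unlike the Crank--Nicolson scheme, where $\ru{0}$ comes directly from $\mathcal{G}$ applied to the initial energy and is automatically positive, the BDF2 scheme requires a startup value $\rubdf{1/2}$, whose positivity must either be assumed (as in the statement) or be guaranteed by whatever one-step bootstrapping procedure generates the first time level; this is the same caveat, for the same reason, as in the BDF2 stability theorem of Section~\ref{stabilityBDF}. Everything else is routine sign-chasing.
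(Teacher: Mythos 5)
Your proposal is correct and follows essentially the same route as the paper: positivity of the numerator via $\mathcal{F}(\rubdf{1/2})>0$ and $|S_0|\Delta t\ge 0$, positivity of the denominator via $E\ge C_0>0$, nonnegative dissipation and $|S_0|-S_0\ge 0$, then induction with \eqref{incFuncG} giving $\rubdf{n+3/2}>0$ and the positive initialization $\ru{0}=\mathcal{G}(E[\boldsymbol{u}_j^0,\boldsymbol{B}_j^0])$ propagating through the convex combination \eqref{compRBDF2}. You merely make explicit the double induction and the denominator check that the paper delegates to the Crank--Nicolson case (Theorem~\ref{posInCN}); no substantive difference.
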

\begin{proof}
	Again by definition \eqref{incFuncF}, $\mathcal{F}(\rubdf{1/2}) > 0$ so long as approximation $\rubdf{1/2} > 0$. The argument for positivity of $\xi_j$ proceeds identically to that made in the proof of Theorem \eqref{posInCN}.
	
	Once it's ensured $\xi_j > 0$, again from definition \eqref{incFuncG} we can guarantee $\rubdf{n+3/2}$ in \eqref{compRBDF1} is positive. It's also guaranteed $\ru{0}$ is positive from the previously stated point that it would be initialized as $\mathcal{G}(E(\boldsymbol{u}_j^0(x), \boldsymbol{B}_j^0(x)))$. Thus we conclude $\ru{n+1}$ in \eqref{compRBDF2} remains positive. This completes the proof.
\end{proof}

\section{Numerical tests}\label{numTestSec}
This section will present numerical results for Algorithms \eqref{AlgoCND} and \eqref{AlgoBDFD} to demonstrate the expected convergence rates and the stability proven previously. We set $\mathcal{F}(\chi) = \chi^2$ and the corresponding $\mathcal{G}(\chi) = \sqrt{\chi}$ in every experiment. Throughout these tests we'll use the finite element triplet $(P^2–P^1–P^2)$, and the finite element software package FEniCS \cite{FENICS}.\\

\subsection{Convergence Test}\label{convT} To verify the expected convergence rates, we will use a variation of the test problem in \cite{decoupMHD}. Take the time interval $0\leq t \leq 1$ and domain $\Omega=[0,1]^2$. Define the true solution $(u,p,B)$ as
\[
	\begin{cases}
		u_{\epsilon} = \left( y^5 + t^2 , x^5 + t^2 \right)(1+\epsilon),\\
		p_{\epsilon} = 10(2x-1)(2y-1)(1+t^2)(1+\epsilon),\\
		B_{\epsilon} = \left( \sin{(\pi y)} + t^2, \sin{(\pi x)} + t^2 \right)(1+\epsilon),
	\end{cases}
\]
where $\epsilon$ is a given perturbation. For this problem we will consider two perturbations $\epsilon_1 = 10^{-1}$ and $\epsilon_2 = -10^{-1}$. The kinematic viscosity and magnetic resistivity are defined as $\nu_{\epsilon} = 0.5 \cdot (1+\epsilon)$ and $\gamma_{\epsilon} = 0.5 \cdot (1+\epsilon)$. The source terms and initial conditions correspond with the exact solution for the given perturbation. The results are displayed in tables \eqref{tableu1conv}-\eqref{tableB2conv} both with regularization and without ($\alpha = \alpha_M=0)$.

\begin{table}[H]
	\centering
	\caption{Crank-Nicolson error and convergence rates for the first ensemble member in $u_h$ and $\nabla u_h$.}
	\label{tableu1conv}
	\begin{tabular}{|c|c|c|c|c|c|}
		\hline
		\rowcolor{lightgray}
		h & $\Delta t$ & $\norm{u_1-u_{1,h}}_{\infty,0}$ & Rate & $\norm{\nabla u_1-\nabla u_{1,h}}_{2,0}$ & Rate \\
		\hline
		1/10 & 1/8 & 9.191 e-4 & --- & 4.985 e-3 & --- \\  
		1/20 & 1/16 & 2.088 e-4 & 2.138 & 1.399 e-3 & 1.834\\
		1/40 & 1/32 &  4.810 e-5 & 2.118 & 3.679 e-4 & 1.927\\
		1/80 & 1/64 & 1.154 e-5 & 2.060 & 9.422 e-5 & 1.965\\
		1/160 & 1/128 & 2.889 e-6 & 1.998 & 2.384 e-5 & 1.983
		\\
		\hline
		\rowcolor{lightgray}
		\multicolumn{6}{l}{\hfil \qquad \qquad \qquad \qquad \quad Reg with  $\alpha = \alpha_M = 0.5$} \\
		\hline
		1/10 & 1/8 & 3.912 e-4 & --- & 4.741 e-3 & --- \\  
		1/20 & 1/16 & 6.032 e-5 & 2.697 & 1.355 e-3 & 1.807\\
		1/40 & 1/32 &  9.532 e-6 & 2.662 & 3.579 e-4 & 1.920\\
		1/80 & 1/64 & 2.208 e-6 & 2.110 & 9.179 e-5 & 1.963
		\\
		\hline
	\end{tabular}
\end{table}
\begin{table}[H]
	\centering
	\caption{Crank-Nicolson error and convergence rates for the first ensemble member in $B_h$ and $\nabla B_h$.}
	\label{tableB1conv}
	\begin{tabular}{|c|c|c|c|c|c|}
		\hline
		\rowcolor{lightgray}
		h & $\Delta t$ & $\norm{B_1 - B_{1,h}}_{\infty,0}$ & Rate & $\norm{\nabla B_1 - \nabla B_{1,h}}_{2,0}$ & Rate \\
		\hline
		1/10 & 1/8 & 2.566 e-4 & --- & 3.013 e-3 & ---\\  
		1/20 & 1/16 & 5.0568 e-5 & 2.343 & 8.451 e-4 & 1.834\\
		1/40 & 1/32 &  1.150 e-5 & 2.136 & 2.223 e-4 & 1.927\\
		1/80 & 1/64 & 2.746 e-6 & 2.067 & 5.694 e-5 & 1.965\\
		1/160 & 1/128 & 6.869 e-7 & 1.999 & 1.440 e-5 & 1.983
		\\
		\hline
		\rowcolor{lightgray}
		\multicolumn{6}{l}{\hfil \qquad \qquad \qquad \qquad \quad Reg with  $\alpha = \alpha_M = 0.5$} \\
		\hline
		1/10 & 1/8 & 1.512 e-4 & --- & 2.909 e-3 & --- \\  
		1/20 & 1/16 & 2.138 e-5 & 2.822 & 8.298 e-4 & 1.810\\
		1/40 & 1/32 &  3.082 e-6 & 2.795 & 2.191 e-4 & 1.921\\
		1/80 & 1/64 & 6.830 e-7 & 2.174 & 5.619 e-5 & 1.964
		\\
		\hline        
	\end{tabular}
\end{table}
\begin{table}[H]
	\centering
	\caption{Crank-Nicolson error and convergence rates for the second ensemble member in $u_h$ and $\nabla u_h$.}
	\label{tableu2conv}
	\begin{tabular}{|c|c|c|c|c|c|}
		\hline
		\rowcolor{lightgray}
		h & $\Delta t$ & $\norm{u_2-u_{2,h}}_{\infty,0}$ & Rate & $\norm{\nabla u_2-\nabla u_{2,h}}_{2,0}$ & Rate \\
		\hline
		1/10 & 1/8 & 2.020 e-3 & --- & 5.498 e-3 & ---\\  
		1/20 & 1/16 & 4.897 e-4 & 2.045 & 1.433 e-3 & 1.940\\
		1/40 & 1/32 & 9.342 e-5 & 2.390 & 3.701 e-4 & 1.953\\
		1/80 & 1/64 & 1.560 e-5 & 2.582 & 9.440 e-5 & 1.971\\
		1/160 & 1/128 & 2.923 e-6 & 2.416 & 2.385 e-5 & 1.985
		\\
		\hline
		\rowcolor{lightgray}
		\multicolumn{6}{l}{\hfil \qquad \qquad \qquad \qquad \quad Reg with  $\alpha = \alpha_M = 0.5$} \\
		\hline
		1/10 & 1/8 & 4.070 e-4 & --- & 4.753 e-3 & --- \\  
		1/20 & 1/16 & 6.277 e-5 & 2.697 & 1.357 e-3 & 1.809\\
		1/40 & 1/32 & 1.134 e-5 & 2.469 & 3.584 e-4 & 1.921\\
		1/80 & 1/64 & 2.649 e-6 & 2.097 & 9.190 e-5 & 1.964
		\\
		\hline          
	\end{tabular}
\end{table}
\begin{table}[H]
	\centering
	\caption{Crank-Nicolson error and convergence rates for the second ensemble member in $B_h$ and $\nabla B_h$.}
	\label{tableB2conv}
	\begin{tabular}{|c|c|c|c|c|c|}
		\hline
		\rowcolor{lightgray}
		h & $\Delta t$ & $\norm{B_2-B_{2,h}}_{\infty,0}$ & Rate & $\norm{\nabla B_2-\nabla B_{2,h}}_{2,0}$ & Rate \\
		\hline
		1/10 & 1/8 & 7.455 e-4 & --- & 3.376 e-3 & ---\\  
		1/20 & 1/16 & 1.666 e-4 & 2.162 & 8.700 e-4 & 1.956\\
		1/40 & 1/32 & 3.097 e-5 & 2.427 & 2.239 e-4 & 1.958\\
		1/80 & 1/64 & 5.113 e-6 & 2.598 & 5.706 e-5 & 1.973\\
		1/160 & 1/128 & 7.772 e-7 & 2.718 & 1.442 e-5 & 1.985
		\\
		\hline
		\rowcolor{lightgray}
		\multicolumn{6}{l}{\hfil \qquad \qquad \qquad \qquad \quad Reg with  $\alpha = \alpha_M = 0.5$} \\
		\hline
		1/10 & 1/8 & 1.567 e-4 & --- & 2.915 e-3 & --- \\  
		1/20 & 1/16 & 2.222 e-5 & 2.818 & 8.308 e-4 & 1.811\\
		1/40 & 1/32 &  3.664 e-6 & 2.600 & 2.193 e-4 & 1.922\\
		1/80 & 1/64 & 8.188 e-7 & 2.162 & 5.622 e-5 & 1.964
		\\
		\hline    
	\end{tabular}
\end{table}

\begin{table}[H]
	\centering
	\caption{BDF2 error and convergence rates for the first ensemble member in $u_h$ and $\nabla u_h$.}
	\label{tableu1convBDF2}
	\begin{tabular}{|c|c|c|c|c|c|}
		\hline
		\rowcolor{lightgray}
		h & $\Delta t$ & $\norm{u_1-u_{1,h}}_{\infty,0}$ & Rate & $\norm{\nabla u_1-\nabla u_{1,h}}_{2,0}$ & Rate \\
		\hline
		1/10 & 1/8 & 7.413 e-4 & --- & 5.804 e-3 & --- \\  
		1/20 & 1/16 & 1.891 e-4 & 1.971 & 1.495 e-3 & 1.957\\
		1/40 & 1/32 & 4.790 e-5 & 1.981 & 3.793 e-4 & 1.978\\
		1/80 & 1/64 & 1.183 e-5 & 2.018 & 9.557 e-5 & 1.989\\
		1/160 & 1/128 & 2.944 e-6 & 2.006 & 2.399 e-5 & 1.994
		\\
		\hline    
		\rowcolor{lightgray}
		\multicolumn{6}{l}{\hfil \qquad \qquad \qquad \qquad \quad Reg with  $\alpha = \alpha_M = 0.5$} \\
		\hline
		1/10 & 1/8 & 4.528 e-4 & --- & 5.601 e-3 & --- \\  
		1/20 & 1/16 & 6.215 e-5 & 2.865 & 1.453 e-3 & 1.947\\
		1/40 & 1/32 &  7.946 e-6 & 2.968 & 3.694 e-4 & 1.976\\
		1/80 & 1/64 & 1.339 e-6 & 2.570 & 9.310 e-5 & 1.988
		\\
		\hline          
	\end{tabular}
\end{table}
\begin{table}[H]
	\centering
	\caption{BDF2 error and convergence rates for the first ensemble member in $B_h$ and $\nabla B_h$.}
	\label{tableB1convBDF2}
	\begin{tabular}{|c|c|c|c|c|c|}
		\hline
		\rowcolor{lightgray}
		h & $\Delta t$ & $\norm{B_1 - B_{1,h}}_{\infty,0}$ & Rate & $\norm{\nabla B_1 - \nabla B_{1,h}}_{2,0}$ & Rate \\
		\hline
		1/10 & 1/8 & 1.868 e-4 & --- & 3.502 e-3 & ---\\  
		1/20 & 1/16 & 3.792 e-5 & 2.301 & 9.005 e-4 & 1.960\\
		1/40 & 1/32 &  9.133 e-6 & 2.054 & 2.285 e-4 & 1.979\\
		1/80 & 1/64 & 2.300 e-6 & 1.990 & 5.756 e-5 & 1.989\\
		1/160 & 1/128 & 5.816 e-7 & 1.983 & 1.445 e-5 & 1.994
		\\
		\hline       
		\rowcolor{lightgray}
		\multicolumn{6}{l}{\hfil \qquad \qquad \qquad \qquad \quad Reg with  $\alpha = \alpha_M = 0.5$} \\
		\hline
		1/10 & 1/8 & 1.649 e-4 & --- & 3.438 e-3 & --- \\  
		1/20 & 1/16 & 2.185 e-5 & 2.916 & 8.904 e-4 & 1.949\\
		1/40 & 1/32 &  2.772 e-6 & 2.978 & 2.263 e-4 & 1.976\\
		1/80 & 1/64 & 4.182 e-7 & 2.729 & 5.705 e-5 & 1.988
		\\
		\hline       
	\end{tabular}
\end{table}
\begin{table}[H]
	\centering
	\caption{BDF2 error and convergence rates for the second ensemble member in $u_h$ and $\nabla u_h$.}
	\label{tableu2convBDF2}
	\begin{tabular}{|c|c|c|c|c|c|}
		\hline
		\rowcolor{lightgray}
		h & $\Delta t$ & $\norm{u_2-u_{2,h}}_{\infty,0}$ & Rate & $\norm{\nabla u_2-\nabla u_{2,h}}_{2,0}$ & Rate \\
		\hline
		1/10 & 1/8 & 7.762 e-4 & --- & 5.806 e-3 & ---\\  
		1/20 & 1/16 & 1.880 e-4 & 2.045 & 1.495 e-3 & 1.957 \\
		1/40 & 1/32 & 4.699 e-5 & 2.001 & 3.795 e-4 & 1.978 \\
		1/80 & 1/64 & 1.186 e-5 & 1.987 & 9.561 e-5 & 1.989 \\
		1/160 & 1/128 & 2.964 e-6 & 2.001 & 2.400 e-5 & 1.994 
		\\
		\hline         
		\rowcolor{lightgray}
		\multicolumn{6}{l}{\hfil \qquad \qquad \qquad \qquad \quad Reg with  $\alpha = \alpha_M = 0.5$} \\
		\hline
		1/10 & 1/8 & 4.531 e-4 & --- & 5.603 e-3 & --- \\  
		1/20 & 1/16 & 6.218 e-5 & 2.865 & 1.453 e-3 & 1.947\\
		1/40 & 1/32 &  7.964 e-6 & 2.965 & 3.695 e-4 & 1.976\\
		1/80 & 1/64 & 1.547 e-6 & 2.364 & 9.314 e-5 & 1.988
		\\
		\hline     
	\end{tabular}
\end{table}
\begin{table}[H]
	\centering
	\caption{BDF2 error and convergence rates for the second ensemble member in $B_h$ and $\nabla B_h$.}
	\label{tableB2convBDF2}
	\begin{tabular}{|c|c|c|c|c|c|}
		\hline
		\rowcolor{lightgray}
		h & $\Delta t$ & $\norm{B_2-B_{2,h}}_{\infty,0}$ & Rate & $\norm{\nabla B_2-\nabla B_{2,h}}_{2,0}$ & Rate \\
		\hline
		1/10 & 1/8 & 1.918 e-4 & --- & 3.505 e-3 & ---\\  
		1/20 & 1/16 & 3.930 e-5 & 2.287 & 9.013 e-4 & 1.960\\
		1/40 & 1/32 & 9.605 e-6 & 2.033 & 2.287 e-4 & 1.979\\
		1/80 & 1/64 & 2.425 e-6 & 1.986 & 5.761 e-5 & 1.989\\
		1/160 & 1/128 & 6.129 e-7 & 1.984 & 1.446 e-5 & 1.994
		\\
		\hline        
		\rowcolor{lightgray}
		\multicolumn{6}{l}{\hfil \qquad \qquad \qquad \qquad \quad Reg with  $\alpha = \alpha_M = 0.5$} \\
		\hline
		1/10 & 1/8 & 1.649 e-3 & --- & 3.439 e-3 & --- \\  
		1/20 & 1/16 & 2.185 e-4 & 2.916 & 8.906 e-4 & 1.949\\
		1/40 & 1/32 &  2.772 e-4 & 2.978 & 2.264 e-4 & 1.976\\
		1/80 & 1/64 & 4.880 e-5 & 2.506 & 5.706 e-5 & 1.988
		\\
		\hline      
	\end{tabular}
\end{table}

\subsection{Stability}
Here we analyze the stability of the second order ensemble methods. For the test problem, we will exclude external energy and body forces so that in observation if the method is stable, the system energy should decay to zero as time passes. We also use the initial conditions,
\[
	\begin{cases}
		u_{\epsilon}^0 = ( x^2(x-1)^2 y(y-1)(2y - 1)(1+\epsilon), -y^2(y-1)^2 x(x-1)(2x - 1)(1+\epsilon) ), \\
		p_{\epsilon}^0 = 0,\\
		B_{\epsilon}^0 = ( \sin{(\pi x)}\cos{(\pi y)}(1+\epsilon), -\sin{(\pi y)}\cos{(\pi x)} ).
	\end{cases}
\]
We fix the coupling term $s = 1$ and choose two different sets of viscosity and magnetic viscosity to test, $\nu = \gamma = 0.1$ and $\nu = \gamma = 0.02$. The mesh discretization is fixed at $h = 1/50$ and several time steps are employed, with final time $T = 5$.

\begin{figure}[H] 
  \subfloat[Decay of total system energy to $T=5$ for Algorithm \eqref{AlgoCND} with $\nu = \gamma = 0.1$.\label{CN_stability_fig1}]{%
    \includegraphics[scale=0.45]{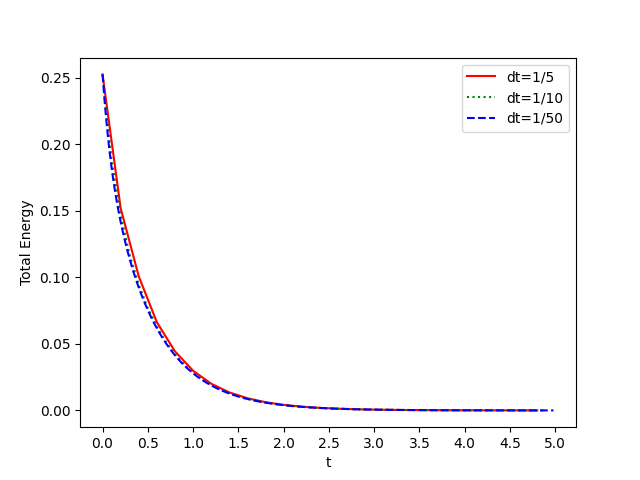} 
  } 
  \qquad\qquad
  \subfloat[Decay of total system energy to $T=5$ for Algorithm \eqref{AlgoCND} with $\nu = \gamma = 0.02$.\label{CN_stability_fig2}]{%
    \includegraphics[scale=0.45]{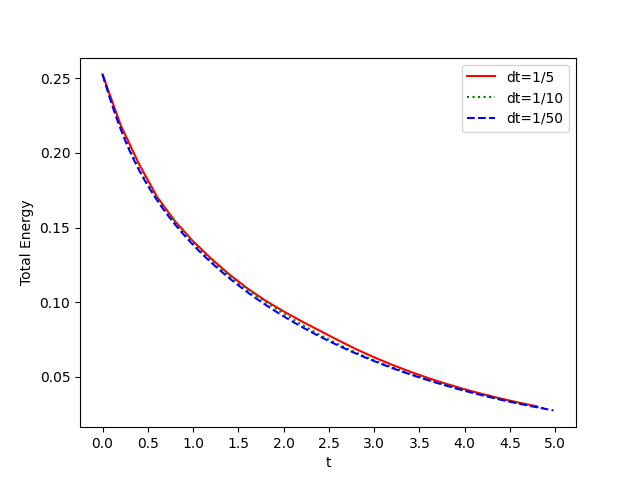} 
  }
\end{figure}

\begin{figure}[H] 
  \subfloat[Decay of total system energy to $T=5$ for Algorithm \eqref{AlgoBDFD} with $\nu = \gamma = 0.1$.\label{BDF_stability_fig1}]{%
    \includegraphics[scale=0.45]{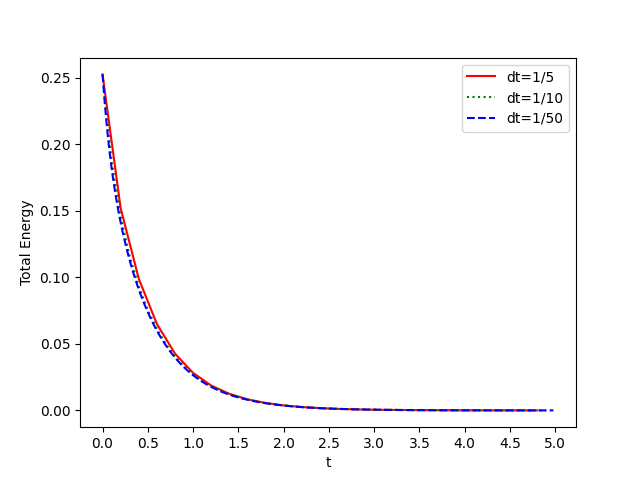} 
  } 
  \qquad\qquad
  \subfloat[Decay of total system energy to $T=5$ for Algorithm \eqref{AlgoBDFD} with $\nu = \gamma = 0.02$.\label{BDF_stability_fig2}]{%
    \includegraphics[scale=0.45]{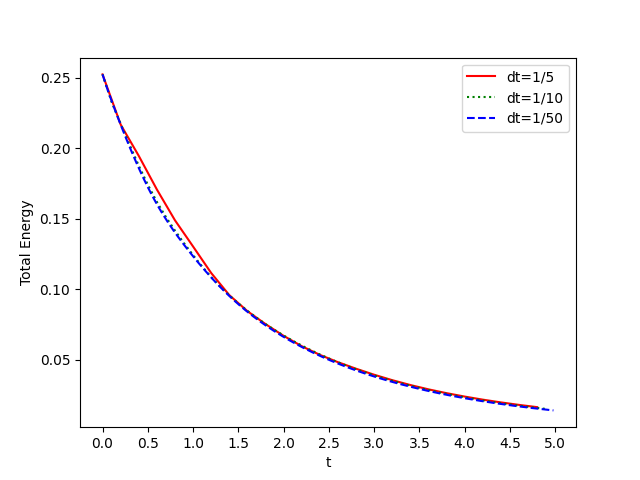} 
  }
\end{figure}

\subsection{Chamber Flow}
In this numerical test, we consider a channel flow in a rectangular domain of length 2.2 units and height 0.41, with a cylinder of radius $0.05$ centered at $(0.2, 0.2)$, in the presence of a magnetic field. On the walls and around the cylinder, a no-slip boundary condition is applied for velocity while magnetic field is kept constant as $B = < 0, 0.1 >^T$. We set the inflow and outflow conditions equal, choosing $u = < 6 y (0.41-y)/°0.41^2 \sin{(\pi t/16.0)}, 0 >^T$ and $B = <0,0.1>^T$. The coupling term is set to $s = 0.01$ and for all realizations we fix $\gamma = 0.1$ then consider two cases, $\nu = 1/50$ and $\nu = 1/1000$.

We'll use an ensemble of two different solutions with the initial and boundary conditions perturbed by multiplicative factors of $(1 \pm \epsilon)$. We simulate the flow with Algorithms \eqref{AlgoCND} and \eqref{AlgoBDFD} till final time $T= 8.8$ with a mesh discretization fixed at $h = 1/100$. We set $\alpha = \alpha_M = 0$ such that these tests are performed without the regularization terms involved. In order to maintain accurate results up unto $T=8.8$, we find it necessary to choose a time step of roughly $\Delta t = 1/1000$ when $\nu = 1/50$ and $\Delta t = 1/2000$ when $\nu = 1/1000$. The solutions under each perturbation for velocity are shown in (\ref{CN_flow_fig1})-(\ref{BDF2_flow_fig2}) and for magnetic field in Figures (\ref{CN_mag_fig1})-(\ref{BDF2_mag_fig2}). We also provide results of a traditional scheme with no perturbation, i.e. $\epsilon = 0$,
\begin{subequations} \label{primitive}
\begin{align}
		\frac{1}{\Delta t} \left(\un{n+1}, \boldsymbol{v_h}\right) + \nu \left(\nabla \un{n+1}, \boldsymbol{v_h}\right) + \alpha h \left(\nabla \un{n+1}, \nabla \boldsymbol{v_h}\right) \label{primS1}\\
		\qquad + \bs{\un{n}}{\un{n+1}}{\boldsymbol{v_h}} -s\bs{\bn{n}}{\bn{n+1}}{\boldsymbol{v_h}} - \left(\pn{n+1}, \nabla \cdot \boldsymbol{v_h} \right) = \frac{1}{\Delta t}\left(\un{n},\boldsymbol{v_h}\right) \br
		\qquad + \alpha h \left(\nabla\un{n}, \nabla \boldsymbol{v_h}\right) + \left(\fn{n+1}, \boldsymbol{v_h}\right), 
		\br
		\left(\nabla\cdot \un{n+1}, l_h\right) = 0, \label{primS2}
		\\
		\frac{1}{\Delta t} \left(\bn{n+1}, \boldsymbol{\chi_h}\right) + \gamma \left(\nabla \bn{n+1}, \nabla \boldsymbol{\chi_h}\right) + \alpha_M h \left(\nabla \bn{n+1}, \nabla \boldsymbol{\chi_h}\right) \label{primS3}\\
		\qquad + \bs{\un{n}}{\bn{n+1}}{\boldsymbol{\chi_h}} -\bs{\bn{n}}{\un{n+1}}{\boldsymbol{\chi_h}} - \left(\lamn{n+1}, \nabla \cdot \boldsymbol{\chi_h} \right) = \frac{1}{\Delta t}\left(\bn{n},\boldsymbol{\chi_h}\right) \br
		\qquad + \alpha_M h \left(\nabla\bn{n}, \nabla \boldsymbol{\chi_h}\right) + \left(\gn{n+1}, \boldsymbol{\chi_h}\right),  \br
		\left(\nabla\cdot\bn{n+1}, \psi_h\right) = 0.\label{primS4}
\end{align}
\end{subequations}
for comparison.

\begin{figure}[H] \captionsetup[subfigure]{labelformat=empty}
  \subfloat[]{%
    \includegraphics[scale=0.55]{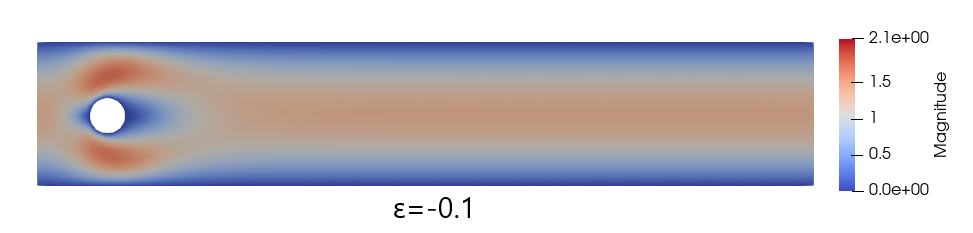} 
  } 
  \qquad\qquad
  \subfloat[]{%
    \includegraphics[scale=0.55]{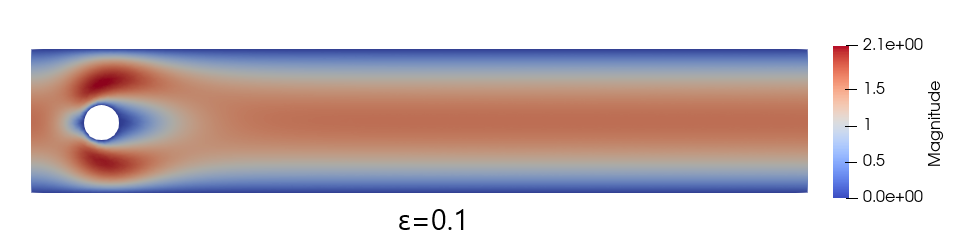} 
  } \\
  \caption{ Ensemble solutions for velocity at time $T=8.8$ for Algorithm \eqref{AlgoCND} with $\nu = 0.02$, $\gamma = 0.1$ and $\Delta t = 0.001$.\label{CN_flow_fig1} }
\end{figure}

\begin{figure}[H] \captionsetup[subfigure]{labelformat=empty}
  \subfloat[]{%
    \includegraphics[scale=0.45]{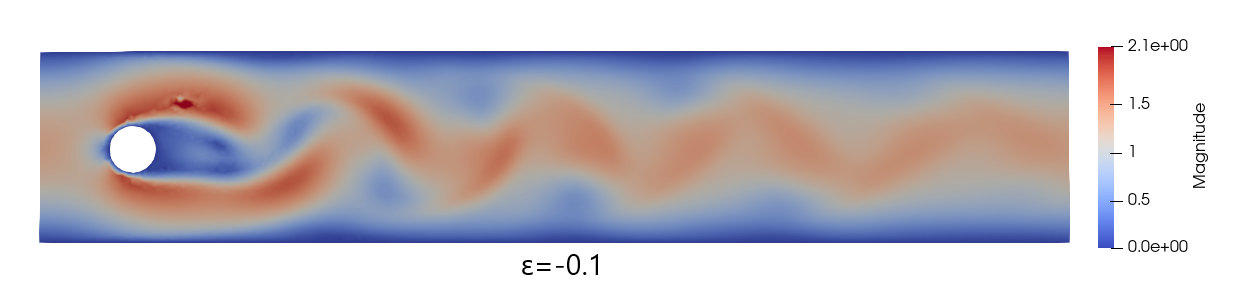} 
  } 
  \qquad\qquad
  \subfloat[]{%
    \includegraphics[scale=0.45]{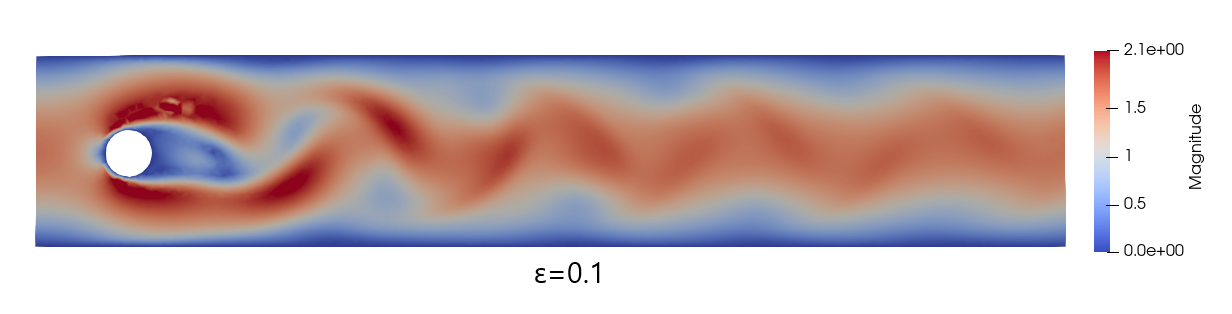} 
  } \\
  \caption{ Ensemble solutions for velocity at time $T=8.8$ for Algorithm \eqref{AlgoCND} with $\nu = 0.001$, $\gamma = 0.1$ and $\Delta t = 0.0005$.\label{CN_flow_fig2} }
\end{figure}

\begin{figure}[H] \captionsetup[subfigure]{labelformat=empty}
  \subfloat[]{%
    \includegraphics[scale=0.555]{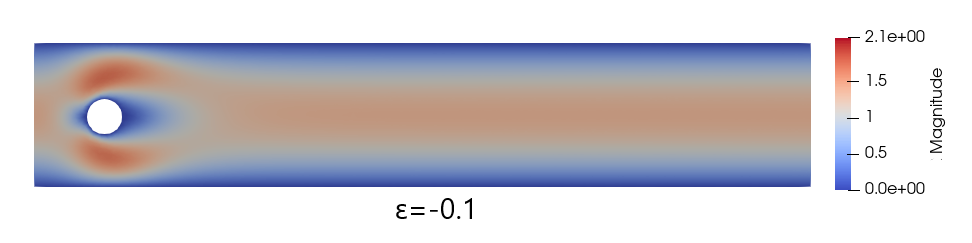} 
  } 
  \qquad\qquad
  \subfloat[]{%
    \includegraphics[scale=0.555]{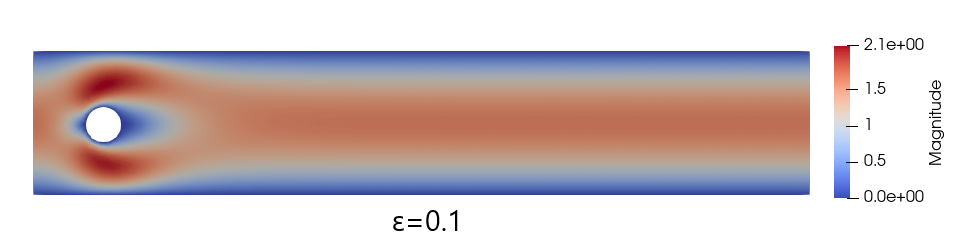} 
  } \\
  \caption{ Ensemble solutions for velocity at time $T=8.8$ for Algorithm \eqref{AlgoBDFD} with $\nu = 0.02$, $\gamma = 0.1$ and $\Delta t = 0.001$.\label{BDF2_flow_fig1} }
\end{figure}

\begin{figure}[H] \captionsetup[subfigure]{labelformat=empty}
  \subfloat[]{%
    \includegraphics[scale=0.575]{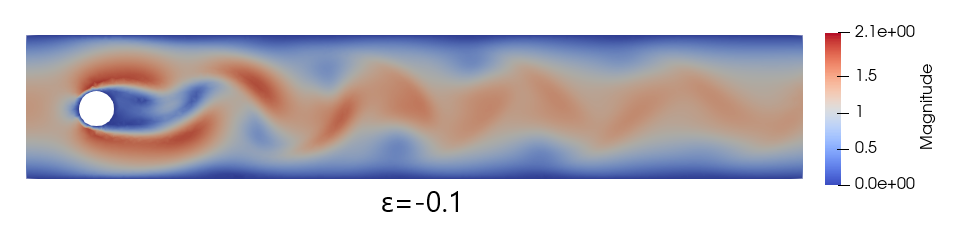} 
  } 
  \qquad\qquad
  \subfloat[]{%
    \includegraphics[scale=0.575]{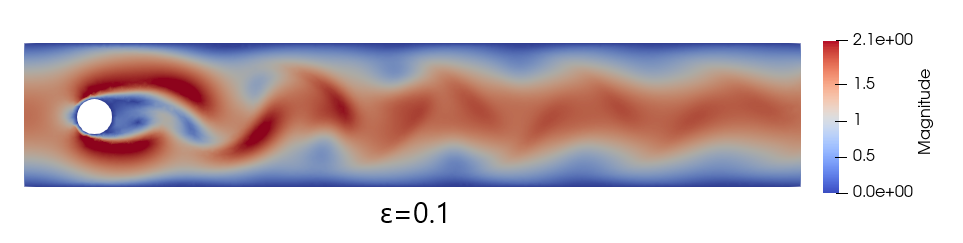} 
  }
  \caption{ Ensemble solutions for velocity at time $T=8.8$ for Algorithm \eqref{AlgoBDFD} with $\nu = 0.001$, $\gamma = 0.1$ and $\Delta t = 0.0005$.\label{BDF2_flow_fig2} \\}
\end{figure}
\begin{figure}
  \vspace{10mm}
  \subfloat[]{%
    \includegraphics[scale=0.46]{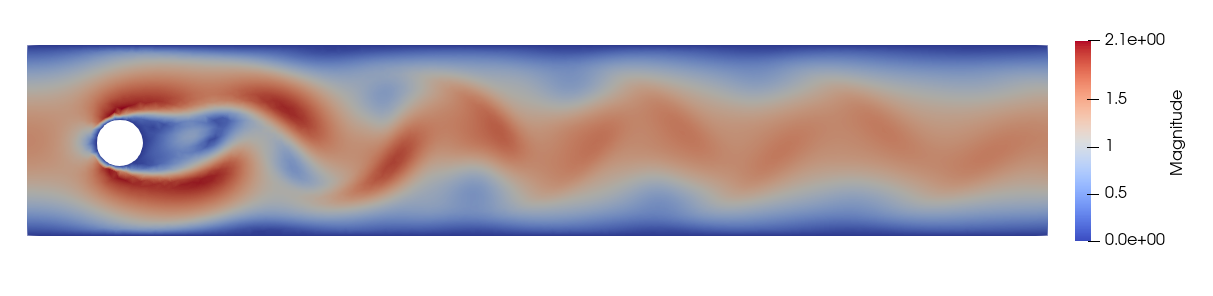} 
  } \\
  \caption{ Ensemble solutions for velocity at time $T=8.8$ for primitive scheme \eqref{primitive} with $\nu = 0.001$, $\gamma = 0.1$ and $\Delta t = 0.001$.\label{Prim_flow_fig2} }
\end{figure}


\begin{figure}[H] \captionsetup[subfigure]{labelformat=empty}
  \subfloat[]{%
    \includegraphics[scale=0.59]{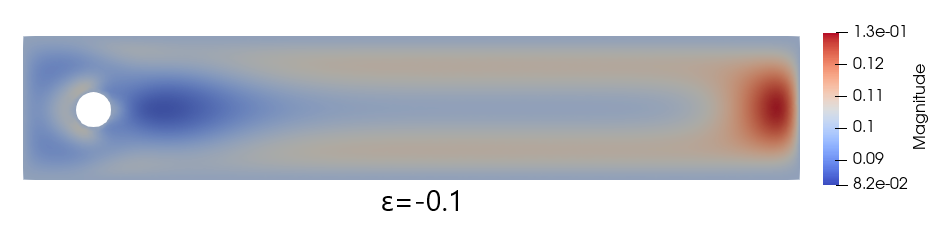} 
  } 
  \qquad\qquad
  \subfloat[]{%
    \includegraphics[scale=0.59]{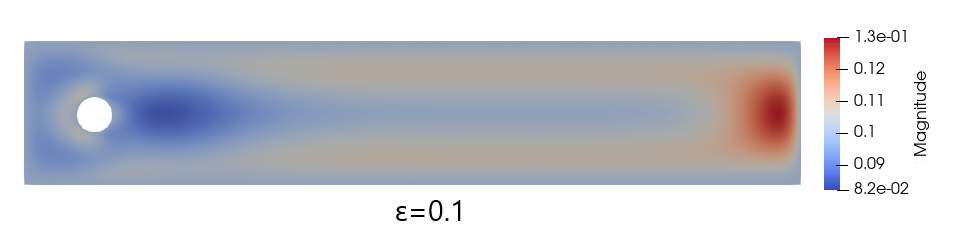} 
  } \\
  \caption{ Ensemble solutions for magnetic field at time $T=8.8$ for Algorithm \eqref{AlgoCND} with $\nu = 0.02$, $\gamma = 0.1$ and $\Delta t = 0.001$.\label{CN_mag_fig1} }
\end{figure}

\begin{figure}[H] \captionsetup[subfigure]{labelformat=empty}
  \subfloat[]{%
    \includegraphics[scale=0.44]{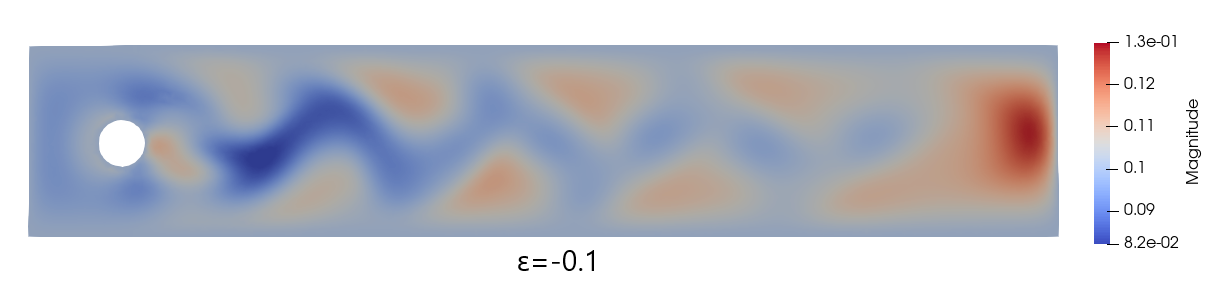} 
  } 
  \qquad\qquad
  \subfloat[]{%
    \includegraphics[scale=0.44]{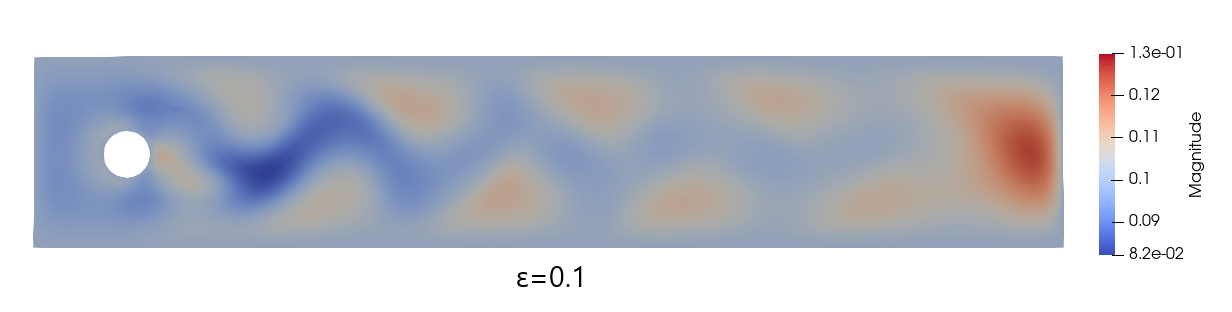} 
  } \\
  \caption{ Ensemble solutions for magnetic field at time $T=8.8$ for Algorithm \eqref{AlgoCND} with $\nu = 0.001$, $\gamma = 0.1$ and $\Delta t = 0.0005$.\label{CN_mag_fig2} }
\end{figure}

\begin{figure}[H] \captionsetup[subfigure]{labelformat=empty}
  \subfloat[]{%
    \includegraphics[scale=0.57]{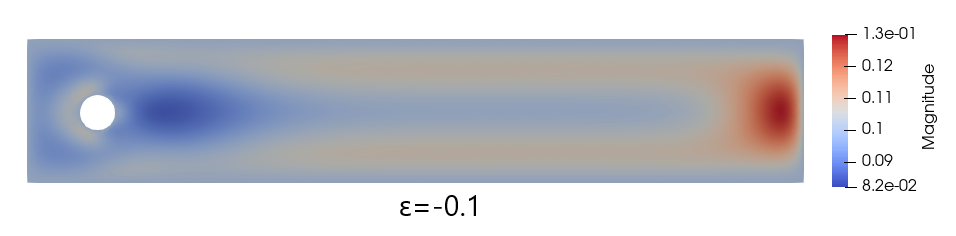} 
  } 
  \qquad\qquad
  \subfloat[]{%
    \includegraphics[scale=0.56]{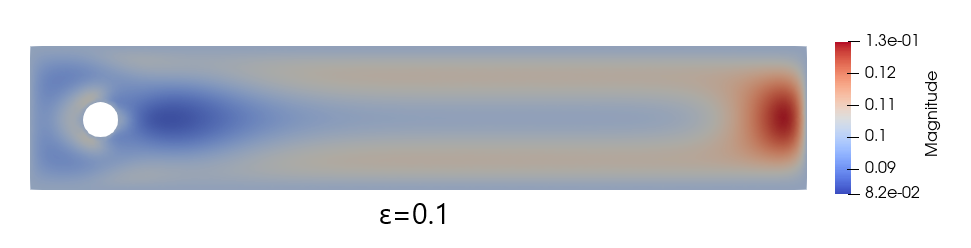} 
  } \\
  \caption{ Ensemble solutions for magnetic field at time $T=8.8$ for Algorithm \eqref{AlgoBDFD} with $\nu = 0.02$, $\gamma = 0.1$ and $\Delta t = 0.001$.\label{BDF2_mag_fig1} }
\end{figure}

\begin{figure}[H] \captionsetup[subfigure]{labelformat=empty}
  \subfloat[]{%
    \includegraphics[scale=0.575]{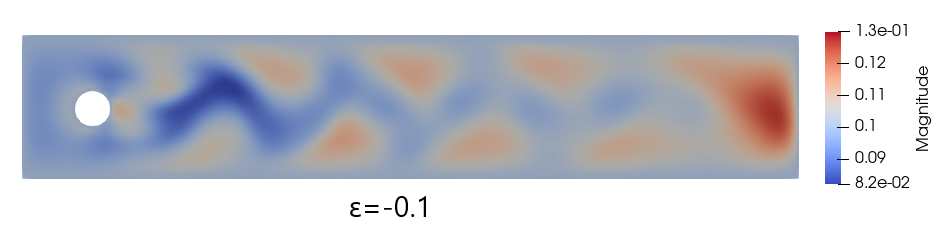} 
  } 
  \qquad\qquad
  \subfloat[]{%
    \includegraphics[scale=0.575]{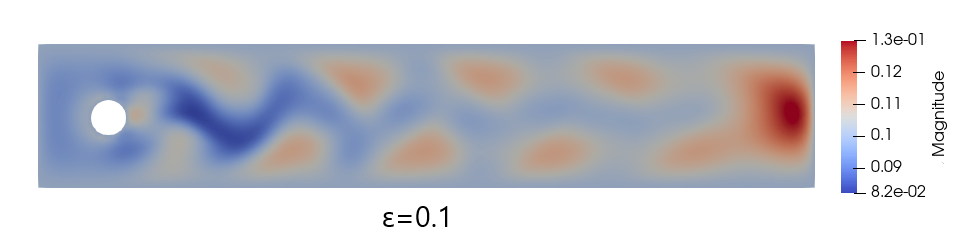} 
  } \\
  \caption{ Ensemble solutions for magnetic field at time $T=8.8$ for Algorithm \eqref{AlgoBDFD} with $\nu = 0.001$, $\gamma = 0.1$ and $\Delta t = 0.0005$.\label{BDF2_mag_fig2} }
\end{figure}


\begin{figure}[H] \captionsetup[subfigure]{labelformat=empty}
  \subfloat[]{%
    \includegraphics[scale=0.47]{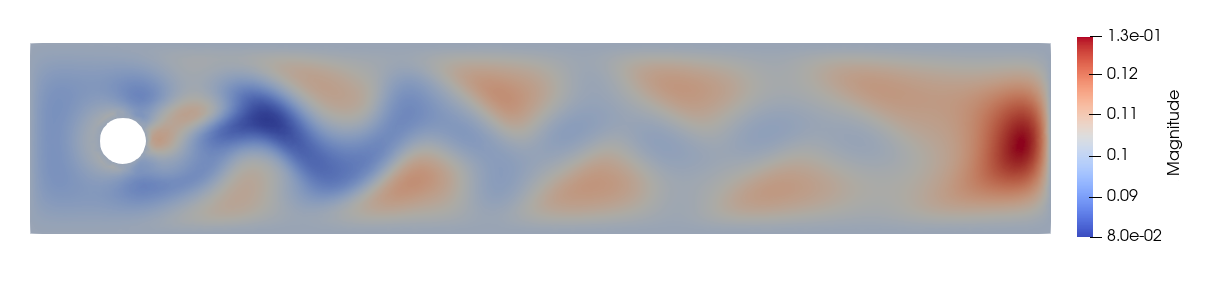} 
  } \\
  \caption{ Ensemble solutions for magnetic field at time $T=8.8$ for primitive scheme \eqref{primitive} with $\nu = 0.001$, $\gamma = 0.1$ and $\Delta t = 0.001$.\label{Prim_mag_fig} }
\end{figure}


\subsection{Chamber Flow with Regularization}\label{ChFlow}
Here we present the same chamber flow problem implementing Algorithms \eqref{AlgoCND} and \eqref{AlgoBDFD} with nonzero regularization coefficients. We choose $\alpha = \nu$ and $\alpha_M = \gamma$ in each test. We're able to achieve similar accuracy to the previous section with coarser time step. The following numerical results are achieved:

\begin{figure}[H] \captionsetup[subfigure]{labelformat=empty}
  \subfloat[]{%
    \includegraphics[scale=0.45]{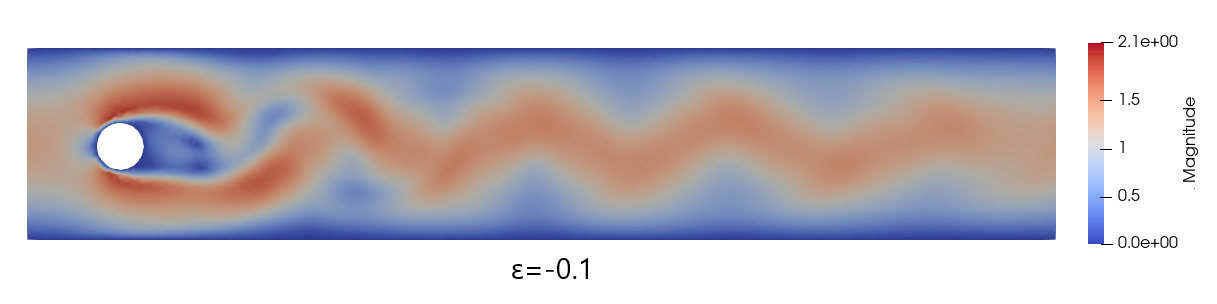} 
  } 
  \qquad\qquad
  \subfloat[]{%
    \includegraphics[scale=0.45]{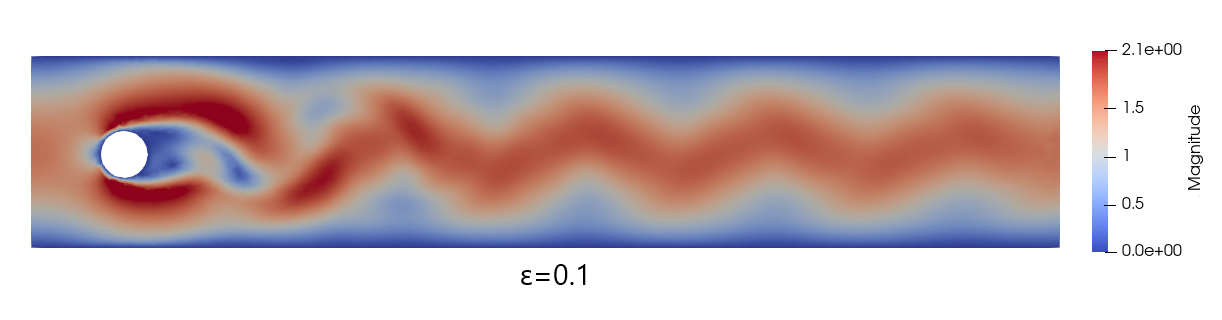} 
  } \\
  \caption{ Ensemble solutions for velocity at time $T=8.8$ for Algorithm \eqref{AlgoCND} with regularization and $\nu = 0.001$, $\gamma = 0.1$ and $\Delta t = 0.001$.\label{CN_flow_Reg_fig} }
\end{figure}

\begin{figure}[H] \captionsetup[subfigure]{labelformat=empty}
  \subfloat[]{%
    \includegraphics[scale=0.45]{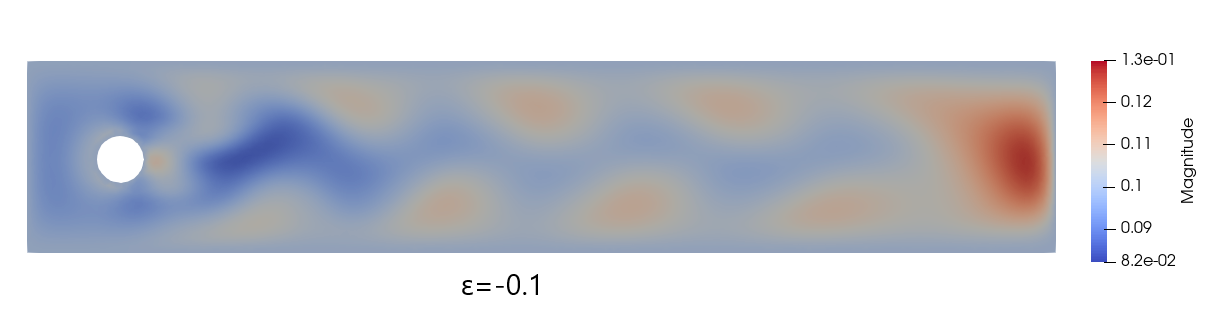} 
  } 
  \qquad\qquad
  \subfloat[]{%
    \includegraphics[scale=0.45]{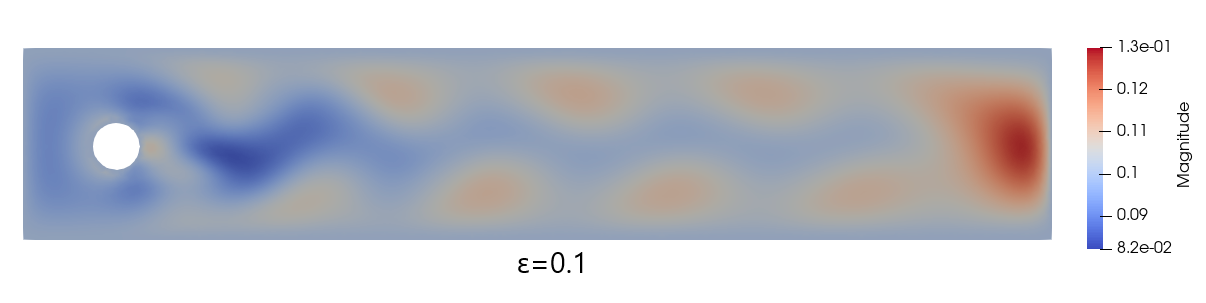} 
  } \\
  \caption{ Ensemble solutions for magnetic field at time $T=8.8$ for Algorithm \eqref{AlgoCND} with regularization and $\nu = 0.001$, $\gamma = 0.1$ and $\Delta t = 0.001$.\label{CN_mag_Reg_fig} }
\end{figure}

\begin{figure}[H] \captionsetup[subfigure]{labelformat=empty}
  \subfloat[]{%
    \includegraphics[scale=0.6]{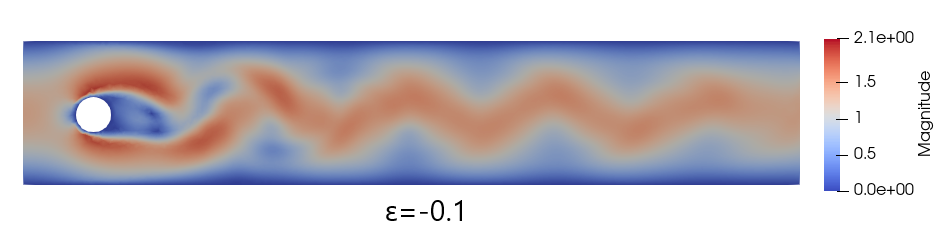} 
  } 
  \qquad\qquad
  \subfloat[]{%
    \includegraphics[scale=0.6]{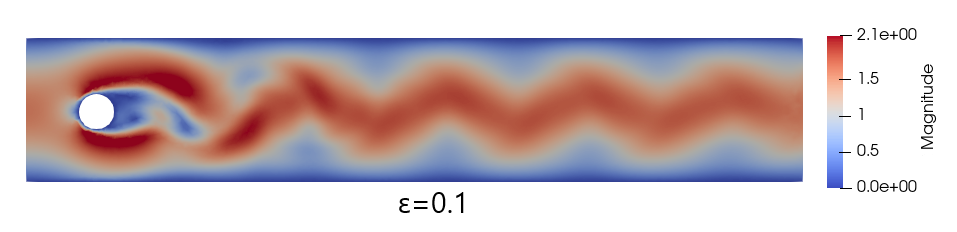} 
  } \\
  \caption{ Ensemble solutions for velocity at time $T=8.8$ for Algorithm \eqref{AlgoBDFD} with regularization and $\nu = 0.001$, $\gamma = 0.1$ and $\Delta t = 0.001$.\label{BDF2_flow_Reg_fig} }
\end{figure}

\begin{figure}[H] \captionsetup[subfigure]{labelformat=empty}
  \subfloat[]{%
    \includegraphics[scale=0.6]{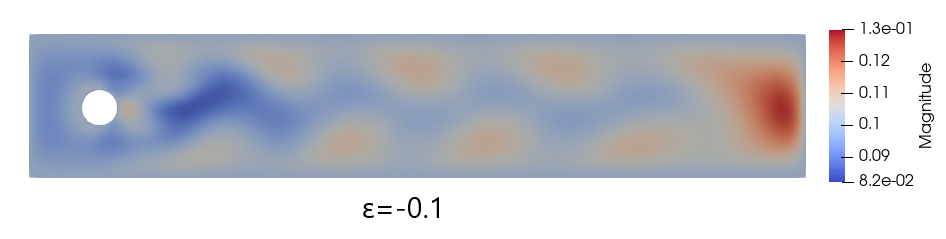} 
  } 
  \qquad\qquad
  \subfloat[]{%
    \includegraphics[scale=0.6]{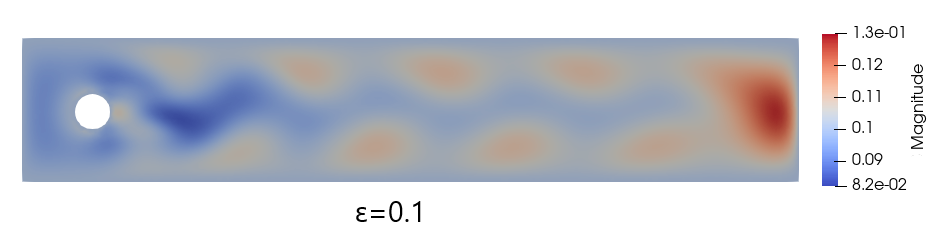} 
  } \\
  \caption{ Ensemble solutions for magnetic field at time $T=8.8$ for Algorithm \eqref{AlgoBDFD} with regularization and $\nu = 0.001$, $\gamma = 0.1$ and $\Delta t = 0.001$.\label{BDF2_mag_Reg_fig} }
\end{figure}

\newpage

\subsection{Accuracy Comparison}
In this section we present a comparison test between the errors of the scheme with and without the regularization terms introduced in Section \ref{ChFlow}. We use the same test as in \ref{convT}, except this time we set $\nu = 1.0$ and $\gamma = 0.2$. We choose two perturbations of $\epsilon = 0.1$ and $\epsilon = 0.2$, with final time $T = 2.5$. For the stabilization coefficients $\alpha$ and $\alpha_M$, we set them equal to the viscosity and magnetic resistivity correspondingly.

\begin{table}[H]
	\centering
	\caption{Error for the first ensemble member in $u_h$.}
	\label{uRegT}
	\begin{tabular}{|c|c|c|c|c|c|}
		\hline
		\rowcolor{lightgray}
		h & $\Delta t$ & SAV-CN & SAV-BDF2 & Stab-SAV-CN & Stab-SAV-BDF2  \\
		\hline
		1/25 & 1/8 & 6.201 e-2 & 3.729 e-2 & 2.865 e-5 & 3.508 e-5\\  
		1/25 & 1/16 & 1.036 e-1 & 6.114 e-2 & 3.087 e-5 & 3.204 e-5 \\
		1/25 & 1/32 & 1.494 e-1 & 9.843 e-2 & 3.223 e-5 & 3.252 e-5 \\
		1/25 & 1/64 & 1.451 e-1 & 1.013 e-1 & 3.261 e-5 & 3.268 e-5 \\
		1/25 & 1/128 & 1.273 e-1 & 9.343 e-2 & 3.271 e-5 & 3.272 e-5 \\
		\hline
		1/100 & 1/8 & 6.306 e-2 & 3.789 e-2 & 6.567 e-6 & 8.040 e-6 \\  
		1/100 & 1/16 & 1.064 e-1 & 6.229 e-2 & 8.492 e-6 & 4.458 e-6 \\
		1/100 & 1/32 & 1.436 e-1 & 8.644 e-2 & 2.178 e-6 & 2.279 e-6 \\
		1/100 & 1/64 & 1.530 e-1 & 9.920 e-2 & 1.098 e-6 & 1.120 e-6 \\
		1/100 & 1/128 & 1.277 e-1 & 8.957 e-2 & 5.982 e-7 & 1.129 e-6 \\
		\hline
	\end{tabular}
\end{table}

\begin{table}[H]
	\centering
	\caption{Error for the first ensemble member in $B_h$.}
	\label{BRegT}
	\begin{tabular}{|c|c|c|c|c|c|}
		\hline
		\rowcolor{lightgray}
		h & $\Delta t$ & SAV-CN & SAV-BDF2 & Stab-SAV-CN & Stab-SAV-BDF2  \\
		\hline
		1/25 & 1/8 & 2.144 e-1 & 1.291 e-1 & 8.263 e-5 & 8.146 e-5 \\  
		1/25 & 1/16 & 3.016 e-1 & 1.926 e-1 & 3.856 e-5 & 3.598 e-5 \\
		1/25 & 1/32 & 3.715 e-1 & 2.419 e-1 & 1.247 e-5 & 1.153 e-5 \\
		1/25 & 1/64 & 3.573 e-1 & 2.433 e-1 & 1.189 e-5 & 1.240 e-5 \\
		1/25 & 1/128 & 3.119 e-1 & 2.174 e-1 & 1.867 e-5 & 1.887 e-5 \\
		\hline
		1/100 & 1/8 & 2.180 e-1 & 1.312 e-1 & 2.372 e-5 & 2.361 e-5\\  
		1/100 & 1/16 & 3.101 e-1 & 1.962 e-1 & 4.695 e-5 & 1.249 e-5 \\
		1/100 & 1/32 & 3.594 e-1 & 2.331 e-1 & 6.428 e-6 & 5.910 e-6 \\
		1/100 & 1/64 & 3.583 e-1 & 2.385 e-1 & 2.726 e-6 & 2.400 e-6 \\
		1/100 & 1/128 & 3.057 e-1 & 2.115 e-1 & 8.643 e-7 & 5.897 e-6 \\
		\hline
	\end{tabular}
\end{table}

\section*{Acknowledgement}
J. Carter was partially supported by the US National Science Foundation grant DMS-1720001 and DMS-1912715.  D. Han was supported by the US National Science Foundation grant  DMS-1912715. N. Jiang was partially supported by the US National Science Foundation grants DMS-1720001 and DMS-2120413. The authors thank  Dr. Suchuan Dong for helpful discussions.

\bibliography{MHD_SAV_bib}

\def\cprime{$'$}

\begin{thebibliography}{39}
\ifx \bisbn   \undefined \def \bisbn  #1{ISBN #1}\fi
\ifx \binits  \undefined \def \binits#1{#1}\fi
\ifx \bauthor  \undefined \def \bauthor#1{#1}\fi
\ifx \batitle  \undefined \def \batitle#1{#1}\fi
\ifx \bjtitle  \undefined \def \bjtitle#1{#1}\fi
\ifx \bvolume  \undefined \def \bvolume#1{\textbf{#1}}\fi
\ifx \byear  \undefined \def \byear#1{#1}\fi
\ifx \bissue  \undefined \def \bissue#1{#1}\fi
\ifx \bfpage  \undefined \def \bfpage#1{#1}\fi
\ifx \blpage  \undefined \def \blpage #1{#1}\fi
\ifx \burl  \undefined \def \burl#1{\textsf{#1}}\fi
\ifx \doiurl  \undefined \def \doiurl#1{\textsf{#1}}\fi
\ifx \betal  \undefined \def \betal{\textit{et al.}}\fi
\ifx \binstitute  \undefined \def \binstitute#1{#1}\fi
\ifx \binstitutionaled  \undefined \def \binstitutionaled#1{#1}\fi
\ifx \bctitle  \undefined \def \bctitle#1{#1}\fi
\ifx \beditor  \undefined \def \beditor#1{#1}\fi
\ifx \bpublisher  \undefined \def \bpublisher#1{#1}\fi
\ifx \bbtitle  \undefined \def \bbtitle#1{#1}\fi
\ifx \bedition  \undefined \def \bedition#1{#1}\fi
\ifx \bseriesno  \undefined \def \bseriesno#1{#1}\fi
\ifx \blocation  \undefined \def \blocation#1{#1}\fi
\ifx \bsertitle  \undefined \def \bsertitle#1{#1}\fi
\ifx \bsnm \undefined \def \bsnm#1{#1}\fi
\ifx \bsuffix \undefined \def \bsuffix#1{#1}\fi
\ifx \bparticle \undefined \def \bparticle#1{#1}\fi
\ifx \barticle \undefined \def \barticle#1{#1}\fi
\ifx \bconfdate \undefined \def \bconfdate #1{#1}\fi
\ifx \botherref \undefined \def \botherref #1{#1}\fi
\ifx \url \undefined \def \url#1{\textsf{#1}}\fi
\ifx \bchapter \undefined \def \bchapter#1{#1}\fi
\ifx \bbook \undefined \def \bbook#1{#1}\fi
\ifx \bcomment \undefined \def \bcomment#1{#1}\fi
\ifx \oauthor \undefined \def \oauthor#1{#1}\fi
\ifx \citeauthoryear \undefined \def \citeauthoryear#1{#1}\fi
\ifx \endbibitem  \undefined \def \endbibitem {}\fi
\ifx \bconflocation  \undefined \def \bconflocation#1{#1}\fi
\ifx \arxivurl  \undefined \def \arxivurl#1{\textsf{#1}}\fi
\csname PreBibitemsHook\endcsname

\bibitem{MoRe2017}
\begin{barticle}
\bauthor{\bsnm{Mohebujjaman}, \binits{M.}},
\bauthor{\bsnm{Rebholz}, \binits{L.G.}}:
\batitle{An efficient algorithm for computation of {MHD} flow ensembles}.
\bjtitle{Comput. Methods Appl. Math.}
\bvolume{17}(\bissue{1}),
\bfpage{121}--\blpage{137}
(\byear{2017}).
doi:\doiurl{10.1515/cmam-2016-0033}
\end{barticle}
\endbibitem

\bibitem{WeZh2017}
\begin{barticle}
\bauthor{\bsnm{Wei}, \binits{D.}},
\bauthor{\bsnm{Zhang}, \binits{Z.}}:
\batitle{Global well-posedness of the {MHD} equations in a homogeneous magnetic
  field}.
\bjtitle{Anal. PDE}
\bvolume{10}(\bissue{6}),
\bfpage{1361}--\blpage{1406}
(\byear{2017}).
doi:\doiurl{10.2140/apde.2017.10.1361}
\end{barticle}
\endbibitem

\bibitem{Trenchea2014}
\begin{barticle}
\bauthor{\bsnm{Trenchea}, \binits{C.}}:
\batitle{Unconditional stability of a partitioned {IMEX} method for
  magnetohydrodynamic flows}.
\bjtitle{Appl. Math. Lett.}
\bvolume{27},
\bfpage{97}--\blpage{100}
(\byear{2014}).
doi:\doiurl{10.1016/j.aml.2013.06.017}
\end{barticle}
\endbibitem

\bibitem{MWRM2021}
\begin{botherref}
\oauthor{\bsnm{Mohebujjaman}, \binits{M.}},
\oauthor{\bsnm{Wang}, \binits{H.}},
\oauthor{\bsnm{Rebholz}, \binits{L.G.}},
\oauthor{\bsnm{Mahbub}, \binits{M.A.A.}}:
An efficient algorithm for simulating ensembles of parameterized MHD flow
  problems
(2021).
\arxivurl{2108.05110}
\end{botherref}
\endbibitem

\bibitem{JiLa2014}
\begin{barticle}
\bauthor{\bsnm{Jiang}, \binits{N.}},
\bauthor{\bsnm{Layton}, \binits{W.}}:
\batitle{An algorithm for fast calculation of flow ensembles}.
\bjtitle{Int. J. Uncertain. Quantif.}
\bvolume{4}(\bissue{4}),
\bfpage{273}--\blpage{301}
(\byear{2014}).
doi:\doiurl{10.1615/Int.J.UncertaintyQuantification.2014007691}
\end{barticle}
\endbibitem

\bibitem{JiLa2015}
\begin{barticle}
\bauthor{\bsnm{Jiang}, \binits{N.}},
\bauthor{\bsnm{Layton}, \binits{W.}}:
\batitle{Numerical analysis of two ensemble eddy viscosity numerical
  regularizations of fluid motion}.
\bjtitle{Numer. Methods Partial Differential Equations}
\bvolume{31}(\bissue{3}),
\bfpage{630}--\blpage{651}
(\byear{2015}).
doi:\doiurl{10.1002/num.21908}
\end{barticle}
\endbibitem

\bibitem{Jiang2015}
\begin{barticle}
\bauthor{\bsnm{Jiang}, \binits{N.}}:
\batitle{A higher order ensemble simulation algorithm for fluid flows}.
\bjtitle{J. Sci. Comput.}
\bvolume{64}(\bissue{1}),
\bfpage{264}--\blpage{288}
(\byear{2015}).
doi:\doiurl{10.1007/s10915-014-9932-z}
\end{barticle}
\endbibitem

\bibitem{Jiang2017}
\begin{barticle}
\bauthor{\bsnm{Jiang}, \binits{N.}}:
\batitle{A second-order ensemble method based on a blended backward
  differentiation formula timestepping scheme for time-dependent
  {N}avier-{S}tokes equations}.
\bjtitle{Numer. Methods Partial Differential Equations}
\bvolume{33}(\bissue{1}),
\bfpage{34}--\blpage{61}
(\byear{2017}).
doi:\doiurl{10.1002/num.22070}
\end{barticle}
\endbibitem

\bibitem{GJS2017}
\begin{barticle}
\bauthor{\bsnm{Gunzburger}, \binits{M.}},
\bauthor{\bsnm{Jiang}, \binits{N.}},
\bauthor{\bsnm{Schneier}, \binits{M.}}:
\batitle{An ensemble-proper orthogonal decomposition method for the
  nonstationary {N}avier-{S}tokes equations}.
\bjtitle{SIAM J. Numer. Anal.}
\bvolume{55}(\bissue{1}),
\bfpage{286}--\blpage{304}
(\byear{2017}).
doi:\doiurl{10.1137/16M1056444}
\end{barticle}
\endbibitem

\bibitem{Fiordilino2018}
\begin{barticle}
\bauthor{\bsnm{Fiordilino}, \binits{J.A.}}:
\batitle{A second order ensemble timestepping algorithm for natural
  convection}.
\bjtitle{SIAM J. Numer. Anal.}
\bvolume{56}(\bissue{2}),
\bfpage{816}--\blpage{837}
(\byear{2018}).
doi:\doiurl{10.1137/17M1135104}
\end{barticle}
\endbibitem

\bibitem{Jiang2019}
\begin{barticle}
\bauthor{\bsnm{Jiang}, \binits{N.}}:
\batitle{A pressure-correction ensemble scheme for computing evolutionary
  {B}oussinesq equations}.
\bjtitle{J. Sci. Comput.}
\bvolume{80}(\bissue{1}),
\bfpage{315}--\blpage{350}
(\byear{2019}).
doi:\doiurl{10.1007/s10915-019-00939-w}
\end{barticle}
\endbibitem

\bibitem{GJW2019}
\begin{barticle}
\bauthor{\bsnm{Gunzburger}, \binits{M.}},
\bauthor{\bsnm{Jiang}, \binits{N.}},
\bauthor{\bsnm{Wang}, \binits{Z.}}:
\batitle{An efficient algorithm for simulating ensembles of parameterized flow
  problems}.
\bjtitle{IMA J. Numer. Anal.}
\bvolume{39}(\bissue{3}),
\bfpage{1180}--\blpage{1205}
(\byear{2019}).
doi:\doiurl{10.1093/imanum/dry029}
\end{barticle}
\endbibitem

\bibitem{JiQi2019}
\begin{barticle}
\bauthor{\bsnm{Jiang}, \binits{N.}},
\bauthor{\bsnm{Qiu}, \binits{C.}}:
\batitle{An efficient ensemble algorithm for numerical approximation of
  stochastic {S}tokes-{D}arcy equations}.
\bjtitle{Comput. Methods Appl. Mech. Engrg.}
\bvolume{343},
\bfpage{249}--\blpage{275}
(\byear{2019}).
doi:\doiurl{10.1016/j.cma.2018.08.020}
\end{barticle}
\endbibitem

\bibitem{JLY2021}
\begin{barticle}
\bauthor{\bsnm{Jiang}, \binits{N.}},
\bauthor{\bsnm{Li}, \binits{Y.}},
\bauthor{\bsnm{Yang}, \binits{H.}}:
\batitle{An artificial compressibility {C}rank-{N}icolson leap-frog method for
  the {S}tokes-{D}arcy model and application in ensemble simulations}.
\bjtitle{SIAM J. Numer. Anal.}
\bvolume{59}(\bissue{1}),
\bfpage{401}--\blpage{428}
(\byear{2021}).
doi:\doiurl{10.1137/20M1321644}
\end{barticle}
\endbibitem

\bibitem{JiSc2018}
\begin{barticle}
\bauthor{\bsnm{Jiang}, \binits{N.}},
\bauthor{\bsnm{Schneier}, \binits{M.}}:
\batitle{An efficient, partitioned ensemble algorithm for simulating ensembles
  of evolutionary {MHD} flows at low magnetic {R}eynolds number}.
\bjtitle{Numer. Methods Partial Differential Equations}
\bvolume{34}(\bissue{6}),
\bfpage{2129}--\blpage{2152}
(\byear{2018}).
doi:\doiurl{10.1002/num.22281}
\end{barticle}
\endbibitem

\bibitem{CaNa2021}
\begin{barticle}
\bauthor{\bsnm{Carter}, \binits{J.}},
\bauthor{\bsnm{Jiang}, \binits{N.}}:
\batitle{Numerical analysis of a second order ensemble method for evolutionary
  magnetohydrodynamics equations at small magnetic reynolds number}.
\bjtitle{Numer. Methods Partial Differ. Eq.}
(\byear{2022}).
doi:\doiurl{10.1002/num.22843}.
\arxivurl{https://onlinelibrary.wiley.com/doi/pdf/10.1002/num.22843}
\end{barticle}
\endbibitem

\bibitem{Elsasser1950}
\begin{barticle}
\bauthor{\bsnm{Elsasser}, \binits{W.M.}}:
\batitle{The hydromagnetic equations}.
\bjtitle{Phys. Rev.}
\bvolume{79},
\bfpage{183}--\blpage{183}
(\byear{1950}).
doi:\doiurl{10.1103/PhysRev.79.183}
\end{barticle}
\endbibitem

\bibitem{GuTi2013}
\begin{barticle}
\bauthor{\bsnm{Guill{\'e}n-Gonz{\'a}lez}, \binits{F.}},
\bauthor{\bsnm{Tierra}, \binits{G.}}:
\batitle{On linear schemes for a {C}ahn-{H}illiard diffuse interface model}.
\bjtitle{J. Comput. Phys.}
\bvolume{234},
\bfpage{140}--\blpage{171}
(\byear{2013}).
doi:\doiurl{10.1016/j.jcp.2012.09.020}
\end{barticle}
\endbibitem

\bibitem{YZW2017}
\begin{barticle}
\bauthor{\bsnm{Yang}, \binits{X.}},
\bauthor{\bsnm{Zhao}, \binits{J.}},
\bauthor{\bsnm{Wang}, \binits{Q.}}:
\batitle{Numerical approximations for the molecular beam epitaxial growth model
  based on the invariant energy quadratization method}.
\bjtitle{J. Comput. Phys.}
\bvolume{333},
\bfpage{104}--\blpage{127}
(\byear{2017}).
doi:\doiurl{10.1016/j.jcp.2016.12.025}
\end{barticle}
\endbibitem

\bibitem{YaJu2017}
\begin{barticle}
\bauthor{\bsnm{Yang}, \binits{X.}},
\bauthor{\bsnm{Ju}, \binits{L.}}:
\batitle{Linear and unconditionally energy stable schemes for the binary
  fluid-surfactant phase field model}.
\bjtitle{Comput. Methods Appl. Mech. Engrg.}
\bvolume{318},
\bfpage{1005}--\blpage{1029}
(\byear{2017}).
doi:\doiurl{10.1016/j.cma.2017.02.011}
\end{barticle}
\endbibitem

\bibitem{GZW2020}
\begin{barticle}
\bauthor{\bsnm{Gong}, \binits{Y.}},
\bauthor{\bsnm{Zhao}, \binits{J.}},
\bauthor{\bsnm{Wang}, \binits{Q.}}:
\batitle{Arbitrarily high-order linear energy stable schemes for gradient flow
  models}.
\bjtitle{J. Comput. Phys.}
\bvolume{419},
\bfpage{109610}--\blpage{20}
(\byear{2020}).
doi:\doiurl{10.1016/j.jcp.2020.109610}
\end{barticle}
\endbibitem

\bibitem{SXY2018}
\begin{barticle}
\bauthor{\bsnm{Shen}, \binits{J.}},
\bauthor{\bsnm{Xu}, \binits{J.}},
\bauthor{\bsnm{Yang}, \binits{J.}}:
\batitle{The scalar auxiliary variable ({SAV}) approach for gradient flows}.
\bjtitle{J. Comput. Phys.}
\bvolume{353},
\bfpage{407}--\blpage{416}
(\byear{2018})
\end{barticle}
\endbibitem

\bibitem{SXY2019}
\begin{barticle}
\bauthor{\bsnm{Shen}, \binits{J.}},
\bauthor{\bsnm{Xu}, \binits{J.}},
\bauthor{\bsnm{Yang}, \binits{J.}}:
\batitle{A new class of efficient and robust energy stable schemes for gradient
  flows}.
\bjtitle{SIAM Rev.}
\bvolume{61}(\bissue{3}),
\bfpage{474}--\blpage{506}
(\byear{2019}).
doi:\doiurl{10.1137/17M1150153}
\end{barticle}
\endbibitem

\bibitem{YaDo2019}
\begin{barticle}
\bauthor{\bsnm{Yang}, \binits{Z.}},
\bauthor{\bsnm{Dong}, \binits{S.}}:
\batitle{An unconditionally energy-stable scheme based on an implicit auxiliary
  energy variable for incompressible two-phase flows with different densities
  involving only precomputable coefficient matrices}.
\bjtitle{J. Comput. Phys.}
\bvolume{393},
\bfpage{229}--\blpage{257}
(\byear{2019}).
doi:\doiurl{10.1016/j.jcp.2019.05.018}
\end{barticle}
\endbibitem

\bibitem{YaDo2020}
\begin{barticle}
\bauthor{\bsnm{Yang}, \binits{Z.}},
\bauthor{\bsnm{Dong}, \binits{S.}}:
\batitle{A roadmap for discretely energy-stable schemes for dissipative systems
  based on a generalized auxiliary variable with guaranteed positivity}.
\bjtitle{J. Comput. Phys.}
\bvolume{404},
\bfpage{109121}--\blpage{46}
(\byear{2020}).
doi:\doiurl{10.1016/j.jcp.2019.109121}
\end{barticle}
\endbibitem

\bibitem{Yang2021}
\begin{barticle}
\bauthor{\bsnm{Yang}, \binits{X.}}:
\batitle{A novel fully-decoupled, second-order and energy stable numerical
  scheme of the conserved {A}llen-{C}ahn type flow-coupled binary surfactant
  model}.
\bjtitle{Comput. Methods Appl. Mech. Engrg.}
\bvolume{373},
\bfpage{113502}
(\byear{2021}).
doi:\doiurl{10.1016/j.cma.2020.113502}
\end{barticle}
\endbibitem

\bibitem{LSZ2021}
\begin{barticle}
\bauthor{\bsnm{Li}, \binits{S.J.} \bsuffix{Xiaoli}},
\bauthor{\bsnm{Liu}, \binits{Z.}}:
\batitle{New sav-pressure correction methods for the navier-stokes equations:
  Stability and error analysis}.
\bjtitle{Math. Comp.}
(\byear{2021}).
doi:\doiurl{10.1090/mcom/3651}
\end{barticle}
\endbibitem

\bibitem{JiYa2021}
\begin{barticle}
\bauthor{\bsnm{Jiang}, \binits{N.}},
\bauthor{\bsnm{Yang}, \binits{H.}}:
\batitle{S{AV} decoupled ensemble algorithms for fast computation of
  {S}tokes-{D}arcy flow ensembles}.
\bjtitle{Comput. Methods Appl. Mech. Engrg.}
\bvolume{387},
\bfpage{114150}--\blpage{34}
(\byear{2021}).
doi:\doiurl{10.1016/j.cma.2021.114150}
\end{barticle}
\endbibitem

\bibitem{LLMNR2009}
\begin{barticle}
\bauthor{\bsnm{Labovsky}, \binits{A.}},
\bauthor{\bsnm{Layton}, \binits{W.J.}},
\bauthor{\bsnm{Manica}, \binits{C.C.}},
\bauthor{\bsnm{Neda}, \binits{M.}},
\bauthor{\bsnm{Rebholz}, \binits{L.G.}}:
\batitle{The stabilized extrapolated trapezoidal finite-element method for the
  {N}avier-{S}tokes equations}.
\bjtitle{Comput. Methods Appl. Mech. Engrg.}
\bvolume{198}(\bissue{9-12}),
\bfpage{958}--\blpage{974}
(\byear{2009}).
doi:\doiurl{10.1016/j.cma.2008.11.004}
\end{barticle}
\endbibitem

\bibitem{LWS2022}
\begin{barticle}
\bauthor{\bsnm{Li}, \binits{X.}},
\bauthor{\bsnm{Wang}, \binits{W.}},
\bauthor{\bsnm{Shen}, \binits{J.}}:
\batitle{Stability and {E}rror {A}nalysis of {IMEX} {SAV} {S}chemes for the
  {M}agneto-{H}ydrodynamic {E}quations}.
\bjtitle{SIAM J. Numer. Anal.}
\bvolume{60}(\bissue{3}),
\bfpage{1026}--\blpage{1054}
(\byear{2022}).
doi:\doiurl{10.1137/21M1430376}
\end{barticle}
\endbibitem

\bibitem{ZOWW2020}
\begin{barticle}
\bauthor{\bsnm{Zhang}, \binits{C.}},
\bauthor{\bsnm{Ouyang}, \binits{J.}},
\bauthor{\bsnm{Wang}, \binits{C.}},
\bauthor{\bsnm{Wise}, \binits{S.M.}}:
\batitle{Numerical comparison of modified-energy stable {SAV}-type schemes and
  classical {BDF} methods on benchmark problems for the functionalized
  {C}ahn-{H}illiard equation}.
\bjtitle{J. Comput. Phys.}
\bvolume{423},
\bfpage{109772}--\blpage{35}
(\byear{2020}).
doi:\doiurl{10.1016/j.jcp.2020.109772}
\end{barticle}
\endbibitem

\bibitem{JZZ2022}
\begin{barticle}
\bauthor{\bsnm{Jiang}, \binits{M.}},
\bauthor{\bsnm{Zhang}, \binits{Z.}},
\bauthor{\bsnm{Zhao}, \binits{J.}}:
\batitle{Improving the accuracy and consistency of the scalar auxiliary
  variable ({SAV}) method with relaxation}.
\bjtitle{J. Comput. Phys.}
\bvolume{456},
\bfpage{110954}
(\byear{2022}).
doi:\doiurl{10.1016/j.jcp.2022.110954}
\end{barticle}
\endbibitem

\bibitem{FHT2001}
\begin{botherref}
\oauthor{\bsnm{Foias}, \binits{C.}},
\oauthor{\bsnm{Holm}, \binits{D.D.}},
\oauthor{\bsnm{Titi}, \binits{E.S.}}:
The {N}avier-{S}tokes-alpha model of fluid turbulence.
vol. 152/153,
pp. 505--519
(2001).
doi:\doiurl{10.1016/S0167-2789(01)00191-9}.
Advances in nonlinear mathematics and science.
\url{https://doi-org.libproxy.mst.edu/10.1016/S0167-2789(01)00191-9}
\end{botherref}
\endbibitem

\bibitem{CHMZ1999}
\begin{botherref}
\oauthor{\bsnm{Chen}, \binits{S.}},
\oauthor{\bsnm{Holm}, \binits{D.D.}},
\oauthor{\bsnm{Margolin}, \binits{L.G.}},
\oauthor{\bsnm{Zhang}, \binits{R.}}:
Direct numerical simulations of the {N}avier-{S}tokes alpha model.
vol. 133,
pp. 66--83
(1999).
doi:\doiurl{10.1016/S0167-2789(99)00099-8}.
Predictability: quantifying uncertainty in models of complex phenomena (Los
  Alamos, NM, 1998).
\url{https://doi-org.libproxy.mst.edu/10.1016/S0167-2789(99)00099-8}
\end{botherref}
\endbibitem

\bibitem{1989}
\begin{bbook}
\bauthor{\bsnm{Gunzburger}, \binits{M.D.}}:
\bbtitle{Finite Element Methods for Viscous Incompressible Flows}.
\bsertitle{Computer Science and Scientific Computing}.
\bpublisher{Academic Press},
\blocation{San Diego}
(\byear{1989}).
doi:\doiurl{10.1016/B978-0-12-307350-1.50002-8}.
\burl{https://www.sciencedirect.com/science/article/pii/B9780123073501500028}
\end{bbook}
\endbibitem

\bibitem{brenner2007mathematical}
\begin{bbook}
\bauthor{\bsnm{Brenner}, \binits{S.}},
\bauthor{\bsnm{Scott}, \binits{R.}}:
\bbtitle{The Mathematical Theory of Finite Element Methods}.
\bsertitle{Texts in Applied Mathematics}.
\bpublisher{Springer}, \blocation{???}
(\byear{2007}).
\burl{https://books.google.com/books?id=ci4c\_R0WKYYC}
\end{bbook}
\endbibitem

\bibitem{Kraichnan1967}
\begin{barticle}
\bauthor{\bsnm{Kraichnan}, \binits{R.H.}}:
\batitle{Inertial ranges in two‐dimensional turbulence}.
\bjtitle{The Physics of Fluids}
\bvolume{10}(\bissue{7}),
\bfpage{1417}--\blpage{1423}
(\byear{1967}).
doi:\doiurl{10.1063/1.1762301}.
\arxivurl{https://aip.scitation.org/doi/pdf/10.1063/1.1762301}
\end{barticle}
\endbibitem

\bibitem{FENICS}
\begin{botherref}
\oauthor{\bsnm{Alnæs}, \binits{M.}},
\oauthor{\bsnm{Blechta}, \binits{J.}},
\oauthor{\bsnm{Hake}, \binits{J.}},
\oauthor{\bsnm{Johansson}, \binits{A.}},
\oauthor{\bsnm{Kehlet}, \binits{B.}},
\oauthor{\bsnm{Logg}, \binits{A.}},
\oauthor{\bsnm{Richardson}, \binits{C.}},
\oauthor{\bsnm{Ring}, \binits{J.}},
\oauthor{\bsnm{Rognes}, \binits{M.}},
\oauthor{\bsnm{Wells}, \binits{G.}}:
The fenics project version 1.5
\textbf{3}
(2015).
doi:\doiurl{10.11588/ans.2015.100.20553}
\end{botherref}
\endbibitem

\bibitem{decoupMHD}
\begin{barticle}
\bauthor{\bsnm{Zhang}, \binits{G.}},
\bauthor{\bsnm{He}, \binits{X.}},
\bauthor{\bsnm{Yang}, \binits{X.}}:
\batitle{Fully decoupled, linear and unconditionally energy stable time
  discretization scheme for solving the magneto-hydrodynamic equations}.
\bjtitle{Journal of Computational and Applied Mathematics}
\bvolume{369},
\bfpage{112636}
(\byear{2019}).
doi:\doiurl{10.1016/j.cam.2019.112636}
\end{barticle}
\endbibitem

\end{thebibliography}

\newcommand{\BMCxmlcomment}[1]{}

\BMCxmlcomment{

<refgrp>

<bibl id="B1">
  <title><p>An efficient algorithm for computation of {MHD} flow
  ensembles</p></title>
  <aug>
    <au><snm>Mohebujjaman</snm><fnm>M</fnm></au>
    <au><snm>Rebholz</snm><fnm>LG</fnm></au>
  </aug>
  <source>Comput. Methods Appl. Math.</source>
  <pubdate>2017</pubdate>
  <volume>17</volume>
  <issue>1</issue>
  <fpage>121</fpage>
  <lpage>-137</lpage>
  <url>https://doi-org.libproxy.mst.edu/10.1515/cmam-2016-0033</url>
</bibl>

<bibl id="B2">
  <title><p>Global well-posedness of the {MHD} equations in a homogeneous
  magnetic field</p></title>
  <aug>
    <au><snm>Wei</snm><fnm>D</fnm></au>
    <au><snm>Zhang</snm><fnm>Z</fnm></au>
  </aug>
  <source>Anal. PDE</source>
  <pubdate>2017</pubdate>
  <volume>10</volume>
  <issue>6</issue>
  <fpage>1361</fpage>
  <lpage>-1406</lpage>
  <url>https://doi-org.libproxy.mst.edu/10.2140/apde.2017.10.1361</url>
</bibl>

<bibl id="B3">
  <title><p>Unconditional stability of a partitioned {IMEX} method for
  magnetohydrodynamic flows</p></title>
  <aug>
    <au><snm>Trenchea</snm><fnm>C</fnm></au>
  </aug>
  <source>Appl. Math. Lett.</source>
  <pubdate>2014</pubdate>
  <volume>27</volume>
  <fpage>97</fpage>
  <lpage>-100</lpage>
  <url>https://doi.org/10.1016/j.aml.2013.06.017</url>
</bibl>

<bibl id="B4">
  <title><p>An efficient algorithm for simulating ensembles of parameterized
  MHD flow problems</p></title>
  <aug>
    <au><snm>Mohebujjaman</snm><fnm>M</fnm></au>
    <au><snm>Wang</snm><fnm>H</fnm></au>
    <au><snm>Rebholz</snm><fnm>LG</fnm></au>
    <au><snm>Mahbub</snm><fnm>MAA</fnm></au>
  </aug>
  <pubdate>2021</pubdate>
</bibl>

<bibl id="B5">
  <title><p>An algorithm for fast calculation of flow ensembles</p></title>
  <aug>
    <au><snm>Jiang</snm><fnm>N</fnm></au>
    <au><snm>Layton</snm><fnm>W</fnm></au>
  </aug>
  <source>Int. J. Uncertain. Quantif.</source>
  <pubdate>2014</pubdate>
  <volume>4</volume>
  <issue>4</issue>
  <fpage>273</fpage>
  <lpage>-301</lpage>
  <url>https://doi-org.libproxy.mst.edu/10.1615/Int.J.UncertaintyQuantification.2014007691</url>
</bibl>

<bibl id="B6">
  <title><p>Numerical analysis of two ensemble eddy viscosity numerical
  regularizations of fluid motion</p></title>
  <aug>
    <au><snm>Jiang</snm><fnm>N</fnm></au>
    <au><snm>Layton</snm><fnm>W</fnm></au>
  </aug>
  <source>Numer. Methods Partial Differential Equations</source>
  <pubdate>2015</pubdate>
  <volume>31</volume>
  <issue>3</issue>
  <fpage>630</fpage>
  <lpage>-651</lpage>
  <url>https://doi.org/10.1002/num.21908</url>
</bibl>

<bibl id="B7">
  <title><p>A higher order ensemble simulation algorithm for fluid
  flows</p></title>
  <aug>
    <au><snm>Jiang</snm><fnm>N</fnm></au>
  </aug>
  <source>J. Sci. Comput.</source>
  <pubdate>2015</pubdate>
  <volume>64</volume>
  <issue>1</issue>
  <fpage>264</fpage>
  <lpage>-288</lpage>
  <url>https://doi.org/10.1007/s10915-014-9932-z</url>
</bibl>

<bibl id="B8">
  <title><p>A second-order ensemble method based on a blended backward
  differentiation formula timestepping scheme for time-dependent
  {N}avier-{S}tokes equations</p></title>
  <aug>
    <au><snm>Jiang</snm><fnm>N</fnm></au>
  </aug>
  <source>Numer. Methods Partial Differential Equations</source>
  <pubdate>2017</pubdate>
  <volume>33</volume>
  <issue>1</issue>
  <fpage>34</fpage>
  <lpage>-61</lpage>
  <url>https://doi-org.libproxy.mst.edu/10.1002/num.22070</url>
</bibl>

<bibl id="B9">
  <title><p>An ensemble-proper orthogonal decomposition method for the
  nonstationary {N}avier-{S}tokes equations</p></title>
  <aug>
    <au><snm>Gunzburger</snm><fnm>M</fnm></au>
    <au><snm>Jiang</snm><fnm>N</fnm></au>
    <au><snm>Schneier</snm><fnm>M</fnm></au>
  </aug>
  <source>SIAM J. Numer. Anal.</source>
  <pubdate>2017</pubdate>
  <volume>55</volume>
  <issue>1</issue>
  <fpage>286</fpage>
  <lpage>-304</lpage>
  <url>https://doi-org.libproxy.mst.edu/10.1137/16M1056444</url>
</bibl>

<bibl id="B10">
  <title><p>A second order ensemble timestepping algorithm for natural
  convection</p></title>
  <aug>
    <au><snm>Fiordilino</snm><fnm>J. A.</fnm></au>
  </aug>
  <source>SIAM J. Numer. Anal.</source>
  <pubdate>2018</pubdate>
  <volume>56</volume>
  <issue>2</issue>
  <fpage>816</fpage>
  <lpage>-837</lpage>
  <url>https://doi.org/10.1137/17M1135104</url>
</bibl>

<bibl id="B11">
  <title><p>A pressure-correction ensemble scheme for computing evolutionary
  {B}oussinesq equations</p></title>
  <aug>
    <au><snm>Jiang</snm><fnm>N</fnm></au>
  </aug>
  <source>J. Sci. Comput.</source>
  <pubdate>2019</pubdate>
  <volume>80</volume>
  <issue>1</issue>
  <fpage>315</fpage>
  <lpage>-350</lpage>
  <url>https://doi-org.libproxy.mst.edu/10.1007/s10915-019-00939-w</url>
</bibl>

<bibl id="B12">
  <title><p>An efficient algorithm for simulating ensembles of parameterized
  flow problems</p></title>
  <aug>
    <au><snm>Gunzburger</snm><fnm>M</fnm></au>
    <au><snm>Jiang</snm><fnm>N</fnm></au>
    <au><snm>Wang</snm><fnm>Z</fnm></au>
  </aug>
  <source>IMA J. Numer. Anal.</source>
  <pubdate>2019</pubdate>
  <volume>39</volume>
  <issue>3</issue>
  <fpage>1180</fpage>
  <lpage>-1205</lpage>
  <url>https://doi-org.libproxy.mst.edu/10.1093/imanum/dry029</url>
</bibl>

<bibl id="B13">
  <title><p>An efficient ensemble algorithm for numerical approximation of
  stochastic {S}tokes-{D}arcy equations</p></title>
  <aug>
    <au><snm>Jiang</snm><fnm>N</fnm></au>
    <au><snm>Qiu</snm><fnm>C</fnm></au>
  </aug>
  <source>Comput. Methods Appl. Mech. Engrg.</source>
  <pubdate>2019</pubdate>
  <volume>343</volume>
  <fpage>249</fpage>
  <lpage>-275</lpage>
  <url>https://doi.org/10.1016/j.cma.2018.08.020</url>
</bibl>

<bibl id="B14">
  <title><p>An artificial compressibility {C}rank-{N}icolson leap-frog method
  for the {S}tokes-{D}arcy model and application in ensemble
  simulations</p></title>
  <aug>
    <au><snm>Jiang</snm><fnm>N</fnm></au>
    <au><snm>Li</snm><fnm>Y</fnm></au>
    <au><snm>Yang</snm><fnm>H</fnm></au>
  </aug>
  <source>SIAM J. Numer. Anal.</source>
  <pubdate>2021</pubdate>
  <volume>59</volume>
  <issue>1</issue>
  <fpage>401</fpage>
  <lpage>-428</lpage>
  <url>https://doi.org/10.1137/20M1321644</url>
</bibl>

<bibl id="B15">
  <title><p>An efficient, partitioned ensemble algorithm for simulating
  ensembles of evolutionary {MHD} flows at low magnetic {R}eynolds
  number</p></title>
  <aug>
    <au><snm>Jiang</snm><fnm>N</fnm></au>
    <au><snm>Schneier</snm><fnm>M</fnm></au>
  </aug>
  <source>Numer. Methods Partial Differential Equations</source>
  <pubdate>2018</pubdate>
  <volume>34</volume>
  <issue>6</issue>
  <fpage>2129</fpage>
  <lpage>-2152</lpage>
  <url>https://doi-org.libproxy.mst.edu/10.1002/num.22281</url>
</bibl>

<bibl id="B16">
  <title><p>Numerical analysis of a second order ensemble method for
  evolutionary magnetohydrodynamics equations at small magnetic Reynolds
  number</p></title>
  <aug>
    <au><snm>Carter</snm><fnm>J</fnm></au>
    <au><snm>Jiang</snm><fnm>N</fnm></au>
  </aug>
  <source>Numer. Methods Partial Differ. Eq.</source>
  <pubdate>2022</pubdate>
  <url>https://onlinelibrary.wiley.com/doi/abs/10.1002/num.22843</url>
</bibl>

<bibl id="B17">
  <title><p>The Hydromagnetic Equations</p></title>
  <aug>
    <au><snm>Elsasser</snm><fnm>WM</fnm></au>
  </aug>
  <source>Phys. Rev.</source>
  <publisher>American Physical Society</publisher>
  <pubdate>1950</pubdate>
  <volume>79</volume>
  <fpage>183</fpage>
  <lpage>-183</lpage>
  <url>https://link.aps.org/doi/10.1103/PhysRev.79.183</url>
</bibl>

<bibl id="B18">
  <title><p>On linear schemes for a {C}ahn-{H}illiard diffuse interface
  model</p></title>
  <aug>
    <au><snm>Guill{\'e}n Gonz{\'a}lez</snm><fnm>F.</fnm></au>
    <au><snm>Tierra</snm><fnm>G.</fnm></au>
  </aug>
  <source>J. Comput. Phys.</source>
  <pubdate>2013</pubdate>
  <volume>234</volume>
  <fpage>140</fpage>
  <lpage>-171</lpage>
  <url>http://dx.doi.org/10.1016/j.jcp.2012.09.020</url>
</bibl>

<bibl id="B19">
  <title><p>Numerical approximations for the molecular beam epitaxial growth
  model based on the invariant energy quadratization method</p></title>
  <aug>
    <au><snm>Yang</snm><fnm>X</fnm></au>
    <au><snm>Zhao</snm><fnm>J</fnm></au>
    <au><snm>Wang</snm><fnm>Q</fnm></au>
  </aug>
  <source>J. Comput. Phys.</source>
  <pubdate>2017</pubdate>
  <volume>333</volume>
  <fpage>104</fpage>
  <lpage>-127</lpage>
  <url>http://dx.doi.org/10.1016/j.jcp.2016.12.025</url>
</bibl>

<bibl id="B20">
  <title><p>Linear and unconditionally energy stable schemes for the binary
  fluid-surfactant phase field model</p></title>
  <aug>
    <au><snm>Yang</snm><fnm>X</fnm></au>
    <au><snm>Ju</snm><fnm>L</fnm></au>
  </aug>
  <source>Comput. Methods Appl. Mech. Engrg.</source>
  <pubdate>2017</pubdate>
  <volume>318</volume>
  <fpage>1005</fpage>
  <lpage>-1029</lpage>
  <url>http://dx.doi.org/10.1016/j.cma.2017.02.011</url>
</bibl>

<bibl id="B21">
  <title><p>Arbitrarily high-order linear energy stable schemes for gradient
  flow models</p></title>
  <aug>
    <au><snm>Gong</snm><fnm>Y</fnm></au>
    <au><snm>Zhao</snm><fnm>J</fnm></au>
    <au><snm>Wang</snm><fnm>Q</fnm></au>
  </aug>
  <source>J. Comput. Phys.</source>
  <pubdate>2020</pubdate>
  <volume>419</volume>
  <fpage>109610,20</fpage>
  <url>https://doi-org.libproxy.mst.edu/10.1016/j.jcp.2020.109610</url>
</bibl>

<bibl id="B22">
  <title><p>The scalar auxiliary variable ({SAV}) approach for gradient
  flows</p></title>
  <aug>
    <au><snm>Shen</snm><fnm>J</fnm></au>
    <au><snm>Xu</snm><fnm>J</fnm></au>
    <au><snm>Yang</snm><fnm>J</fnm></au>
  </aug>
  <source>J. Comput. Phys.</source>
  <pubdate>2018</pubdate>
  <volume>353</volume>
  <fpage>407</fpage>
  <lpage>-416</lpage>
  <url>https://doi.org/10.1016/j.jcp.2017.10.021</url>
</bibl>

<bibl id="B23">
  <title><p>A new class of efficient and robust energy stable schemes for
  gradient flows</p></title>
  <aug>
    <au><snm>Shen</snm><fnm>J</fnm></au>
    <au><snm>Xu</snm><fnm>J</fnm></au>
    <au><snm>Yang</snm><fnm>J</fnm></au>
  </aug>
  <source>SIAM Rev.</source>
  <pubdate>2019</pubdate>
  <volume>61</volume>
  <issue>3</issue>
  <fpage>474</fpage>
  <lpage>-506</lpage>
  <url>https://doi.org/10.1137/17M1150153</url>
</bibl>

<bibl id="B24">
  <title><p>An unconditionally energy-stable scheme based on an implicit
  auxiliary energy variable for incompressible two-phase flows with different
  densities involving only precomputable coefficient matrices</p></title>
  <aug>
    <au><snm>Yang</snm><fnm>Z</fnm></au>
    <au><snm>Dong</snm><fnm>S</fnm></au>
  </aug>
  <source>J. Comput. Phys.</source>
  <pubdate>2019</pubdate>
  <volume>393</volume>
  <fpage>229</fpage>
  <lpage>-257</lpage>
  <url>https://doi.org/10.1016/j.jcp.2019.05.018</url>
</bibl>

<bibl id="B25">
  <title><p>A roadmap for discretely energy-stable schemes for dissipative
  systems based on a generalized auxiliary variable with guaranteed
  positivity</p></title>
  <aug>
    <au><snm>Yang</snm><fnm>Z</fnm></au>
    <au><snm>Dong</snm><fnm>S</fnm></au>
  </aug>
  <source>J. Comput. Phys.</source>
  <pubdate>2020</pubdate>
  <volume>404</volume>
  <fpage>109121,46</fpage>
  <url>https://doi-org.libproxy.mst.edu/10.1016/j.jcp.2019.109121</url>
</bibl>

<bibl id="B26">
  <title><p>A novel fully-decoupled, second-order and energy stable numerical
  scheme of the conserved {A}llen-{C}ahn type flow-coupled binary surfactant
  model</p></title>
  <aug>
    <au><snm>Yang</snm><fnm>X</fnm></au>
  </aug>
  <source>Comput. Methods Appl. Mech. Engrg.</source>
  <pubdate>2021</pubdate>
  <volume>373</volume>
  <fpage>113502</fpage>
  <url>https://doi-org.libproxy.mst.edu/10.1016/j.cma.2020.113502</url>
</bibl>

<bibl id="B27">
  <title><p>New SAV-pressure correction methods for the Navier-Stokes
  equations: Stability and error analysis</p></title>
  <aug>
    <au><snm>Li</snm><fnm>SJ</fnm></au>
    <au><snm>Liu</snm><fnm>Z</fnm></au>
  </aug>
  <source>Math. Comp.</source>
  <pubdate>2021</pubdate>
</bibl>

<bibl id="B28">
  <title><p>S{AV} decoupled ensemble algorithms for fast computation of
  {S}tokes-{D}arcy flow ensembles</p></title>
  <aug>
    <au><snm>Jiang</snm><fnm>N</fnm></au>
    <au><snm>Yang</snm><fnm>H</fnm></au>
  </aug>
  <source>Comput. Methods Appl. Mech. Engrg.</source>
  <pubdate>2021</pubdate>
  <volume>387</volume>
  <fpage>PaperNo.114150,34</fpage>
  <url>https://doi.org/10.1016/j.cma.2021.114150</url>
</bibl>

<bibl id="B29">
  <title><p>The stabilized extrapolated trapezoidal finite-element method for
  the {N}avier-{S}tokes equations</p></title>
  <aug>
    <au><snm>Labovsky</snm><fnm>A</fnm></au>
    <au><snm>Layton</snm><fnm>WJ</fnm></au>
    <au><snm>Manica</snm><fnm>CC</fnm></au>
    <au><snm>Neda</snm><fnm>M</fnm></au>
    <au><snm>Rebholz</snm><fnm>LG</fnm></au>
  </aug>
  <source>Comput. Methods Appl. Mech. Engrg.</source>
  <pubdate>2009</pubdate>
  <volume>198</volume>
  <issue>9-12</issue>
  <fpage>958</fpage>
  <lpage>-974</lpage>
  <url>http://dx.doi.org/10.1016/j.cma.2008.11.004</url>
</bibl>

<bibl id="B30">
  <title><p>Stability and {E}rror {A}nalysis of {IMEX} {SAV} {S}chemes for the
  {M}agneto-{H}ydrodynamic {E}quations</p></title>
  <aug>
    <au><snm>Li</snm><fnm>X</fnm></au>
    <au><snm>Wang</snm><fnm>W</fnm></au>
    <au><snm>Shen</snm><fnm>J</fnm></au>
  </aug>
  <source>SIAM J. Numer. Anal.</source>
  <pubdate>2022</pubdate>
  <volume>60</volume>
  <issue>3</issue>
  <fpage>1026</fpage>
  <lpage>-1054</lpage>
  <url>https://doi-org.libproxy.mst.edu/10.1137/21M1430376</url>
</bibl>

<bibl id="B31">
  <title><p>Numerical comparison of modified-energy stable {SAV}-type schemes
  and classical {BDF} methods on benchmark problems for the functionalized
  {C}ahn-{H}illiard equation</p></title>
  <aug>
    <au><snm>Zhang</snm><fnm>C</fnm></au>
    <au><snm>Ouyang</snm><fnm>J</fnm></au>
    <au><snm>Wang</snm><fnm>C</fnm></au>
    <au><snm>Wise</snm><fnm>SM</fnm></au>
  </aug>
  <source>J. Comput. Phys.</source>
  <pubdate>2020</pubdate>
  <volume>423</volume>
  <fpage>109772,35</fpage>
  <url>https://doi-org.libproxy.mst.edu/10.1016/j.jcp.2020.109772</url>
</bibl>

<bibl id="B32">
  <title><p>Improving the accuracy and consistency of the scalar auxiliary
  variable ({SAV}) method with relaxation</p></title>
  <aug>
    <au><snm>Jiang</snm><fnm>M</fnm></au>
    <au><snm>Zhang</snm><fnm>Z</fnm></au>
    <au><snm>Zhao</snm><fnm>J</fnm></au>
  </aug>
  <source>J. Comput. Phys.</source>
  <pubdate>2022</pubdate>
  <volume>456</volume>
  <fpage>PaperNo.110954</fpage>
  <url>https://doi-org.libproxy.mst.edu/10.1016/j.jcp.2022.110954</url>
</bibl>

<bibl id="B33">
  <title><p>The {N}avier-{S}tokes-alpha model of fluid turbulence</p></title>
  <aug>
    <au><snm>Foias</snm><fnm>C</fnm></au>
    <au><snm>Holm</snm><fnm>DD</fnm></au>
    <au><snm>Titi</snm><fnm>ES</fnm></au>
  </aug>
  <source>Phys. D</source>
  <pubdate>2001</pubdate>
  <volume>152/153</volume>
  <fpage>505</fpage>
  <lpage>-519</lpage>
  <url>https://doi-org.libproxy.mst.edu/10.1016/S0167-2789(01)00191-9</url>
  <note>Advances in nonlinear mathematics and science</note>
</bibl>

<bibl id="B34">
  <title><p>Direct numerical simulations of the {N}avier-{S}tokes alpha
  model</p></title>
  <aug>
    <au><snm>Chen</snm><fnm>S</fnm></au>
    <au><snm>Holm</snm><fnm>DD</fnm></au>
    <au><snm>Margolin</snm><fnm>LG</fnm></au>
    <au><snm>Zhang</snm><fnm>R</fnm></au>
  </aug>
  <source>Phys. D</source>
  <pubdate>1999</pubdate>
  <volume>133</volume>
  <issue>1-4</issue>
  <fpage>66</fpage>
  <lpage>-83</lpage>
  <url>https://doi-org.libproxy.mst.edu/10.1016/S0167-2789(99)00099-8</url>
  <note>Predictability: quantifying uncertainty in models of complex phenomena
  (Los Alamos, NM, 1998)</note>
</bibl>

<bibl id="B35">
  <title><p>Finite Element Methods for Viscous Incompressible Flows</p></title>
  <aug>
    <au><snm>Gunzburger</snm><fnm>MD</fnm></au>
  </aug>
  <publisher>San Diego: Academic Press</publisher>
  <series><title><p>Computer Science and Scientific
  Computing</p></title></series>
  <pubdate>1989</pubdate>
  <url>https://www.sciencedirect.com/science/article/pii/B9780123073501500028</url>
</bibl>

<bibl id="B36">
  <title><p>The Mathematical Theory of Finite Element Methods</p></title>
  <aug>
    <au><snm>Brenner</snm><fnm>S.</fnm></au>
    <au><snm>Scott</snm><fnm>R.</fnm></au>
  </aug>
  <publisher>Springer New York</publisher>
  <series><title><p>Texts in Applied Mathematics</p></title></series>
  <pubdate>2007</pubdate>
  <url>https://books.google.com/books?id=ci4c\_R0WKYYC</url>
</bibl>

<bibl id="B37">
  <title><p>Inertial Ranges in Two‐Dimensional Turbulence</p></title>
  <aug>
    <au><snm>Kraichnan</snm><fnm>RH</fnm></au>
  </aug>
  <source>The Physics of Fluids</source>
  <pubdate>1967</pubdate>
  <volume>10</volume>
  <issue>7</issue>
  <fpage>1417</fpage>
  <lpage>1423</lpage>
  <url>https://aip.scitation.org/doi/abs/10.1063/1.1762301</url>
</bibl>

<bibl id="B38">
  <title><p>The FEniCS project version 1.5</p></title>
  <aug>
    <au><snm>Alnæs</snm><fnm>M</fnm></au>
    <au><snm>Blechta</snm><fnm>J</fnm></au>
    <au><snm>Hake</snm><fnm>J</fnm></au>
    <au><snm>Johansson</snm><fnm>A</fnm></au>
    <au><snm>Kehlet</snm><fnm>B</fnm></au>
    <au><snm>Logg</snm><fnm>A</fnm></au>
    <au><snm>Richardson</snm><fnm>C</fnm></au>
    <au><snm>Ring</snm><fnm>J</fnm></au>
    <au><snm>Rognes</snm><fnm>M</fnm></au>
    <au><snm>Wells</snm><fnm>G</fnm></au>
  </aug>
  <pubdate>2015</pubdate>
  <volume>3</volume>
</bibl>

<bibl id="B39">
  <title><p>Fully decoupled, linear and unconditionally energy stable time
  discretization scheme for solving the magneto-hydrodynamic
  equations</p></title>
  <aug>
    <au><snm>Zhang</snm><fnm>G</fnm></au>
    <au><snm>He</snm><fnm>X</fnm></au>
    <au><snm>Yang</snm><fnm>X</fnm></au>
  </aug>
  <source>Journal of Computational and Applied Mathematics</source>
  <pubdate>2019</pubdate>
  <volume>369</volume>
  <fpage>112636</fpage>
</bibl>

</refgrp>
} 


\begin{thebibliography}{99}
	
	
	
	\bibitem{Accretion} \textsc{P. Armitage}, \emph{Turbulence and angular momentum transport in a global accretion disk simulation}, The Astrophysical Journal, 501 (1998), L189-L192.
	
	\bibitem{BS08} \textsc{S. Brenner and R. Scott}, \emph{The Mathematical
		Theory of Finite Element Methods}, Springer, 3rd edition, 2008.
	
	\bibitem{Connors18}\textsc{J. Connors},
	\emph{An ensemble-based conventional turbulence model for fluid-fluid interactions},
	Int. J. Numer. Anal. Model., 15 (2018), 492-519.
	
	\bibitem{Davidson01} \textsc{P. A. Davidson}, \emph{An Introduction to Magnetohydrodynamics}, Cambridge Texts in Applied Mathematics, Cambridge University Press, Cambridge, 2001.
	
	\bibitem{damping} \textsc{P. A. Davidson}, \emph{Magnetic damping of jets and vortices}, Journal of Fluid Mechanics, 299 (1995), 153-186.
	
	
	
	\bibitem{Fiordilino18a} \textsc{J. Fiordilino}, \emph{A second order ensemble timestepping algorithm for natural convection}, SIAM Journal on Numerical Analysis, 56 (2018), 816-837.
	
	\bibitem{FOP95} \textsc{Y. T. Feng, D. R. J. Owen and D. Peric}, \emph{A
		block Conjugate Gradient method applied to linear systems with multiple
		right hand sides}, Comp. Meth. Appl. Mech., 127 (1995), 1-4.
	
	\bibitem {GR79}\textsc{V. Girault and P.-A. Raviart}, \emph{Finite Element
		Approximation of the Navier-Stokes Equations}, Lecture Notes in Mathematics, Vol. 749, Springer, Berlin, 1979.
	
	\bibitem {GR86}\textsc{V. Girault and P.-A. Raviart}, {\emph Finite Element Methods for Navier-Stokes Equations - Theory and Algorithms}, Springer, Berlin, 1986.
	
	
	\bibitem{Gunzburger91} \textsc{M. Gunzburger}, \emph{On the existence, uniqueness, and finite element approximation of solutions of the equations of stationary, incompressible magnetohydrodynamics}, Mathematics of Computation, 56 (1991), 523-563.
	
	\bibitem{G89} \textsc{M.D. Gunzburger}, \emph{Finite Element Methods for
		Viscous Incompressible Flows - A Guide to Theory, Practices, and Algorithms}%
	, Academic Press, (1989).
	
	\bibitem{GIS20} \textsc{M. Gunzburger, T. Iliescu and M. Schneier}, \emph{A Leray regularized ensemble-proper orthogonal decomposition method for parameterized convection-dominated flows}, IMA Journal of Numerical Analysis, In Press, 2020.
	
	
	\bibitem{GJS17} \textsc{M. Gunzburger, N. Jiang and M. Schneier}, \emph{An ensemble-proper orthogonal decomposition method for the nonstationary Navier-Stokes equations}, SIAM Journal on Numerical Analysis, 55 (2017), 286-304.
	
	\bibitem{GJS18} \textsc{M. Gunzburger, N. Jiang and M. Schneier}, \emph{A higher-order ensemble/proper orthogonal decomposition method for the nonstationary Navier-Stokes equations}, International Journal of Numerical Analysis and Modeling, 15 (2018), 608-627.
	
	\bibitem{GJW19a} \textsc{M. Gunzburger, N. Jiang and Z. Wang}, \emph{An efficient algorithm for simulating ensembles of parameterized flow problems}, IMA Journal of Numerical Analysis, 39 (2019), 1180-1205.
	
	\bibitem{GJW19b} \textsc{M. Gunzburger, N. Jiang and Z. Wang}, \emph{A second-order time-stepping scheme for simulating ensembles of parameterized flow problems}, Computational Methods in Applied Mathematics, 19 (2019), 681-701.
	
	\bibitem{GS96} \textsc{E. Gallopulos and V. Simoncini}, \emph{Convergence of BLOCK GMRES and matrix polynomials}, Lin. Alg. Appl., 247 (1996), 97-119.
	
	\bibitem{HJQ20} \textsc{X. He, N. Jiang and C. Qiu},
	\emph{An artificial compressibility ensemble algorithm for a stochastic Stokes-Darcy model with random hydraulic conductivity and interface conditions}, International Journal for Numerical Methods in Engineering, accepted, 2020.
	
	\bibitem{HR90} \textsc{J.G. Heywood and R. Rannacher}, \emph{ Finite element approximation of the nonstationary Navier-Stokes problem. part iv: Error analysis for second-order time discretization}, SIAM J. Numer. Anal., 27 (1990), 353-384.
	
	
	
	\bibitem{JL14} \textsc{N. Jiang and W. Layton}, \emph{An algorithm for fast
		calculation of flow ensembles}, International Journal for Uncertainty
	Quantification, 4 (2014), 273-301.
	
	\bibitem{J15} \textsc{N. Jiang}, \emph{A higher order ensemble simulation
		algorithm for fluid flows}, Journal of Scientific Computing, 64 (2015), 264-288.
	
	\bibitem{J17} \textsc{N. Jiang}, 
	\emph{A second-order ensemble method based on a blended backward differentiation formula timestepping scheme for time-dependent Navier-Stokes equations}, 
	Numerical Methods for Partial Differential Equations, 33 (2017), 34-61.
	
	\bibitem{Jiang19} \textsc{N. Jiang},
	\emph{A pressure-correction ensemble scheme for computing evolutionary Boussinesq equations}, Journal of Scientific Computing, 80 (2019), 315-350.
	
	
	\bibitem{JL15} \textsc{N. Jiang and W. Layton}, \emph{Numerical analysis of
		two ensemble eddy viscosity numerical regularizations of fluid motion},
	Numerical Methods for Partial Differential Equations, 31 (2015), 630-651.
	
	\bibitem{JKL15} \textsc{N. Jiang, S. Kaya, and W. Layton}, 
	\emph{Analysis of model variance for ensemble based turbulence modeling}, Computational Methods in Applied Mathematics, 15 (2015), 173-188.
	
	\bibitem{JQ19} \textsc{N. Jiang and C. Qiu},
	\emph{An efficient ensemble algorithm for numerical approximation of stochastic Stokes-Darcy equations},
	Computer Methods in Applied Mechanics and Engineering, 343 (2019), 249-275.
	
	\bibitem{JS18} \textsc{N. Jiang, and M. Schneier},
	\emph{An efficient, partitioned ensemble algorithm for simulating ensembles of evolutionary MHD flows at low magnetic Reynolds number}, Numerical Methods for Partial Differential Equations, 34 (2018), 2129-2152.
	
	
	
	\bibitem{geodynamo} \textsc{M. Kono and P. Roberts}, 
	\emph{Recent geodynamo simulations and observations of the geomagnetic field}, Reviews of Geophysics, 40 (2002), 4-1-4-53.
	
	
	\bibitem {Layton08}\textsc{W. Layton}, \emph{Introduction to the Numerical Analysis of Incompressible Viscous Flows}, Society for Industrial and Applied Mathematics (SIAM), Philadelphia, 2008.
	
	\bibitem{LFF19} \textsc{N. Li, J. Fiordilino and X. Feng},
	\emph{Ensemble time-stepping algorithm for the convection-diffusion equation with random diffusivity}, Journal of Scientific Computing, 79 (2019), 1271-1293.
	
	\bibitem{LTT13} \textsc{W. Layton, H. Tran, and C. Trenchea,} \emph{Stability of partitioned methods for magnetohydrodynamics flows
		at small magnetic Reynolds numbers}, Contemp Math 586 (2013), 231-238.
	
	\bibitem{LTT14} \textsc{W. Layton, H. Tran and C. Trenchea}, 
	\emph{Numerical analysis of two partitioned methods for uncoupling evolutionary MHD flows}, Numerical Methods for Partial Differential Equations, 30 (2014), 1083-1102.
	
	\bibitem{LW18}\textsc{Y. Luo and Z. Wang},
	\emph{An ensemble algorithm for numerical solutions to deterministic and random parabolic PDEs}, SIAM Journal on Numerical Analysis, 56 (2018), 859-876.
	
	\bibitem{LW19}\textsc{Y. Luo and Z. Wang},
	\emph{A multilevel Monte Carlo ensemble scheme for random parabolic PDEs}, SIAM Journal on Scientific Computing, 41 (2019), A622-A642.
	
	
	
	\bibitem{MR17} \textsc{M. Mohebujjaman and L. Rebholz}, \emph{An efficient algorithm for computation of MHD flow ensembles}, Computational Methods in Applied Mathematics, 17 (2017), 121-137.
	
	\bibitem{MUMPS1} \textsc{P. R. Amestoy, I. S. Duff, J. Koster and J.-Y. L'Excellent}, \emph{A fully asynchronous multifrontal solver using distributed dynamic scheduling, SIAM Journal of Matrix Analysis and Applications}, Vol 23, No 1, pp 15-41 (2001)
	
	\bibitem{MUMPS2} \textsc{P. R. Amestoy, A. Guermouche, J.-Y. L'Excellent and S. Pralet}, \emph{Hybrid scheduling for the parallel solution of linear systems}, Parallel Computing Vol 32 (2), pp 136-156 (2006).
	
	
	\bibitem{Peterson88} \textsc{J. Peterson}, \emph{On the finite element approximation of incompressible flows of an electrically conducting fluid},
	Numerical Methods for Partial Differential Equations, 4 (1988), 57-68.
	
	\bibitem{plasma} \textsc{F. Troyon, R. Gruber, H. Saurenmann, S. Semenzato and S. Succi}, \emph{MHD-limits to plasma confinement}, Plasma Physics and Controlled Fusion, 26 (1984), 209-215.
	
	\bibitem{TNW16} \textsc{A. Takhirov, M. Neda and Jiajia Waters}, \emph{Time relaxation algorithm for flow ensembles}, Numerical Methods for Partial Differential Equations, 32 (2016), 757-777.
	
	\bibitem{Ingram13} \textsc{G. Yuksel and R. Ingram}, \emph{Numerical analysis of a finite element, Crank-Nicolson discretization for MHD flows at small magnetic Reynolds numbers}, International Journal of Numerical Analysis and Modeling, 10 (2013), 74-98.
	
\end{thebibliography}
\bibliographystyle{bmc-mathphys.bst}

\end{document}